\documentclass[noccpublish]{cc}

\usepackage[german, english]{babel}
\usepackage{pst-all}
\usepackage[labelsep=none]{caption}
\newcommand{\prip}{Q}
\newcommand{\priP}{S}
\newcommand{\priq}{R}
\newcommand{\et}{\tilde{e}}
\newcommand{\lcm}{\mbox{lcm}}
\newcommand{\Pinf}{P_\infty}
\newcommand{\priPinf}{\priP_\infty}
\newcommand{\pripinf}{\prip_\infty}
\newcommand{\priqinf}{\priq_\infty}
\newcommand{\ff}{F}
\newcommand{\fmt}{\varphi}

\title{Decomposition of Polynomials}

\author{Raoul Blankertz\\
    b-it cosec, University of Bonn\\
    Dahlmannstr. 2\\
    D-53113 Bonn \\
    Germany\\
    \url{blankertz@bit.uni-bonn.de}
    }

\begin{abstract}
This diploma thesis is concerned with functional decomposition $f = g \circ h$ of polynomials. First an algorithm is described which computes decompositions in polynomial time. This algorithm was originally proposed by Zippel (1991). A bound for the number of minimal collisions is derived. Finally a proof of a conjecture in von zur Gathen, Giesbrecht \& Ziegler (2010) is given, which states a classification for a special class of decomposable polynomials.
\end{abstract}

\begin{document}

\begin{note*}
This is a modified version of the author's diploma thesis. The main changes concern notation and rephrasing of some results.
\end{note*}

\newpage
\thispagestyle{empty}
\mbox{}
\newpage
\thispagestyle{empty}
\section*{Danksagung}

Ich m\"ochte mich an dieser Stelle bei den Personen bedanken, die mir bei der Erarbeitung dieser Diplomarbeit geholfen haben. 

Mein Dank gilt Herrn Prof.~Dr.~Joachim von zur Gathen f\"ur die Betreuung dieser Diplomarbeit. Besonders zu sch\"atzen wei\ss\ ich seine kritischen Anmerkungen und die anregenden Gespr\"ache mit ihm.
Au\ss erdem bedanke ich mich bei Herrn Konstantin Ziegler f\"ur seine Betreuung und f\"ur die  
zahl\-reichen fachlichen Diskussionen, Ratschl\"age und Hilfestellungen.
Bei Herrn Prof.~Dr.~Nitin Saxena bedanke ich mich f\"ur die \"Ubernahme der Zweitkorrektur.

F\"ur die sprachliche Durchsicht bedanke ich mich bei David Steimle und Jens Humrich. 

Nicht zuletzt m\"ochte ich mich bei meiner Familie bedanken. Bei meinen Eltern und Gro\ss eltern bedanke ich mich f\"ur die Unterst\"utzung w\"ahrend des Studiums. Ganz besonders bedanke ich mich bei meiner Freundin Ellen f\"ur ihre Geduld und ihren Beistand. 

\newpage
\thispagestyle{empty}
\mbox{}
\newpage
\thispagestyle{empty}
\tableofcontents
\newpage
\thispagestyle{empty}
\mbox{}
\newpage

\setcounter{page}{1}
\section{Introduction}
\label{sec:intro}

\subsection{Deutsche Einleitung}
Die Komposition zweier Polynome ist ebenfalls ein Polynom. Umgekehrt kann man sich fragen, unter welchen Umst\"anden ein Polynom die Komposition zweier anderer Polynome ist -- oder in anderen Worten: Wann ist ein Polynom funktional zerlegbar? \cite{rit22} besch\"aftigte sich mit dieser Frage, wobei er als Grundk\"orper $\mathbb{C}$ voraussetzte.
Im Gegensatz zur multiplikativen Zerlegung ist die funktionale Zerlegung nicht eindeutig. Ritts erstes Theorem besagt, dass die Grade der Komponenten zweier verschiedener vollst\"andigen Zerlegungen (das hei\ss t die Komponenten sind unzerlegbar) modulo einer Permutation gleich sind. Ritts zweites Theorem gibt eine Klassifikation der L\"osungen von $g_1 \circ h_1 = g_2 \circ h_2$ mit $\deg(g_1) = \deg(h_2)$ an. Diese beiden Theoreme konnten auf K\"orper der Charakteristik Null verallgemei\-nert werden (siehe \cite{dorwha74}). Aller\-dings gibt es Gegenbeispiele f\"ur 
beide Theoreme \"uber K\"orper mit positiver Charakteristik. In solchen K\"orpern treten so genannte gleichgradige Kollisionen auf. Diese Kollisionen machen es schwer, die Anzahl der zerlegbaren Polynome \"uber einem endlichen K\"orper anzugeben oder auch nur zu sch\"atzen, siehe \cite{gat09b}. Auch algorithmisch ist das Problem im Falle positiver Charakteristik schwieriger, siehe \cite{gat90c} und \cite{gat90d}. 

Zuerst werden in Kapitel \bare\ref{sec:dec} grundlegende Konzepte eingef\"uhrt. In Kapitel \bare\ref{sec:algo_minDec} wird dann ein Algorithmus von \cite{zip91} besprochen. Der von Zippel vorgestellte Algorithmus berechnet Zerlegungen von rationalen Funktionen in polynomieller Zeit. Dabei zitiert Zippel einige Resultate aus \cite{lanmil85}, auf denen sein Algorithmus basiert. Diese Resultate wurden aller\-dings dort nur f\"ur den Grundk\"orper $\mathbb{Q}$ bewiesen und nicht wie ben\"otigt \"uber einem Funktionenk\"orper $\ff (t)$ von beliebiger Charakteristik. Eine komplette Beschreibung dieses Algorithmus f\"ur die Zerlegung von Polynomen und ein Beweis seiner Korrektheit wird in dieser Arbeit gegeben. Dabei wird zun\"achst eine Beziehung zwischen der funktionalen Zerlegung eines Polynoms und den Bl\"ocken einer bestimmten Permutationsgruppe hergestellt. Dann wird gezeigt wie minimale Bl\"ocke effizient berechnet werden k\"onnen und wie man daraus die entsprechende Zerlegung gewinnt. Am Ende von Kapitel \bare\ref{sec:algo_minDec} wird eine obere Schranke f\"ur die Anzahl von minimalen Zerlegungen eines Polynoms hergeleitet.

Danach widmet sich Kapitel \bare\ref{sec:conj} der Klassifikation von Polynomen vom Grad $p^2$ mit mindestens zwei verschiedenen Zerlegungen \"uber einem K\"orper der Charakteristik $p$. Diese Klassifikation wurde in \cite*{gatgie10} vorgeschlagen und soll nun in dieser Arbeit bewiesen werden. Der Beweis orientiert sich an den Beweisen von Ritts zweitem Theorem in den Arbeiten von \cite{dorwha74} und \cite{zan93}. 

\subsection{English introduction}
The functional composition of two polynomials is a polynomial itself. Conversely one could ask, when is a given polynomial the composition of two others polynomials -- or in other words: When is a polynomial functionally decomposable? 

At first in \ref{sec:dec} basic notions and concepts will be introduced. An algorithm for computing decompositions, which was originally proposed in \cite{zip91} \footnote{There is a related (unpublished) paper by Zippel from 1996, which was not known to the author until the submission of the thesis. A subsequent publication of the author's work will refer to it.}, will be discussed in \ref{sec:algo_minDec}. 
The proof relies on a generalization of results of \cite{lanmil85}. But this generalization lacked a proper foundation. A proof of correctness and a runtime estimation is provided (20 years later) in this paper.
In the end of \ref{sec:algo_minDec} an upper bound for the number of minimal decompositions of a polynomial will be deduced. 

In \cite*{gatgie10} a classification for decomposable polynomials of degree $p^2$ over a field of characteristic $p$ was proposed. This conjecture will be stated and proven in \ref{sec:conj}.

\section{Decompositions}
\label{sec:dec}
Let $\ff$ be an arbitrary field. In the runtime considerations of the algorithm in \ref{sec:algo_minDec} we restrict $\ff$ to a field in which one can compute efficiently and in \ref{sec:conj} we restrict $\ff$ to a field of positive characteristic. One can think of $\ff$ being a finite field, which is the most interesting case.

\begin{definition}
A polynomial $f$ in $\ff[x]$ is \emph{decomposable} if there are $g$ and $h$ in $\ff[x]$, both of degrees at least two, such that $f = g \circ h$. The pair $(g,h)$ is called a \emph{decomposition} of $f$. A polynomial is \emph{indecomposable} if it is not decomposable.

We call a polynomial \emph{original} if its graph passes though the origin, or, equivalently, it has a root at zero.
A polynomial is \emph{normal} if it is monic and original. We call a decomposition $(g,h)$ \emph{normal} if $g$ and $h$ are normal and we call it \emph{minimal} if $h$ is normal and indecomposable.  
\end{definition}

In a decomposition $(g, h)$, $g$ is uniquely determined by $f$ and $h$, since the ring homomorphism $\ff[x] \rightarrow \ff[x]$ with $ g \mapsto g \circ h$ is injective. Furthermore, $g$ is easy to compute by the generalized Taylor expansion, see \cite{gatger99}.

\begin{definition}
\label{def:lcomp} 
A linear left composition (linear right composition, linear composition) of $f$ is the polynomial $\ell \circ f$ ($f \circ \ell$, $\ell \circ f \circ \hat{\ell}$, respectively) for some linear polynomials $\ell$ and $\hat{\ell}$. 

The \emph{conjugate} of a normal polynomial $f$ by a linear polynomial $x + w$ is the normal polynomial $(x - f(w) ) \circ f \circ (x + w)$.

\end{definition}

Each non-constant polynomial has a unique linear left composition which is normal. Namely, if $a$ is the leading coefficient of $f$ and $b$ is its constant term, then $(a^{-1} x -a^{-1}b) \circ f$ is normal. The functional inverse of a linear polynomial $\ell=ax +b$ is $\ell^{-1} = a^{-1}x - a^{-1}b$. If a normal polynomial $f$ has a decomposition $(g, h)$ and $l$ is a linear polynomial such that $\ell \circ h$ is normal, then $g \circ \ell^{-1}$ is normal. This is because the leading coefficient and constant term of $(g \circ \ell^{-1}) \circ (\ell \circ h) = f$ equal the leading coefficient and the constant term, respectively, of $g \circ \ell^{-1}$. 

Functional decomposition is related to intermediate fields of certain field extensions in the following way. Let $\ff (t)$ be the function field in $t$ over $\ff$. Then for a given non-constant polynomial $f \in \ff[x]$ let $\fmt$ be the polynomial $f - t$ in $\ff (t)[x]$. Then $\fmt$ is irreducible by the Eisenstein criterion.

If we assume that the derivative $f'$ of $f$ is not zero then the derivative of $\fmt$ with respect to $x$ is not zero and thus $\fmt$ is separable. In this case, for a root $\alpha$ of $\fmt$, $\ff(t)[\alpha] = \ff(\alpha)$ is a separable field extension of $\ff(t)$.

In characteristic $0$ we have $f' \neq 0$.
If the characteristic of $\ff$ is $p$ and $f' = 0$ then there exists $\tilde{f}$ such that $f = \tilde{f}(x^{p^r} )$ and $\tilde{f}' \neq 0 $. If $\ff$ is finite or is the algebraic closure of a finite field then the Frobenius endomorphism $x \mapsto x^p$ is an automorphism of $\ff$. In this case, by knowing all decompositions of $\tilde{f}$ one knows all decompositions of $f$. 
In general the Frobenius endomorphism is not an automorphism (for example on function fields), but we will anyway assume that $f' \neq 0$. This assumption excludes some cases in general, but we lose no generality if $\ff$ is a finite field.

Now the following theorem states a correspondence between decompositions of $f$ and intermediate fields of $\ff(\alpha) \mid \ff(t)$. A proof of it can be found in  \cite{frimac69}.

\begin{theorem}
\label{thm:bij}
Let $f$ be a polynomial over $\ff$ with $f' \neq 0$ and let $\alpha$ be a root of $f - t \in \ff(t)[x]$. 
Let $L = \{ h \in \ff[x] \colon h \mbox{ is normal and } \exists g \in \ff[x] \colon f = g \circ h \}$ and let $M$ be the set of intermediate fields between $\ff(\alpha)$ and $\ff(t)$. Then the map $L \rightarrow M$ with $h \mapsto \ff(h(\alpha))$ is bijective.
\end{theorem}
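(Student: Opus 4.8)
The plan is to establish the bijection by constructing the map explicitly, exhibiting an inverse, and checking both composites are the identity. I would begin by verifying that the map $h \mapsto \ff(h(\alpha))$ is well-defined: for $h \in L$ with $f = g \circ h$, the element $h(\alpha)$ satisfies $g(h(\alpha)) = f(\alpha) = t$, so $\ff(h(\alpha))$ contains $t$, hence $\ff(t) \subseteq \ff(h(\alpha)) \subseteq \ff(\alpha)$ and $\ff(h(\alpha))$ is a genuine intermediate field. The key numerical fact to extract here is the degree count: since $\fmt$ is irreducible of degree $n = \deg f$, we have $[\ff(\alpha):\ff(t)] = n$, and I would show $[\ff(\alpha):\ff(h(\alpha))] = \deg h$ while $[\ff(h(\alpha)):\ff(t)] = \deg g$, so that the tower degrees multiply correctly to $n = \deg g \cdot \deg h$.

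For injectivity, suppose $h_1, h_2 \in L$ give the same field $\ff(h_1(\alpha)) = \ff(h_2(\alpha)) =: E$. I would argue that a normal polynomial $h$ with $h(\alpha)$ generating $E$ is uniquely determined, essentially because $h(\alpha)$ is, up to a linear transformation over $\ff$, a canonical generator, and normality (monic, original) pins down that linear ambiguity. Concretely, both $h_1(\alpha)$ and $h_2(\alpha)$ lie in $E$ and each has degree $\deg h_i = [\ff(\alpha):E]$ as a polynomial expression in $\alpha$; comparing them and using that normal polynomials of the same degree generating the same subfield must coincide yields $h_1 = h_2$.

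The surjectivity direction is where the real content lies, and I expect it to be the main obstacle. Given an arbitrary intermediate field $E$ with $\ff(t) \subseteq E \subseteq \ff(\alpha)$, I must produce a normal $h \in \ff[x]$ and some $g$ with $f = g \circ h$ and $\ff(h(\alpha)) = E$. The natural strategy is to use the theorem of the primitive element: since the extension is separable, $E = \ff(\beta)$ for some $\beta \in \ff(\alpha)$. Because $\ff(\alpha) = \ff(t)[\alpha]$ and $\alpha$ is integral-like over the relevant rings, one expects $\beta$ to be expressible as a polynomial in $\alpha$ over $\ff$, say $\beta = h(\alpha)$ for some $h \in \ff[x]$; after a normalizing linear left composition I may assume $h$ is normal. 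The crucial step is then to show that $t \in \ff(h(\alpha))$ forces $t = g(h(\alpha))$ for a genuine \emph{polynomial} $g \in \ff[x]$, i.e.\ that $t = f(\alpha)$ being a polynomial in $\beta = h(\alpha)$ descends to a polynomial identity $f = g \circ h$ in $\ff[x]$. This rationality-to-polynomiality passage, together with ensuring $\beta$ can be chosen as a polynomial in $\alpha$ rather than a rational function, is the delicate part; I would handle it by exploiting that $\alpha$ is transcendental over $\ff$ with $f(\alpha) = t$, so $\ff[\alpha]$ is a polynomial ring and membership statements about $t$ translate into polynomial divisibility and substitution identities. Since the paper cites \cite{frimac69} for the proof, I anticipate that the careful verification of these polynomial-versus-rational subtleties is exactly what that reference supplies.
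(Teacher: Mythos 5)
The paper gives no proof of this theorem at all---it simply cites \cite{frimac69}---so your proposal has to be judged on its own merits. Your well-definedness check and the degree counts are fine, and your injectivity argument is essentially right, though it needs one standard step spelled out: two generators of the same subfield of the rational function field $\ff(\alpha)$ differ by a M\"obius transformation, and when both generators are \emph{polynomials} in $\alpha$ the transformation must be a linear polynomial (a nontrivial denominator would give the second generator a finite pole); monicity and originality then remove the remaining affine ambiguity. That is exactly the "linear ambiguity" you invoke, but it is the part that has to be argued, not assumed.

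The genuine gap is in surjectivity, precisely at the point you label "the delicate part." The primitive element theorem (or L\"uroth) gives $E = \ff(\beta)$ with $\beta$ a priori only a \emph{rational} function in $\alpha$, and the mechanism you propose---translating "$t \in \ff(\beta)$" into divisibility and substitution identities in the polynomial ring $\ff[\alpha]$---does not force $\beta$ to be a polynomial in $\alpha$, nor $t$ to be a polynomial in $\beta$: membership in $\ff(\beta)$ only expresses $t$ as a rational function of $\beta$. The missing idea is the pole analysis at infinity. Since $f$ is a polynomial, $t=f(\alpha)$ has its unique pole at the infinite place of $\ff(\alpha)$, so the infinite place of $\ff(t)$ is totally ramified in $\ff(\alpha)$; consequently there is a unique place $Q_\infty$ of $E$ lying between the two infinite places. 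Normalizing the L\"uroth generator of $E$ by a M\"obius transformation so that its unique pole is $Q_\infty$ produces an element of $\ff(\alpha)$ whose only pole is the infinite place, hence a polynomial $h(\alpha)$ with $h \in \ff[x]$ (which one then makes normal by a linear left composition); the same pole count inside $E=\ff(h(\alpha))$ shows that $t$ has its only pole at the infinite place of $E$, hence $t=g(h(\alpha))$ for a polynomial $g$, and $f = g \circ h$ follows from the transcendence of $\alpha$. This ramification argument is exactly what \cite{frimac69} supplies (the paper itself quotes the total-ramification-at-infinity statement from that reference later on); without it your outline does not close.
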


The minimal polynomial of $\alpha$ over $\ff(h(\alpha))$ is $h(x) - h(\alpha)$. Thus we have $[\ff(\alpha) \colon \ff(h(\alpha)) ] = \deg (h)$.

The set $M$ is a lattice with the inclusion as order (the intersection and the composition of fields are the meet and the join). 
It is clear that if $h = g \circ h^*$, then $\ff(h(\alpha)) \subseteq \ff(h^*(\alpha))$. Thus, if we take $h^* \leq h $ to mean that $h = g \circ h^*$ for some $g \in \ff[x]$, then the bijection in \ref{thm:bij} is an order-reversing bijection of partially ordered sets. Thus $(L, \leq)$ is a lattice, which we call the \emph{lattice of decompositions} of $f$.

\begin{lemma}
Let $f \in \ff[x]$ with $f' \neq 0$ and $\ell_1$, $\ell_2 \in \ff[x]$ be linear. Let $\alpha$ be a root of $f  -t$ and $\beta$ a root of $\ell_1 \circ f \circ \ell_2 -t$. Then the field extensions $\ff(\alpha) \mid \ff (t)$ and $\ff(\beta) \mid \ff(t)$ are isomorphic.
\end{lemma}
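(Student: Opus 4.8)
The plan is to construct an explicit field isomorphism $\ff(\alpha) \to \ff(\beta)$ that carries the base field $\ff(t)$ onto itself, by peeling off the two linear factors one at a time and then invoking the uniqueness of the quotient-ring description of a simple algebraic extension. The key observation is that a linear right composition changes only the generator $\beta$ by an invertible substitution, while a linear left composition changes only the generator $t$ of the base field.

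First I would dispose of the right factor $\ell_2$. Writing $\gamma = \ell_2(\beta)$ and using that $\ell_2$ is an invertible linear polynomial, we have $\ff(\beta) = \ff(\gamma)$, so it suffices to compare $\ff(\alpha) \mid \ff(t)$ with $\ff(\gamma) \mid \ff(t)$. Next I would dispose of the left factor $\ell_1$: since $\beta$ is a root of $\ell_1 \circ f \circ \ell_2 - t$, we get $\ell_1(f(\gamma)) = t$, hence $f(\gamma) = \ell_1^{-1}(t) =: s$. As $\ell_1^{-1}$ is linear, $s$ is a degree-one expression in $t$, so $\ff(s) = \ff(t)$; thus $\gamma$ is a root of $f - s \in \ff(t)[x]$, and $f - s$ is its minimal polynomial.

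The heart of the argument is then the automorphism $\Phi$ of $\ff(t)$ determined by $\Phi|_\ff = \mathrm{id}$ and $\Phi(t) = s$; this is well defined and bijective precisely because $s$ generates $\ff(t)$ over $\ff$ (one recovers $t = \ell_1(s)$). Extending $\Phi$ coefficientwise to $\ff(t)[x]$ with $x \mapsto x$, and using that the coefficients of $f$ lie in $\ff$ and are therefore fixed, I obtain $\Phi(f - t) = f - s$. Hence $\Phi$ carries the ideal $(f-t)$ onto $(f-s)$ and descends to a field isomorphism
\[
\ff(t)[x]/(f-t) \;\longrightarrow\; \ff(t)[x]/(f-s).
\]
Composing with the canonical isomorphisms $\ff(t)[x]/(f-t) \cong \ff(\alpha)$ (via $x \mapsto \alpha$) and $\ff(t)[x]/(f-s) \cong \ff(\gamma) = \ff(\beta)$ (via $x \mapsto \gamma$), both valid because $f-t$ and $f-s$ are the respective minimal polynomials, yields the desired isomorphism $\ff(\alpha) \to \ff(\beta)$ sending $\alpha \mapsto \gamma$ and $t \mapsto s$.

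The routine checks are that $\ell_2$ being invertible gives $\ff(\beta) = \ff(\gamma)$ and that the degree-one change of variable makes $\ff(s) = \ff(t)$. The point that demands care, and which I regard as the crux, is the meaning of \emph{isomorphic}: the map I construct does not fix $\ff(t)$ pointwise but restricts to the automorphism $\Phi$ sending $t \mapsto s$. I would therefore phrase the conclusion as the existence of a field isomorphism $\ff(\alpha) \to \ff(\beta)$ carrying $\ff(t)$ onto $\ff(t)$, and verify that $f - s$ is irreducible over $\ff(t)$ (by the same Eisenstein argument given for $f - t$, now applied after the substitution $t \mapsto s$), so that the quotient $\ff(t)[x]/(f-s)$ is indeed a field equal to $\ff(\gamma)$.
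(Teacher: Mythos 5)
Your proof is correct and constructs exactly the same isomorphism as the paper, namely the map sending $\alpha \mapsto \ell_2(\beta)$ and $t \mapsto \ell_1^{-1}(t)$; the paper merely packages it differently, as the evaluation homomorphism $\ff[\alpha] \to \ff[\beta]$, $\alpha \mapsto \ell_2(\beta)$, which is injective because $\beta$ is transcendental, extends to the fraction fields, and is surjective by comparing degrees. Your route through the quotient rings $\ff(t)[x]/(f-t)$ and $\ff(t)[x]/(f-s)$ and the base automorphism $t \mapsto s$ is a valid reformulation of the same idea, and your care about the isomorphism carrying $\ff(t)$ onto itself rather than fixing it pointwise matches the paper's conclusion.
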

\begin{proof}
Let $\Phi \colon \ff[\alpha] \rightarrow \ff[\beta]$ with $\alpha \mapsto \ell_2(\beta)$ be the evaluation homomorphism. Since $\beta$ is transcendental over $\ff$ this homomorphism is injective and extends to a field homomorphism $\Phi \colon \ff(\alpha) \rightarrow \ff(\beta)$. From $\Phi(t) = \Phi(f(\alpha)) = f (\ell_2(\beta)) = \ell_1^{-1} (t)$ follows that $\ff(t)$ is mapped to $\ff(t)$ under $\Phi$. The degrees of the extensions are equal, hence $\Phi$ is surjective.
\end{proof}

\begin{corollary}
If $\hat{f}$ is a linear composition of $f$, then the lattice of decompositions of $\hat{f}$ is isomorphic to the lattice of decompositions of $f$.
\end{corollary}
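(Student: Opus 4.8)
The plan is to transport the order-reversing bijection of \ref{thm:bij} along the field isomorphism supplied by the preceding lemma. Write the linear composition as $\hat{f} = \ell_1 \circ f \circ \ell_2$ with $\ell_1, \ell_2 \in \ff[x]$ linear, as in \ref{def:lcomp}. Before anything else I would check that \ref{thm:bij} even applies to $\hat{f}$, i.e.\ that $\hat{f}' \neq 0$: by the chain rule $\hat{f}' = (\ell_1' \circ f \circ \ell_2) \cdot (f' \circ \ell_2) \cdot \ell_2'$, and since $\ell_1, \ell_2$ are linear their derivatives are nonzero constants, while $f' \circ \ell_2 \neq 0$ because $f' \neq 0$ and $\ell_2$ is nonconstant; hence $\hat{f}' \neq 0$.

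Next, fix a root $\alpha$ of $f - t$ and a root $\beta$ of $\hat{f} - t$. The previous lemma provides a field isomorphism $\Phi \colon \ff(\alpha) \rightarrow \ff(\beta)$, and its proof shows $\Phi(t) = \ell_1^{-1}(t) \in \ff(t)$, so $\Phi$ restricts to an automorphism of $\ff(t)$, that is, $\Phi(\ff(t)) = \ff(t)$. I would then argue that $\Phi$ induces an order-preserving bijection between the set $M$ of intermediate fields of $\ff(\alpha) \mid \ff(t)$ and the corresponding set $M'$ attached to $\hat{f}$. Indeed, for any field $K$ with $\ff(t) \subseteq K \subseteq \ff(\alpha)$ we get $\ff(t) = \Phi(\ff(t)) \subseteq \Phi(K) \subseteq \Phi(\ff(\alpha)) = \ff(\beta)$, so $\Phi(K) \in M'$; the assignment $K \mapsto \Phi(K)$ is a bijection with inverse $K' \mapsto \Phi^{-1}(K')$, and since $\Phi$ is a field isomorphism it satisfies $K_1 \subseteq K_2 \iff \Phi(K_1) \subseteq \Phi(K_2)$, so it preserves the inclusion order.

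Finally I would compose the three maps. By \ref{thm:bij} the lattice of decompositions $L$ of $f$ is in order-reversing bijection with $M$, and likewise the lattice $L'$ of decompositions of $\hat{f}$ is in order-reversing bijection with $M'$ (this is where the check $\hat{f}' \neq 0$ is used). Chaining the order-reversing bijection $L \rightarrow M$, the order-preserving bijection $\Phi \colon M \rightarrow M'$, and the order-reversing bijection $M' \rightarrow L'$ yields a bijection $L \rightarrow L'$ that is order-preserving, since reversing the order twice restores it. An order isomorphism between lattices automatically respects meets and joins, so this is the desired lattice isomorphism.

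The routine points, namely the chain-rule computation and the verification that $\Phi$ preserves inclusions, are straightforward; the only thing that really needs care is bookkeeping the orientation of the three maps so that the composite comes out order-preserving rather than order-reversing, and confirming that $\hat{f}$ satisfies the hypothesis $\hat{f}' \neq 0$ needed to invoke \ref{thm:bij} on the $\hat{f}$ side.
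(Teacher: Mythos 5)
Your proposal is correct and follows the same route as the paper: both chain the bijection of \ref{thm:bij} on each side with the field isomorphism $\Phi$ from the preceding lemma, which carries intermediate fields of $\ff(\alpha)\mid\ff(t)$ to those of $\ff(\beta)\mid\ff(t)$. Your explicit verification that $\hat{f}'\neq 0$ and your bookkeeping of the order reversals are details the paper leaves implicit, but they do not change the argument.
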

\begin{proof}
Let $\alpha$ and $\beta$ be roots of $f - t$ and $\hat{f} - t$, respectively. Then the lattice of decompositions of $\hat{f}$ is isomorphic to the lattice of intermediate fields between $\ff(\beta)$ and $\ff(t)$. By the previous lemma, this lattice is isomorphic to the lattice of intermediate fields between $\ff(\alpha)$ and $\ff(t)$, which is in turn isomorphic to the lattice of decompositions of $f$.
\end{proof}

Thus, one needs only to consider normal polynomials and normal decompositions. Furthermore the lattice of decompositions of a normal polynomial is invariant under conjugation.

\section{Finding minimal decompositions}
\label{sec:algo_minDec}

An algorithm that computes functional decompositions of rational functions was proposed in \cite{zip91}. In this paper Zippel cites results of \cite{lanmil85}, on which the algorithm relies. But these results were only proven for the ground field $\mathbb{Q}$ -- instead of $\ff(t)$, which would be needed. 
In the 20 years since then, nobody seems to have undertaken the somewhat ungrateful task of verifying whether Zippel's claims are actually true. 
A complete description of the algorithm for polynomial decomposition and a proof of its correctness will be given in this section.

The main idea for the algorithm is to relate decompositions of $f$ to certain partitions of the set of roots of $\fmt = f - t$ and to find an efficient way to compute these partitions. To specify this idea, let $(g,h)$ be a decomposition of $f$. For each root $\lambda$ of $g - t$ the roots of $h - \lambda$ form a subset of the roots of $\fmt$. Furthermore two different roots of $g- t$ yield two disjoint subsets. In this way one can partition the set of roots of $\fmt$ with respect to a decomposition of $f$. For getting a better understanding of the nature of these partitions we consider the notion of blocks.

\subsection{Blocks of imprimitivity}
\label{subsec:blocks} 
We introduce the notion of blocks of imprimitivity and its relation to decompositions. 
For this propose consider a finite permutation group $G$ on a finite set $Z$ (that is a subgroup $G \subseteq S(Z)$, where $S(Z)$ is the symmetric group on $Z$).
The following facts are mainly taken from \cite{wie64}. 

\begin{definition}
A subset $B$ of $Z$ is a \emph{block} of $G$ if for all $\sigma$ in $G$, the set $\sigma (B) \cap B$ is empty or equals $B$. 
\end{definition}

Equivalently, $B$ is a block of $G$ if for all $\sigma$ in $G$ the sets $B$ and  $\sigma(B)$ are disjoint or equal. If $B$ is a block, then any $\sigma (B)$ is a block. If $G$ is transitive and $B \neq \emptyset$ then $\{ \sigma (B) \}_{\sigma \in G}$ is a partition of $Z$ and is called a \emph{complete block system}. 

\begin{definition}
For a subgroup $U \subseteq G$ and $\alpha \in Z$ the \emph{orbit} of $\alpha$ under $U$ is the subset $U(\alpha) = \{ \sigma (\alpha) \mid \sigma \in U\}$. 
For a subset $S \subseteq Z$ the \emph{stabilizer} of $S$ is the subgroup $G_S = \{ \sigma \mid \sigma (S) = S \}$. We write $G_\alpha$ for $G_{\{\alpha\}}$.
A permutation group $G$ on $Z$ is called \emph{regular} if $G_\alpha$ is trivial for all $\alpha$ in $Z$.
\end{definition}

For $\sigma \in G$ we have $\sigma G_\alpha \sigma^{-1} = G_{\sigma (\alpha)}$. 
In particular, if $G$ is transitive, all stabilizers have the same cardinality, and $G$ is regular if and only if $G_\alpha$ is trivial for some $\alpha \in Z$.

\begin{lemma}
\label{lem:inters}
If $B$ and $C$ are blocks then $B \cap C$ is a block.
\end{lemma}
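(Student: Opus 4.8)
The plan is to fix an arbitrary $\sigma \in G$ and verify that $\sigma(B \cap C)$ is either disjoint from or equal to $B \cap C$. By the reformulation stated immediately after the definition of a block --- namely that $B$ is a block exactly when $B$ and $\sigma(B)$ are disjoint or equal for every $\sigma$ --- this is precisely what is required to conclude that $B \cap C$ is a block.

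The key structural observation I would use first is that every $\sigma \in G$ acts as a bijection on $Z$, so it commutes with intersection: $\sigma(B \cap C) = \sigma(B) \cap \sigma(C)$. With this identity in hand, the block properties of $B$ and of $C$ can be applied separately. Since $B$ is a block, for the chosen $\sigma$ we have either $\sigma(B) = B$ or $\sigma(B) \cap B = \emptyset$, and likewise either $\sigma(C) = C$ or $\sigma(C) \cap C = \emptyset$.

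I would then split into cases. If $\sigma(B) \cap B = \emptyset$, then since $\sigma(B \cap C) \cap (B \cap C) = \sigma(B) \cap \sigma(C) \cap B \cap C \subseteq \sigma(B) \cap B = \emptyset$, the two sets are disjoint; the symmetric argument handles the case $\sigma(C) \cap C = \emptyset$. In the only remaining case, $\sigma(B) = B$ and $\sigma(C) = C$, so $\sigma(B \cap C) = \sigma(B) \cap \sigma(C) = B \cap C$, and the two sets are equal. In every case $\sigma(B \cap C)$ is disjoint from or equal to $B \cap C$, as desired.

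There is essentially no hard step here; the only point deserving care is the passage from $\sigma(B) \cap B = B$ to $\sigma(B) = B$, which relies on $\sigma$ being a bijection of a finite set (so $\lvert \sigma(B) \rvert = \lvert B \rvert$). Since this is exactly the equivalent formulation of the block condition already recorded in the text, I can invoke it directly rather than reprove it.
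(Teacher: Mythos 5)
Your proof is correct and follows essentially the same route as the paper's: both use that $\sigma$ distributes over intersections to write $\sigma(B\cap C)\cap(B\cap C)=(\sigma(B)\cap B)\cap(\sigma(C)\cap C)$ and then apply the block property of $B$ and $C$ separately. The case analysis you spell out is exactly the "empty if and only if one factor is empty" dichotomy in the paper's argument.
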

\begin{proof}
Let $\sigma$ be in $G$. Then $\sigma (B \cap C) \cap (B \cap C) = ( \sigma B \cap B ) \cap (\sigma C \cap C)$ and this is empty if and only if $\sigma B \cap B$ or $\sigma C \cap C$ is empty. If both are nonempty we get $\sigma (B \cap C) \cap (B \cap C) = B \cap C$, since $B$ and $C$ are blocks.
\end{proof}

\begin{definition}
The blocks $\emptyset$, $Z$, and $\{ \gamma \}$, for $\gamma \in Z$, are called \emph{trivial blocks}. A nontrivial block is called \emph{block of imprimitivity}. A permutation group $G$ on $Z$ is called \emph{primitive} if there are only trivial blocks. It is called \emph{imprimitive} otherwise. 
\end{definition}

\begin{example}
The alternating group $\mathbb{A}_n$ on $\{1, \ldots , n\}$ is primitive: Let without lose of generality $n > 2$ and assume $B$ is a block with at least two distinct elements, say $\alpha \neq \beta \in B$. Let $\gamma$ be an arbitrary element in $\{1, \ldots , n\}$ distinct from $\alpha$ and $\beta$. Then for $\sigma = (\alpha \ \beta \ \gamma) \in \mathbb{A}_n$ we have $\sigma(\alpha) = \beta \in \sigma B \cap B$. Therefore $\sigma B = B$, and since $\sigma (\beta) = \gamma \in \sigma B$ we get $\gamma \in B$. This proves $ B = \{1, \ldots , n\}$.

The same holds for $\mathbb{S}_n$.
\end{example}

\begin{example}
Let the dihedral group $D_6 = \langle \sigma , \tau \rangle $ act on $\{1, \ldots , 6\}$ by $\sigma = (1\ 3\ 5) (2\ 4\ 6)$ and $\tau = (1\ 4) (2\ 3)(5\ 6)$. Then $D_6$ is imprimitive, for example, $\{1,3,5\}$ and $\{1,2\}$ are nontrivial blocks.
\end{example}

The following theorem is essential for the link between the decomposition of polynomials and the theory of blocks. 
\begin{theorem}
\label{thm:wiel}
Let $G$ be a finite transitive permutation group on a finite set $Z$ and let $\alpha \in Z$. Then the lattice of subgroups between $G_\alpha$ and $G$ is isomorphic to the lattice of blocks containing $\alpha$.
\end{theorem}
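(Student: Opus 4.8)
The plan is to exhibit an explicit order-preserving bijection between the two lattices and check it is well-defined in both directions. Given a block $B$ containing $\alpha$, I would associate to it its \emph{setwise stabilizer} $G_B = \{\sigma \in G \colon \sigma(B) = B\}$; conversely, given a subgroup $U$ with $G_\alpha \subseteq U \subseteq G$, I would associate the orbit $U(\alpha)$. The claim is that these two maps are mutually inverse lattice isomorphisms.

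\medskip

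First I would verify that the two maps land in the right sets. For a block $B \ni \alpha$, the stabilizer $G_B$ clearly contains $G_\alpha$ (anything fixing $\alpha$ fixes the block it generates, using that blocks are disjoint or equal and $\alpha \in \sigma(B) \cap B$ forces $\sigma(B)=B$), so $G_\alpha \subseteq G_B \subseteq G$. In the other direction, for a subgroup $U$ between $G_\alpha$ and $G$, I must show the orbit $U(\alpha)$ is a block containing $\alpha$. Transitivity of $G$ is used here: for $\sigma \in G$, either $\sigma(U(\alpha))$ and $U(\alpha)$ are disjoint or they meet in some point, and a point of intersection yields $\sigma u_1(\alpha) = u_2(\alpha)$, whence $u_2^{-1}\sigma u_1 \in G_\alpha \subseteq U$, forcing $\sigma \in U$ and hence $\sigma(U(\alpha)) = U(\alpha)$.

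\medskip

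Next I would check the two maps are mutually inverse. Starting from a subgroup $U$, the composite gives $G_{U(\alpha)}$, and I would show $G_{U(\alpha)} = U$: the inclusion $U \subseteq G_{U(\alpha)}$ is immediate, and for the reverse, if $\sigma$ stabilizes $U(\alpha)$ then $\sigma(\alpha) \in U(\alpha)$, so $\sigma(\alpha) = u(\alpha)$ for some $u \in U$, giving $u^{-1}\sigma \in G_\alpha \subseteq U$ and thus $\sigma \in U$. Starting from a block $B \ni \alpha$, the composite gives the orbit $G_B(\alpha)$, and I would show $G_B(\alpha) = B$: the inclusion $G_B(\alpha) \subseteq B$ holds because $G_B$ stabilizes $B$ and $\alpha \in B$, while for $\beta \in B$ transitivity supplies $\sigma \in G$ with $\sigma(\alpha) = \beta$, and then $\alpha \in \sigma^{-1}(B) \cap B$ forces $\sigma^{-1}(B) = B$, so $\sigma \in G_B$ and $\beta = \sigma(\alpha) \in G_B(\alpha)$.

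\medskip

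Finally I would observe that both maps are inclusion-preserving — if $B \subseteq C$ are blocks then clearly $G_B$ and $G_C$ are related by inclusion once one checks stabilizers respect containment, and orbits under larger subgroups are larger — so the bijection is an isomorphism of partially ordered sets, hence of lattices. The step I expect to be the main obstacle is the well-definedness of the orbit map, namely proving that $U(\alpha)$ is genuinely a block: this is where the hypothesis $G_\alpha \subseteq U$ and the transitivity of $G$ both enter essentially, and the disjoint-or-equal verification must be done carefully to rule out partial overlaps.
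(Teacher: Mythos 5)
Your proposal is correct and follows essentially the same route as the paper: the mutually inverse maps $B \mapsto G_B$ and $U \mapsto U(\alpha)$, the same well-definedness checks using $G_\alpha \subseteq U$ and transitivity, and the same verification that the composites are the identity. The only minor difference is that you explicitly note both maps preserve inclusion, whereas the paper checks this only for the stabilizer map; your version is if anything slightly more complete.
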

\begin{proof}
For a block $B$ with $\alpha \in B$ define $\Phi \colon B \mapsto G_B$ and for a subgroup $G_\alpha \subseteq U \subseteq G$ define $\Psi \colon U \mapsto U(\alpha)$. To prove that both maps are well defined we first show that $G_B$ contains $G_\alpha$. Let $\sigma$ be in $G_\alpha$. Then $\sigma (\alpha) = \alpha \in B$. Thus $\sigma (B) \cap B \neq \emptyset$ and therefore $B = \sigma(B)$.

To prove that $U(\alpha)$ is a block, let $\sigma \in G$ and assume $\gamma$ is in $\sigma (U(\alpha)) \cap U(\alpha)$. Then there are $\tau$ and $\tau'$ in $U$ such that $\sigma \tau' (\alpha) = \gamma = \tau(\alpha)$. Thus $\tau^{-1} \sigma \tau' (\alpha) = \alpha$ and therefore $\tau^{-1} \sigma \tau' \in G_\alpha \subseteq U$. This implies that $\sigma$ is in $U$ and we have $\sigma (U(\alpha)) = U(\alpha)$.

Clearly $\Phi \circ \Psi (U) =\{ \sigma \in G \mid \sigma (U(\alpha)) =U(\alpha) \} \supseteq U$. For the reverse inclusion consider $\sigma \in G$ such that $\sigma(U (\alpha)) = U(\alpha)$. Thus there is $\tau \in U$ such that $\sigma \tau (\alpha) = \alpha$. Then $\sigma\tau \in G_\alpha \subseteq U$ and therefore $\sigma \in U$. Thus we have proven that $\Phi\circ\Psi = id$. For the other direction, one finds that $\Psi \circ\Phi (B) = \{ \sigma (\alpha) \mid \sigma(B) = B\} \subseteq B$. Let $\beta \in B$. Since $G$ acts transitively there is $\sigma \in G$ such that $\sigma (\alpha) = \beta$. Then $\beta \in B \cap \sigma B$ and thus $\sigma (B) = B$. Therefore $\beta = \sigma(\alpha) \in \Psi \circ \Phi (B)$. Thus $ \Psi \circ \Phi = id$ and we have proven that $ \Psi$ and $\Phi$ are bijective. It is now sufficient to show that $\Phi$ is order preserving. Let $B \subseteq B'$ and $\sigma \in G_B$. Then $B = \sigma (B) \cap B \subseteq \sigma (B') \cap B'$ and thus $\sigma (B') \cap B'$ is nonempty. Hence $\sigma$ is in $G_{B'}$. 
\end{proof}

We fix the following notation. Let $f$ be a polynomial in $ \ff [x]$ of degree $n$ with $f' \neq 0$. As before define $\fmt = f - t \in \ff(t)[x]$ and let $\alpha$ be a root of $\fmt$. Furthermore let $L$ be the splitting field of $\fmt$ over $\ff(t)$ and let $G$ be its Galois group. Then $G$ acts transitively on the set $Z$ of roots of $\fmt$. We consider $G$ as permutation group on $Z$.

\begin{corollary}
\begin{enumerate}
\item The lattice of decompositions of $f$ and the lattice of blocks of $G$ containing $\alpha$ are isomorphic.
\item Let $h$ be the right component of a normal decomposition of $f$ and let $B$ be the block corresponding to $h$. Then $\deg(h) = | B |$.
\end{enumerate}
\end{corollary}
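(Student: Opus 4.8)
The plan is to prove (1) by composing three lattice isomorphisms, two of which are already available, and to deduce (2) from the orbit--stabilizer relation together with the identity $[\ff(\alpha) \colon \ff(h(\alpha))] = \deg(h)$ recorded just after Theorem \ref{thm:bij}.

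For (1), I would first use Theorem \ref{thm:bij} and the discussion following it: the lattice of decompositions of $f$ is isomorphic, via the order-reversing bijection $h \mapsto \ff(h(\alpha))$, to the lattice $M$ of intermediate fields between $\ff(\alpha)$ and $\ff(t)$. Next I would invoke the fundamental theorem of Galois theory for $L \mid \ff(t)$: since $\fmt$ is separable (this is where $f' \neq 0$ enters) and $L$ is its splitting field, both $L \mid \ff(t)$ and $L \mid \ff(\alpha)$ are Galois. The decisive observation is that the subgroup corresponding to the intermediate field $\ff(\alpha)$ is exactly the point stabilizer $G_\alpha$, because $\sigma \in G$ fixes $\ff(\alpha)$ pointwise if and only if it fixes $\alpha$; equivalently $\ff(\alpha) = L^{G_\alpha}$. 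Hence the Galois correspondence $E \mapsto \mathrm{Gal}(L \mid E)$ restricts to an order-reversing bijection between $M$ and the lattice of subgroups $U$ with $G_\alpha \subseteq U \subseteq G$. Finally, Theorem \ref{thm:wiel} furnishes an order-preserving bijection between this lattice of subgroups and the lattice of blocks of $G$ containing $\alpha$. Composing the three maps---two order-reversing and one order-preserving---yields an order-preserving bijection, i.e.\ a lattice isomorphism, which proves (1).

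For (2), I would trace a normal decomposition $(g,h)$ through the composite. Its image in $M$ is $\ff(h(\alpha))$, the associated subgroup is $U = \mathrm{Gal}(L \mid \ff(h(\alpha))) = \{\sigma \in G \colon \sigma(h(\alpha)) = h(\alpha)\}$, and the corresponding block is the orbit $B = U(\alpha)$. Using that the coefficients of $h$ lie in $\ff$ and are therefore fixed by $G$, a short computation identifies $B$ with the set of roots of $h(x) - h(\alpha)$; indeed every root $\beta$ of $h(x) - h(\alpha)$ lies in $Z$, since $f(\beta) = g(h(\beta)) = g(h(\alpha)) = t$. Now orbit--stabilizer gives $|B| = |U(\alpha)| = [U \colon G_\alpha]$ (the stabilizer of $\alpha$ inside $U$ is $G_\alpha$ because $G_\alpha \subseteq U$), and the tower law turns this into $[L \colon \ff(h(\alpha))] / [L \colon \ff(\alpha)] = [\ff(\alpha) \colon \ff(h(\alpha))] = \deg(h)$. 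Alternatively, since $h(x) - h(\alpha)$ is the minimal polynomial of $\alpha$ over $\ff(h(\alpha))$ and divides the separable polynomial $\fmt$, it has $\deg(h)$ distinct roots, all of which lie in $Z$, so $|B| = \deg(h)$ immediately.

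I expect the main obstacle to be the middle, Galois-theoretic step: one must justify carefully that $L \mid \ff(t)$ and $L \mid \ff(\alpha)$ are Galois and that the field $\ff(\alpha)$ corresponds to $G_\alpha$, and then keep the three order directions consistent so that the composite comes out order-preserving. The two flanking isomorphisms are handed to us by Theorems \ref{thm:bij} and \ref{thm:wiel}, so the genuine content lies in bridging intermediate fields and subgroups correctly.
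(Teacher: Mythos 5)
Your proposal is correct and follows essentially the same route as the paper: part (1) is obtained by composing the bijection of Theorem \ref{thm:bij}, the Galois correspondence between intermediate fields of $\ff(\alpha)\mid\ff(t)$ and subgroups between $G_\alpha$ and $G$, and Theorem \ref{thm:wiel}; part (2) is the same computation $\deg(h) = [\ff(\alpha)\colon\ff(h(\alpha))] = [L^{G_\alpha}\colon L^U] = (U\colon G_\alpha) = |U(\alpha)| = |B|$. Your extra care about the order directions and the alternative root-counting argument for (2) are fine but add nothing essentially new.
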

\begin{proof}
The lattice of decompositions of $f$ is isomorphic to the lattice of intermediate fields of $\ff(\alpha) \mid \ff(t)$. This in turn is by Galois theory isomorphic to the lattice of subgroups between $G_\alpha$ and $G$. Thus, by the previous theorem one achieves an isomorphism between the lattice of decompositions of $f$ and the lattice of blocks containing $\alpha$.

Let $U$ be the subgroup corresponding to $h$ and $B$ be the corresponding block (that is, $\ff(h(\alpha)) = L^U$ and $U(\alpha) = B$). Then $\deg (h) = [\ff(\alpha) \colon \ff(h(\alpha)) ] =  [L^{G_\alpha} \colon L^U ] = ( U \colon G_\alpha)$. On the other hand, we have $ B= U(\alpha)  = \{ \sigma (\alpha) \mid \sigma \in U / G_\alpha \}$. Thus $|B| = (U \colon G_\alpha) = \deg(h)$.
\end{proof}

\subsection{Finding minimal blocks}
\label{subsec:findmin}
In this and in the next section we will discuss an algorithm that computes minimal blocks of the Galois group $G$. This algorithm and all intermediate results were introduced in \cite{lanmil85} for the ground field $\mathbb{Q}$. In our case we have the ground field $\ff (t)$, but the proofs are essentially based on \cite{lanmil85}.

From now on we consider only blocks containing $\alpha$. We call a nontrivial block $B$ minimal if all blocks $B' \subseteq B$ are either trivial or equal to $B$. If $B$ is minimal, then the corresponding decomposition is minimal.
\begin{lemma}
The set $B_\alpha = \{ \beta \in Z \mid \forall \sigma \in G_\alpha \colon \sigma \beta = \beta \}$ is a block of $G$.
\end{lemma}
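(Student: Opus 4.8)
The plan is to recognize $B_\alpha$ as the set of common fixed points of the stabilizer $G_\alpha$, and to compute the effect of a group element on such a set via the conjugation identity $\sigma G_\alpha \sigma^{-1} = G_{\sigma(\alpha)}$ recalled above. For a point $\delta \in Z$ write $B_\delta = \{ \beta \in Z \mid \forall \sigma \in G_\delta \colon \sigma\beta = \beta \}$, so that $B_\alpha$ is the instance at $\delta = \alpha$. First I would note that $\alpha \in B_\alpha$, since every element of $G_\alpha$ fixes $\alpha$ by definition; in particular $B_\alpha \neq \emptyset$.

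The first real step is to show $\sigma(B_\delta) = B_{\sigma(\delta)}$ for every $\sigma \in G$. If $\beta \in B_\delta$ and $\tau \in G_{\sigma(\delta)}$, then $\sigma^{-1}\tau\sigma \in \sigma^{-1} G_{\sigma(\delta)} \sigma = G_\delta$ by the conjugation identity, so $\sigma^{-1}\tau\sigma$ fixes $\beta$, whence $\tau$ fixes $\sigma(\beta)$. Thus $\sigma(B_\delta) \subseteq B_{\sigma(\delta)}$, and applying this with $\sigma^{-1}$ in place of $\sigma$ gives the reverse inclusion, so equality holds. In particular $\sigma(B_\alpha) = B_{\sigma(\alpha)}$.

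The key step, which I expect to be the main obstacle, is the claim that $B_\beta = B_\alpha$ for every $\beta \in B_\alpha$. Indeed, $\beta \in B_\alpha$ means that every element of $G_\alpha$ fixes $\beta$, that is, $G_\alpha \subseteq G_\beta$. Since $G$ is transitive, all point stabilizers have the same cardinality (as recalled above), so $|G_\alpha| = |G_\beta|$, and the inclusion forces $G_\alpha = G_\beta$. Consequently the two sets of common fixed points coincide, $B_\beta = B_\alpha$. The crux is precisely this passage from a containment of stabilizers to their equality, which is where transitivity is essential.

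Finally I would assemble the block property. Let $\sigma \in G$ and suppose $\sigma(B_\alpha) \cap B_\alpha \neq \emptyset$, say $\gamma$ lies in the intersection. From $\gamma \in B_\alpha$ the previous step gives $B_\gamma = B_\alpha$. On the other hand $\gamma \in \sigma(B_\alpha) = B_{\sigma(\alpha)}$; applying the same argument at the point $\sigma(\alpha)$, whose stabilizer $G_{\sigma(\alpha)}$ is likewise contained in $G_\gamma$ and of equal cardinality, yields $B_\gamma = B_{\sigma(\alpha)} = \sigma(B_\alpha)$. Combining the two equalities gives $\sigma(B_\alpha) = B_\alpha$, so $\sigma(B_\alpha) \cap B_\alpha = B_\alpha$. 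Hence $B_\alpha$ is a block.
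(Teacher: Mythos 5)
Your proof is correct and rests on the same two ingredients as the paper's: the conjugation identity $\sigma G_\alpha\sigma^{-1}=G_{\sigma(\alpha)}$ and the observation that transitivity upgrades $G_\alpha\subseteq G_\beta$ to $G_\alpha=G_\beta$ for $\beta\in B_\alpha$. The packaging via $\sigma(B_\delta)=B_{\sigma(\delta)}$ and $B_\gamma=B_\alpha$ is a slightly tidier arrangement of what the paper verifies elementwise, but it is essentially the same argument.
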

\begin{proof}
Let $\beta \in B_\alpha$. Then each $\sigma \in G_\alpha$ fixes $\beta$, hence $G_\alpha \subseteq G_\beta$. Since $G$ is transitive we have  $|G_\alpha |=|G_\beta |$ and thus $G_\alpha = G_\beta$. 

Now let $\tau$ be in $G$ and assume $\tau B_\alpha \cap B_\alpha$ is not empty. Then there are $\beta$, $\beta' \in B_\alpha$ such that $\tau(\beta) = \beta'$. We have $G_\alpha = G_\beta = G_{\beta'}$. Let $\gamma \in B_\alpha$. Then $\tau^{-1} G_\alpha \tau = \tau^{-1} G_{\beta'} \tau = G_\beta = G_\gamma$. Thus for all $\sigma \in G_\alpha$ we have $\sigma \tau (\gamma) = \tau (\gamma)$. Hence $\tau( \gamma) \in B_\alpha$.
\end{proof}

Now factor $\fmt$ over $\ff (\alpha)$ into irreducible factors $\psi_i$ such that 
\begin{equation}\label{equ:factor} 
\fmt = \prod_ {i = 1 } ^s ( x - \alpha_i) \cdot \psi_{s+1} \cdot \ldots \cdot \psi_r,
\end{equation}
with $\alpha = \alpha_1$, $\alpha_i \in \ff(\alpha)$, and $\psi_i = x - \alpha_i$ for $1\leq i \leq s$, and $\deg \psi_i \geq 2$ for $s < i \leq r$. Since $\alpha_i \in \ff(\alpha)$ for $1\leq i \leq s$ there are rational functions $\ell_i$ such that $\alpha_i = \ell_i(\alpha_1)$. Since $\alpha$ is transcendental over $\ff$, from the equation $f(\alpha) = t = f(\ell_i(\alpha))$ follows that $\ell_i$ must be a linear polynomial. 
Clearly $\alpha_i $ is in $ B_\alpha$ for all $1 \leq i \leq s$. Let $\beta \in B_\alpha$. Then $\beta \in L^{G_\alpha} = \ff(\alpha)$. Thus $\beta = \ell(\alpha)$ for some linear polynomial $\ell$. We have proven that  $B_\alpha = \{ \alpha_i \mid 1 \leq i\leq s\}$. 
\begin{claim}
$H =(\{\ell_i \mid 1 \leq i \leq s\}, \circ )$ is a group.
\end{claim}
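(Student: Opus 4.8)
The plan is to identify $H$ with the set of all linear polynomials that fix $f$ under right composition; once that description is in hand, the group axioms become transparent.

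First I would translate the defining property of the $\alpha_i$ into a polynomial identity. Since $\alpha$ is transcendental over $\ff$, the evaluation map $\ff[x] \to \ff[\alpha]$, $p \mapsto p(\alpha)$, is injective, so two polynomials coincide precisely when they agree at $\alpha$. Each $\alpha_i \in B_\alpha$ is a root of $\fmt = f - t$, hence $f(\alpha_i) = t = f(\alpha)$; writing $\alpha_i = \ell_i(\alpha)$ this reads $f(\ell_i(\alpha)) = f(\alpha)$, and injectivity of evaluation at $\alpha$ upgrades it to the polynomial identity $f \circ \ell_i = f$. Conversely, any linear $\ell \in \ff[x]$ with $f \circ \ell = f$ produces a root $\ell(\alpha) \in \ff(\alpha)$ of $\fmt$, which must be one of the linear factors in \eqref{equ:factor}, i.e. $\ell(\alpha) = \alpha_k$ and hence $\ell = \ell_k$ for some $1 \le k \le s$. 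This establishes the characterization
\[
  H = \{ \ell \in \ff[x] \colon \ell \mbox{ is linear and } f \circ \ell = f \}.
\]

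With this description the group axioms follow from associativity of composition. The identity $x = \ell_1$ lies in $H$ since $f \circ x = f$. For closure, I would compute $f \circ (\ell_i \circ \ell_j) = (f \circ \ell_i) \circ \ell_j = f \circ \ell_j = f$; as $\ell_i \circ \ell_j$ is again a linear polynomial in $\ff[x]$, the characterization forces $\ell_i \circ \ell_j = \ell_k$ for some $k$. For inverses, each $\ell_i$ is linear and hence has a linear functional inverse $\ell_i^{-1}$; composing $f = f \circ \ell_i$ on the right with $\ell_i^{-1}$ gives $f \circ \ell_i^{-1} = f$, so $\ell_i^{-1} \in H$. Associativity is inherited from composition of polynomials, and $H$ is finite, so this suffices.

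The only genuinely delicate point is the first step: passing from the evaluated equality $f(\ell_i(\alpha)) = f(\alpha)$ to the formal identity $f \circ \ell_i = f$, together with recognizing that every linear $\ell$ fixing $f$ arises as some $\ell_i$. Both rest entirely on the transcendence of $\alpha$ over $\ff$ (equivalently, on the fact that $\alpha_1, \dots, \alpha_s$ are exactly the roots of $\fmt$ lying in $\ff(\alpha)$), which is already available from the factorization in \eqref{equ:factor}. Once this identification is secured, the remaining verification is routine.
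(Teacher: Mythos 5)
Your proof is correct and follows essentially the same route as the paper: both arguments hinge on using the transcendence of $\alpha$ to upgrade $f(\ell_i(\alpha)) = f(\alpha)$ to the identity $f \circ \ell_i = f$, and on the fact that the roots of $\fmt$ in $\ff(\alpha)$ are exactly $\alpha_1, \dots, \alpha_s$, from which closure and inverses follow. Your explicit characterization of $H$ as all linear polynomials fixing $f$ is a mild repackaging of the paper's verification, not a different method.
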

The neutral element in $H$ is $\ell_1 = x$. Since $\alpha$ is transcendental over $\ff$, the equation $f \circ \ell_i (\alpha) = t = f(\alpha)$ implies $f \circ \ell_i = f$. Then from $f ( \ell_i \circ \ell_j (\alpha)) = f ( \ell_j (\alpha)) = t$ follows that $ \ell_i \circ \ell_j (\alpha)$ is a root of $\fmt$ in $\ff(\alpha)$. Thus there exists $k$ such that $ \ell_i \circ \ell_j  = \ell_k$. In the same way by $f ( \ell_i ^{-1} (\alpha)) = f \circ \ell_i \circ \ell_i^{-1} (\alpha)= t$ we conclude the existence of the inverse of $\ell_i$ in $H$.

The following lemma presents us with the opportunity to lay hands on the Galois group from a computational point of view.

\begin{lemma} 
The mapping $\Phi \colon  G_{B_\alpha} \rightarrow H , \sigma \mapsto \ell_i^{-1}$ for $\sigma (\alpha) = \alpha_i = \ell_i (\alpha_1)$, is a surjective homomorphism with kernel $G_\alpha$.
\end{lemma}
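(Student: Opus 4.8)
The plan is to verify in turn that $\Phi$ is well defined, a homomorphism, surjective, and has kernel $G_\alpha$. For well-definedness, note that any $\sigma \in G_{B_\alpha}$ satisfies $\sigma(B_\alpha) = B_\alpha$, and since $\alpha = \alpha_1 \in B_\alpha$ we get $\sigma(\alpha) \in B_\alpha$, so $\sigma(\alpha) = \alpha_i$ for some $i$. As the roots $\alpha_1, \ldots, \alpha_s$ are pairwise distinct, the index $i$ is uniquely determined, and hence $\Phi(\sigma) = \ell_i^{-1}$ is unambiguous.

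The key step is the homomorphism property. For $\sigma, \tau \in G_{B_\alpha}$ write $\sigma(\alpha) = \ell_i(\alpha)$ and $\tau(\alpha) = \ell_j(\alpha)$. The crucial observation is that each $\ell_j$ has coefficients in $\ff$, while $\sigma$ fixes $\ff$ pointwise (being an automorphism of $L$ over $\ff(t)$); thus $\sigma$ commutes with evaluation of $\ell_j$, giving $\sigma(\ell_j(\alpha)) = \ell_j(\sigma(\alpha))$. Consequently $(\sigma\tau)(\alpha) = \sigma(\ell_j(\alpha)) = \ell_j(\ell_i(\alpha)) = (\ell_j \circ \ell_i)(\alpha)$, and writing $\ell_j \circ \ell_i = \ell_k$ (possible since $H$ is a group) yields $\Phi(\sigma\tau) = \ell_k^{-1} = \ell_i^{-1} \circ \ell_j^{-1} = \Phi(\sigma) \circ \Phi(\tau)$. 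This also explains the inverse in the definition of $\Phi$: it compensates for the order reversal, turning what would otherwise be an anti-homomorphism into a homomorphism.

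For the kernel, the identity of $H$ is $\ell_1 = x$, so $\Phi(\sigma) = x$ holds exactly when $\ell_i = x$, i.e. $\sigma(\alpha) = \alpha_1 = \alpha$, which is precisely $\sigma \in G_\alpha$; note that $G_\alpha \subseteq G_{B_\alpha}$ since $B_\alpha$ is a block containing $\alpha$ (as shown in the proof of \ref{thm:wiel}), so the statement $\ker \Phi = G_\alpha$ is meaningful. For surjectivity it suffices to produce, for each $i$, an element of $G_{B_\alpha}$ sending $\alpha$ to $\alpha_i$: transitivity of $G$ on $Z$ gives some $\sigma \in G$ with $\sigma(\alpha) = \alpha_i$, and since $\alpha_i \in B_\alpha$ we have $\alpha_i \in \sigma(B_\alpha) \cap B_\alpha \neq \emptyset$, so the block property forces $\sigma(B_\alpha) = B_\alpha$, that is $\sigma \in G_{B_\alpha}$. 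This attains every $\ell_i^{-1}$, and as $H$ is closed under inversion these exhaust $H$.

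I expect the homomorphism step to be the main point, both because it is where the group structure of $H$ and the $\ff$-linearity of the $\ell_i$ genuinely enter, and because it is what forces the inverse into the definition of $\Phi$; the remaining verifications are short consequences of transitivity and the defining property of a block.
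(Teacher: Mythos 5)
Your proof is correct and follows essentially the same route as the paper's: the well-definedness, the homomorphism computation $\sigma\tau(\alpha) = \sigma(\ell_j(\alpha)) = \ell_j(\ell_i(\alpha))$ with the resulting order reversal absorbed by the inverse, and the kernel identification are all identical to the paper's argument. The only divergence is in surjectivity, where the paper extends each $\ell_i$ to an $\ff(t)$-automorphism of $L$ preserving $B_\alpha$, while you instead use transitivity of $G$ on $Z$ together with the block property of $B_\alpha$ to land in $G_{B_\alpha}$; both are valid, and yours has the minor advantage of not needing to verify that $\ell_i$ induces an automorphism.
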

\begin{proof}
Let $\sigma$, $\tau \in G_{B_\alpha}$ with $\Phi (\sigma) = \ell_i^{-1}$ and  $\Phi (\tau) = \ell_{j}^{-1}$. Then $\sigma \circ \tau (\alpha) = \sigma ( \ell_j (\alpha)) = \ell_j (\sigma \alpha) = (\ell_j\circ \ell_i) (\alpha)$. Thus $\Phi(\sigma \tau) = (\ell_j \circ \ell_i)^{-1 }= \ell_i^{-1} \circ \ell_j^{-1} = \Phi(\sigma) \Phi(\tau)$. Each $\ell_i$ defines an $\ff(t)$-automorphism on $\ff(\alpha)$, extends to $L$ and maps $B_\alpha$ to $B_\alpha$. Thus $\Phi$ is surjective. 
Finally, $\sigma$ is in the kernel if and only if $\sigma (\alpha) = \alpha$. This is if and only if $\sigma$ is in $G_\alpha$.
\end{proof}

Now let $s = n$. Then we have $B_\alpha = Z$ and $L = \ff(\alpha) = L^{G_\alpha}$. Thus $G_{B_\alpha} = G$ and $G_\alpha = 1$. In this case $\Phi$ is an isomorphism between $G$ and $H$ and we can compute all minimal blocks by the algorithm of \cite{atk75} in polynomial time. 
Before we consider the other cases we will see how to compute the appropriate $h$ from a block $B$. Instead of following \cite{zip91}, we use the following new result.

\begin{lemma}
\label{lem:comph}
Let $B$ be a block and $h$ be the right component of a decomposition of $f$ corresponding to $B$. Then $h(x) - h(\alpha) =  \prod_{\gamma \in B} (x - \gamma)$.
\end{lemma}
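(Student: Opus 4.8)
The plan is to show that $h(x) - h(\alpha)$ is a monic polynomial of degree $|B|$ having every element of $B$ as a root, and then to conclude by a degree count. As recorded in the remark after \ref{thm:bij}, $h(x) - h(\alpha)$ is the minimal polynomial of $\alpha$ over $\ff(h(\alpha))$; since $h$ is normal it is monic, and its degree in $x$ is $\deg(h) = [\ff(\alpha) \colon \ff(h(\alpha))]$, which equals $|B|$ by part (2) of the corollary in \ref{subsec:blocks}.

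First I would translate the block into group-theoretic terms. Let $U = G_B$ be the subgroup of $G$ corresponding to $B$ under \ref{thm:wiel}, so that $\ff(h(\alpha)) = L^U$ and $B = U(\alpha) = \{\sigma(\alpha) \mid \sigma \in U\}$. The heart of the argument is then to verify that each $\gamma \in B$ satisfies $h(\gamma) = h(\alpha)$. Writing $\gamma = \sigma(\alpha)$ with $\sigma \in U$, I would observe that $\sigma$ fixes $\ff$ pointwise (because $\sigma \in G = \mathrm{Gal}(L \mid \ff(t))$ fixes all of $\ff(t)$), hence fixes the coefficients of $h$; therefore $h(\gamma) = h(\sigma(\alpha)) = \sigma(h(\alpha))$. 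Since $h(\alpha) \in L^U$ and $\sigma \in U$, we have $\sigma(h(\alpha)) = h(\alpha)$, so $h(\gamma) = h(\alpha)$ and $\gamma$ is a root of $h(x) - h(\alpha)$.

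To finish, I would count roots. The elements of $B$ are pairwise distinct, being distinct elements of the root set $Z$, and there are exactly $|B| = \deg(h)$ of them. Each is a root of the monic polynomial $h(x) - h(\alpha)$, whose degree in $x$ is $\deg(h)$; hence $\prod_{\gamma \in B}(x - \gamma)$ divides $h(x) - h(\alpha)$, and comparing degrees and leading coefficients yields $h(x) - h(\alpha) = \prod_{\gamma \in B}(x - \gamma)$.

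The only genuine difficulty is the bookkeeping of the correspondence, namely pinning down $U = G_B$, the fixed field $L^U = \ff(h(\alpha))$, and the orbit identity $B = U(\alpha)$ coming from \ref{thm:wiel}, together with the clean application of the Galois action to move $h$ past $\sigma$. Separability of $\fmt$ (from $f' \neq 0$) enters only in the background, to justify $[\ff(\alpha)\colon\ff(h(\alpha))] = \deg(h)$ and the transitive Galois setup; once the correspondence is in hand, exhibiting $\deg(h)$ distinct roots and matching degrees closes the proof at once.
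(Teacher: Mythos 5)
Your proof is correct and follows essentially the same route as the paper: both identify $B$ with the orbit $G_B(\alpha)$, use that $G_B$ fixes $\ff(h(\alpha)) = L^{G_B}$, and conclude by comparing two monic polynomials of degree $|B| = \deg(h)$. The only cosmetic difference is the direction of the divisibility — you show every $\gamma \in B$ is a root of $h(x) - h(\alpha)$, while the paper shows $\prod_{\gamma \in B}(x-\gamma)$ is $G_B$-invariant and hence a multiple of the minimal polynomial $h - h(\alpha)$ of $\alpha$ over $\ff(h(\alpha))$.
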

\begin{proof}
The block $B$ corresponds to the intermediate field $L^{G_B}$ and by \ref{thm:bij} there is a decomposition of $f$ with right component $h$ such that $L^{G_B} = \ff(h(\alpha))$. Set $\lambda = h(\alpha)$. Then the minimal polynomial of $\alpha$ over $\ff(\lambda)$ is $h - \lambda$.
Since $\alpha$ is in $B$ and both polynomials have the same degree, it is sufficient to show that $\prod_{\gamma \in B} (x - \gamma)$ is in $\ff(\lambda)[x]$. Let $\sigma$ be in $G_B$. Then $\sigma (B) = B$ and therefore $\sigma ( \prod_{\gamma \in B} (x - \gamma)) =  \prod_{\gamma \in B} (x - \sigma \gamma) =  \prod_{\gamma \in B} (x - \gamma)$. Since $\ff(\lambda) = L^{G_B}$ this proves that $\prod_{\gamma \in B} (x - \gamma)$ is in $\ff(\lambda)[x]$.
\end{proof}

Note that $\prod_{\gamma \in B} (-\gamma)$ is the constant term of $h(x) - h(\alpha)$. Since $h$ is normal, we get $h = \prod_{\gamma \in B} (x - \gamma)  - \prod_{\gamma \in B} (-\gamma)$, as explicit formula.

\begin{example}
Let $p$ be an odd prime and $\ff$ be a finite field of characteristic $p$. Let $f = x^2 \circ (x^p - x)$, $a$ be an element of the prime field $\mathbb{F}_p$ of $\ff$ and $\zeta$ be either $1$ or $-1$. Then $f(\zeta x + a) = (\zeta^p x^p + a^p - \zeta x - a)^2 = f(x)$. Thus, for a root $\alpha$ of $f - t$ also $\zeta \alpha + a$ is a root of $f$. Thus we have $2p$ roots of $f-t$ in $\ff(\alpha)$ and therefore $\ff(\alpha) \mid \ff(t)$ is Galois. Its Galois group is isomorphic to $\{ (\zeta x + a) \mid \zeta \in \{-1, 1\} , a \in \mathbb{F}_p\} \cong \mathbb{F}_p \rtimes \mathbb{Z}/2\mathbb{Z} \cong D_{2p}$. The dihedral group $D_{2p}$ has one subgroup of order $p$ and $p$ subgroups of order two. Hence $f$ has $p+1$ decompositions. A block with two elements is of the form $\{ \alpha, -\alpha + a\}$. Then $h(x) - h(\alpha) =(x - \alpha)( x- ( -\alpha + a)) = x^2 - ax - (\alpha^2 - a\alpha)$ and we have found the right component of a decomposition of $f$, namely $h=x^2 - ax$. 
\end{example}

Let $1 < s < n $.  Then the induced action of $G_{B_\alpha}$ on $B_\alpha$ is determined by the action of $H$ on $B_\alpha$, since $G_\alpha$ acts trivial on $B_\alpha$. 
If there are minimal blocks of $G_{B_\alpha}$ containing $\alpha$, one can find all of them in polynomial time (by the above mentioned algorithm of Atkinson).

\begin{lemma}
\label{lem:blocksinB}
If \ $\Lambda$ is a minimal block of $G_{B_\alpha}$, then $\Lambda$ is a minimal block of $G$.
\end{lemma}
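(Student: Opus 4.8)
The plan is to establish, directly from the definitions, the three things required of a minimal block of $G$: that $\Lambda$ is a block of $G$, that it is nontrivial, and that it contains no proper nontrivial sub-block. Throughout I would lean on the single structural fact, established earlier, that $B_\alpha$ is itself a block of $G$; this is what controls which elements of $G$ can carry $\Lambda$ onto itself.

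First I would check that $\Lambda$ is a block of $G$. Let $\sigma \in G$ with $\sigma(\Lambda) \cap \Lambda \neq \emptyset$. Since $\Lambda \subseteq B_\alpha$, we have $\emptyset \neq \sigma(\Lambda) \cap \Lambda \subseteq \sigma(B_\alpha) \cap B_\alpha$, and as $B_\alpha$ is a block of $G$ this forces $\sigma(B_\alpha) = B_\alpha$, that is $\sigma \in G_{B_\alpha}$. But $\Lambda$ is a block of $G_{B_\alpha}$, so from $\sigma(\Lambda) \cap \Lambda \neq \emptyset$ we conclude $\sigma(\Lambda) = \Lambda$. This is the heart of the argument: the inclusion $\Lambda \subseteq B_\alpha$ together with $B_\alpha$ being a block promotes an arbitrary $\sigma$ overlapping $\Lambda$ into $G_{B_\alpha}$, where the block property of $\Lambda$ is already known. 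Nontriviality is then immediate, since as a nontrivial block of $G_{B_\alpha}$ we have $|\Lambda| \geq 2$ and $\Lambda \subsetneq B_\alpha$, while $s < n$ gives $B_\alpha \subsetneq Z$; hence $|\Lambda| \geq 2$ and $\Lambda \neq Z$.

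For minimality, I would take a block $\Lambda'$ of $G$ with $\alpha \in \Lambda' \subseteq \Lambda$ and show it is trivial or equal to $\Lambda$. Any $\sigma \in G_{B_\alpha} \subseteq G$ satisfies $\sigma(\Lambda') \subseteq \sigma(B_\alpha) = B_\alpha$ and, $\Lambda'$ being a block of $G$, sends $\Lambda'$ to a disjoint or equal set; thus $\Lambda'$ is a block of $G_{B_\alpha}$ on $B_\alpha$. Minimality of $\Lambda$ over $G_{B_\alpha}$ then forces $\Lambda'$ to be trivial or to equal $\Lambda$, and since $\Lambda' \subseteq \Lambda \subsetneq B_\alpha$ the only trivial option surviving is the singleton $\{\alpha\}$, which is trivial for $G$ as well.

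I do not expect a serious obstacle here; the one point demanding care is the clash between the two notions of \emph{trivial}, relative to the ambient set $B_\alpha$ in one case and $Z$ in the other. In particular $B_\alpha$ is trivial as a block of $G_{B_\alpha}$ but not of $G$, and this discrepancy is neutralized precisely by the strict chain $\Lambda' \subseteq \Lambda \subsetneq B_\alpha \subsetneq Z$. As a cross-check, the same conclusion follows from \ref{thm:wiel}: since $(G_{B_\alpha})_\alpha = G_\alpha$, both the blocks of $G$ inside $B_\alpha$ and the blocks of $G_{B_\alpha}$ on $B_\alpha$ correspond to the same interval of subgroups between $G_\alpha$ and $G_{B_\alpha}$, with the same associated orbit, so the two families coincide as sets of subsets and minimality transfers automatically.
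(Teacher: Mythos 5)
Your proposal is correct and follows essentially the same route as the paper: use $\Lambda \subseteq B_\alpha$ and the fact that $B_\alpha$ is a block of $G$ to force any $\sigma \in G$ with $\sigma(\Lambda)\cap\Lambda\neq\emptyset$ into $G_{B_\alpha}$, then invoke the block property of $\Lambda$ there, and observe that any nontrivial sub-block of $G$ inside $\Lambda$ is a block of $G_{B_\alpha}$ and hence equals $\Lambda$ by minimality. Your extra care about the two notions of triviality (relative to $B_\alpha$ versus $Z$) is a worthwhile clarification of a point the paper leaves implicit, but it does not change the argument.
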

\begin{proof}
Assume $\sigma (\Lambda)  \cap \Lambda \neq \emptyset $ for some $\sigma \in G$. Since $\Lambda \subseteq B_\alpha$ and $\sigma (\Lambda ) \subseteq \sigma (B_\alpha ) $, we get $\sigma (\Lambda)  \cap  \Lambda \subseteq  \sigma( B_\alpha) \cap B_\alpha \neq \emptyset$. Thus $\sigma (B_\alpha ) = B_\alpha$, which means that $\sigma$ is in $G_{B_\alpha}$. Since $\Lambda$ is a block of $G_{B_\alpha}$, we have $\sigma (\Lambda)  = \Lambda$. If $B \subseteq \Lambda$ is a nontrivial block of $G$, then $B$ is a block of $G_{B_\alpha}$ and thus $B = \Lambda$.
\end{proof}

Thus, we can easily compute all minimal blocks that are contained in $B_\alpha$. Note that if there is no nontrivial Block of $G_{B_\alpha}$, then $B_\alpha$ is a minimal block of $G$. 

\begin{example}
\label{ex:addpoly3}
Let $p=3$  and $f=x^9 -x$ over $\mathbb{F}_3$. Let $\alpha$ be a root of $f-t$. One checks that 
\begin{eqnarray*}
f(x)-f(\alpha) & = &  (x -\alpha)(x -\alpha + 1) (x -\alpha -1)  \\ 
& & (x^2 + \alpha x + \alpha^2 + 1) (x^2 + (\alpha + 1)x + \alpha^2 - \alpha -1)\\
& & (x^2 + (\alpha -1)x + \alpha^2 + \alpha -1)
\end{eqnarray*}
is the factorization of $f-t$ into irreducible polynomials over $\mathbb{F}_3(\alpha)$. As shown above $\{ \alpha, \alpha - 1, \alpha+1\}$ forms a block. Thus for $h(x) - h(\alpha) = (x -\alpha)(x -\alpha + 1) (x -\alpha -1) = x^3 -x -(\alpha^3 -\alpha)$ we have that $h = x^3 -x$ is the right component of a decomposition of $f$. One calculates that the corresponding left component is  $g=x^3 +x$.
\end{example}

If $s = 1$ then $B_\alpha = \{\alpha\}$ is trivial. Thus the method above does not apply. But also if $1 < s < n $, there could be minimal blocks $\Lambda$ with $\Lambda \cap B_\alpha = \{\alpha\}$. Thus, let $s < n$. Then $\ff (\alpha)$ is not Galois and we have $G_\alpha \neq 1$. Hence $G$ is not regular and the following theorem applies.
\begin{theorem}
Let $G$ be a finite permutation group on $Z$, which is transitive and not regular. Then $G$ is primitive if and only if 
for all distinct $\alpha$ and $\beta$ in $Z$ we have $ \langle G_\alpha, G_\beta \rangle  = G$.
\end{theorem}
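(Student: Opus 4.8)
The plan is to translate primitivity into maximality of a point stabilizer via \ref{thm:wiel}, and then to study the subgroup $\langle G_\alpha, G_\beta \rangle$ in relation to this maximality. First I would record the reformulation: by \ref{thm:wiel} the lattice of blocks containing $\alpha$ is isomorphic to the lattice of subgroups between $G_\alpha$ and $G$, with the trivial blocks $\{\alpha\}$ and $Z$ corresponding to $G_\alpha$ and $G$. Since $G$ is transitive, every nonempty block is a translate $\sigma(B)$ of a block $B$ containing $\alpha$, so a block of imprimitivity exists at all if and only if one exists containing $\alpha$. Hence $G$ is primitive if and only if there is no subgroup strictly between $G_\alpha$ and $G$, that is, $G_\alpha$ is a maximal subgroup of $G$. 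Both implications are then proved by comparing $\langle G_\alpha, G_\beta \rangle$ with this maximality.

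For the implication that the stated generation condition forces primitivity, I would argue by contraposition. Suppose $G$ is imprimitive; using transitivity as above we may take a block of imprimitivity $B$ with $\alpha \in B$ and $|B| \ge 2$, and choose $\beta \in B$ with $\beta \neq \alpha$. For any $\gamma \in B$ and any $\sigma \in G_\gamma$ we have $\gamma \in \sigma(B) \cap B$, so $\sigma(B) = B$; thus $G_\gamma \subseteq G_B$ for every $\gamma \in B$, and in particular $G_\alpha$ and $G_\beta$ both lie in $G_B$. Moreover $G_B \neq G$, since otherwise $B$ would be $G$-invariant and transitivity would force $B = Z$. Therefore $\langle G_\alpha, G_\beta \rangle \subseteq G_B \neq G$, and the condition fails for this pair. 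Note that this direction does not use non-regularity.

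For the converse I would assume $G$ primitive and fix distinct $\alpha, \beta \in Z$. By the reformulation $G_\alpha$ is maximal, so $\langle G_\alpha, G_\beta \rangle$ is either $G_\alpha$ or $G$; it suffices to exclude the first possibility, i.e. to show $G_\beta \not\subseteq G_\alpha$. This is exactly where non-regularity is needed, through the block $B_\alpha = \{ \delta \in Z \mid \sigma(\delta) = \delta \text{ for all } \sigma \in G_\alpha \}$ shown above to be a block: being a block containing $\alpha$, it is trivial by primitivity, and it cannot equal $Z$, for that would make $G_\alpha$ trivial and $G$ regular; hence $B_\alpha = \{\alpha\}$. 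Consequently $\beta \notin B_\alpha$, so some $\sigma \in G_\alpha$ moves $\beta$, giving $G_\alpha \neq G_\beta$; since transitivity makes all point stabilizers equicardinal, neither of $G_\alpha, G_\beta$ contains the other, and in particular $G_\beta \not\subseteq G_\alpha$. Thus $\langle G_\alpha, G_\beta \rangle$ properly contains $G_\alpha$ and must equal $G$ by maximality. I expect this last direction to be the main obstacle: maximality alone only yields the dichotomy $G_\alpha$ or $G$, and ruling out the degenerate case $G_\alpha = G_\beta$ genuinely requires non-regularity --- indeed a regular primitive group such as $\mathbb{Z}/p$ acting on itself by translation satisfies $G_\alpha = G_\beta = 1$ and is a counterexample once that hypothesis is dropped.
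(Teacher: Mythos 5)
Your proof is correct and follows essentially the same route as the paper: both directions rest on the lattice correspondence of \ref{thm:wiel}, and the crux of the hard direction --- ruling out the degenerate case $G_\alpha = G_\beta$ --- is handled exactly as in the paper, via the block $B_\alpha$ together with non-regularity. The only cosmetic difference is that you argue that direction directly through maximality of $G_\alpha$, whereas the paper argues the contrapositive by exhibiting $\langle G_\alpha, G_\beta \rangle(\alpha)$ (or, failing that, $B_\alpha$) as a nontrivial block.
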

\begin{proof}
Let $G$ be imprimitive and $\Lambda$ be a nontrivial block with $\alpha$, $\beta \in \Lambda$ and $\alpha \neq \beta$. Then $G_\alpha$, $G_\beta \subseteq G_\Lambda$. Thus by \ref{thm:wiel}, we get $\langle G_\alpha, G_\beta\rangle  \subseteq G_\Lambda \neq G$, since $\Lambda \neq Z$.

Now let $\langle G_\alpha, G_\beta\rangle \neq G$ for some $\alpha \neq \beta$. Then $\Lambda = \langle G_\alpha, G_\beta \rangle (\alpha)$ is a block $\neq Z$ as shown in the proof of \ref{thm:wiel}. If $\Lambda$ is nontrivial, then we are done. Thus assume $\Lambda = \{ \alpha \}$.
Then we have $\sigma (\alpha) = \alpha$ for all $\sigma \in G_\beta$. Thus $ G_\beta \subseteq G_\alpha$. 
Since $|G_\alpha| = |G_\beta|$ we have $G_\alpha = G_\beta$. But then $\alpha$, $\beta \in B_\alpha$ and $B_\alpha$ is trivial only if $B_\alpha = Z$. Since $G$ is not regular there is $1\neq \sigma \in G_\alpha$ and $\gamma$ such that $\sigma(\gamma) \neq \gamma$. Thus $\gamma \notin B_\alpha$. Hence $B_\alpha \neq Z$, which is thus a nontrivial block.
\end{proof}

\begin{proposition} 
\label{satz:lam} 
Let $\Lambda$ be a minimal block of $G$ with $\alpha \in \Lambda$ and $\Lambda \cap B_\alpha = \{\alpha\}$. Then for all $\beta \in \Lambda$ distinct from $\alpha$ the orbit $\langle G_\alpha , G_\beta \rangle  (\alpha)$ equals $\Lambda$. 
\end{proposition}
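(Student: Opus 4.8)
The plan is to study the subgroup $U = \langle G_\alpha, G_\beta \rangle$ and the block it generates at $\alpha$. Since $G_\alpha \subseteq U$, the very same computation used in the proof of \ref{thm:wiel} (with $U$ in place of the intermediate subgroup there) shows that the orbit $U(\alpha)$ is a block of $G$ containing $\alpha$. My strategy is then to squeeze this block between $\{\alpha\}$ and $\Lambda$ and let the minimality of $\Lambda$ collapse the two endpoints, forcing $U(\alpha) = \Lambda$.

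First I would establish the inclusion $U(\alpha) \subseteq \Lambda$. Because $\alpha$ and $\beta$ both lie in the block $\Lambda$, any $\sigma \in G_\alpha$ satisfies $\alpha = \sigma(\alpha) \in \sigma(\Lambda) \cap \Lambda$, so $\sigma(\Lambda) = \Lambda$; likewise every element of $G_\beta$ stabilizes $\Lambda$. Hence $G_\alpha, G_\beta \subseteq G_\Lambda$, and therefore $U \subseteq G_\Lambda$, which gives $U(\alpha) \subseteq G_\Lambda(\alpha) \subseteq \Lambda$. Now $U(\alpha)$ is a block contained in the minimal block $\Lambda$; it contains $\alpha$, so it is nonempty, and it is not all of $Z$ since $\Lambda \neq Z$. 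By the minimality of $\Lambda$ the only remaining possibilities are $U(\alpha) = \{\alpha\}$ or $U(\alpha) = \Lambda$.

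The crux of the argument — and the only place the hypothesis $\Lambda \cap B_\alpha = \{\alpha\}$ is used — is ruling out the degenerate case $U(\alpha) = \{\alpha\}$. If $U(\alpha) = \{\alpha\}$, then $U$ fixes $\alpha$, so $U \subseteq G_\alpha$; combined with $G_\alpha \subseteq U$ this yields $U = G_\alpha$, and in particular $G_\beta \subseteq G_\alpha$. Since $G$ is transitive, all point stabilizers have the same finite cardinality, so the inclusion $G_\beta \subseteq G_\alpha$ forces $G_\beta = G_\alpha$; thus every element of $G_\alpha$ fixes $\beta$, which is to say $\beta \in B_\alpha$. But then $\beta \in \Lambda \cap B_\alpha = \{\alpha\}$, contradicting $\beta \neq \alpha$. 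Therefore $U(\alpha) = \Lambda$, as claimed. I expect this last case distinction to be the only genuinely delicate point; the rest is a direct reuse of the block-generation construction from \ref{thm:wiel} together with the equality of stabilizer cardinalities that transitivity provides.
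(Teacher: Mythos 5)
Your proposal is correct and follows essentially the same route as the paper: both establish $\langle G_\alpha, G_\beta\rangle(\alpha) \subseteq \Lambda$, use the hypothesis $\Lambda \cap B_\alpha = \{\alpha\}$ together with the equality of stabilizer cardinalities to rule out the singleton case (the paper phrases this as $G_\alpha \neq G_\beta$ implying the orbit has more than one element, you phrase it as a contradiction), and then invoke minimality of $\Lambda$. The only cosmetic difference is that you spell out the inclusion $U(\alpha)\subseteq\Lambda$ via $U \subseteq G_\Lambda$, which the paper leaves as ``clearly''.
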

\begin{proof}
Let $\beta \neq \alpha$ be in $\Lambda$. Clearly $\langle G_\alpha , G_\beta \rangle  (\alpha) = \{ \sigma (\alpha ) \mid \sigma \in \langle G_\alpha , G_\beta \rangle  \} \subseteq \Lambda$. Now if $G_\alpha = G_\beta$ then $\beta$ would be fixed by $G_\alpha$ and thus $\beta \in B_\alpha$, which is a contradiction to the assumption. Thus we have $G_\alpha \neq G_\beta$ and therefore $|\langle G_\alpha , G_\beta \rangle  (\alpha)| > 1$. Thus $\langle G_\alpha , G_\beta \rangle (\alpha)$ is a nontrivial block contained in $\Lambda$, which implies equality by the minimality of $\Lambda$.
\end{proof}

\begin{lemma}
\label{lem:gab}
Let $\fmt = \prod_{i=1}^r \psi_i$ be a factorization of $\fmt$ into irreducible factors $\psi_i$ over $\ff(\alpha)$. Let $\beta \in Z$ and $j$ be such that $\psi_j (\beta) = 0$. Then $G_\alpha (\beta)  = \{ \gamma \in Z \colon \psi_j (\gamma ) = 0\}$.
\end{lemma}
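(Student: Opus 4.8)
The plan is to prove the two inclusions separately: the inclusion $G_\alpha(\beta) \subseteq \{\gamma \in Z \colon \psi_j(\gamma) = 0\}$ by a direct symmetry argument, and the reverse inclusion by invoking the transitivity of a Galois group on the roots of an irreducible polynomial.

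First I would establish the easy inclusion. The stabilizer $G_\alpha$ consists precisely of those $\ff(t)$-automorphisms of $L$ that fix $\alpha$, and hence fix all of $\ff(\alpha) = L^{G_\alpha}$ pointwise. Since $\psi_j$ has its coefficients in $\ff(\alpha)$, every $\sigma \in G_\alpha$ commutes with evaluation of $\psi_j$: applying $\sigma$ to the equation $\psi_j(\beta) = 0$ and using that $\sigma$ fixes the coefficients of $\psi_j$ yields $\psi_j(\sigma\beta) = 0$. Thus every element of the orbit $G_\alpha(\beta)$ is a root of $\psi_j$.

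For the reverse inclusion I would show that $G_\alpha$ acts transitively on the roots of $\psi_j$. The key point is that $L$ is Galois over $\ff(\alpha)$: because $f' \neq 0$ the polynomial $\fmt$ is separable, so its splitting field $L$ is a finite Galois extension of $\ff(t)$, and therefore also of the intermediate field $\ff(\alpha)$, with $G_\alpha = \mathrm{Gal}(L \mid \ff(\alpha))$. Now let $\gamma \in Z$ be any root of $\psi_j$. Since $\psi_j$ is irreducible over $\ff(\alpha)$ and both $\beta$ and $\gamma$ are roots of it, there is an $\ff(\alpha)$-isomorphism $\ff(\alpha)(\beta) \to \ff(\alpha)(\gamma)$ sending $\beta \mapsto \gamma$, as both fields are $\ff(\alpha)$-isomorphic to $\ff(\alpha)[x]/(\psi_j)$. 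Because $L$ is normal over $\ff(\alpha)$ and contains all roots of $\psi_j$, this isomorphism extends to an automorphism $\sigma \in \mathrm{Gal}(L \mid \ff(\alpha)) = G_\alpha$ with $\sigma(\beta) = \gamma$. Hence $\gamma \in G_\alpha(\beta)$, which gives the inclusion $\{\gamma \in Z \colon \psi_j(\gamma) = 0\} \subseteq G_\alpha(\beta)$ and completes the proof.

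I expect these two inclusions to be the entire content, with no serious obstacle. The first is a one-line argument exploiting that $G_\alpha$ fixes the coefficients of $\psi_j$, and the second is the standard fact that the Galois group of a splitting field acts transitively on the roots of an irreducible polynomial. The only step requiring a little care is confirming that $L \mid \ff(\alpha)$ is indeed Galois, so that the abstract $\ff(\alpha)$-isomorphism between $\ff(\alpha)(\beta)$ and $\ff(\alpha)(\gamma)$ can be lifted to an element of $G_\alpha$; this is guaranteed by the separability of $\fmt$ together with the fact that $L$ is already a splitting field over the subfield $\ff(t)$.
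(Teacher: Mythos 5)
Your proof is correct and follows essentially the same route as the paper: the forward inclusion via $G_\alpha$ fixing the coefficients of $\psi_j$, and the reverse inclusion by producing an $\ff(\alpha)$-automorphism sending $\beta$ to $\gamma$ and extending it to $L$ (the paper passes through the splitting field of $\psi_j$ over $\ff(\alpha)$ rather than the simple extensions, but this is the same argument). The only cosmetic difference is that the paper treats the linear case $\psi_j = x-\beta$ separately, which your uniform argument renders unnecessary.
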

\begin{proof}
If $\psi_j = (x - \beta)$ then $\beta$ is in $\ff(\alpha)$ and is thus fixed by $G_\alpha$. Hence the claim holds. Thus let $\deg(\psi_j) \geq 2$. Each $\sigma \in G_\alpha$ acts trivial on $\ff(\alpha )$. Thus $\sigma ( \psi_j) = \psi_j$ and $\psi_j( \sigma (\beta ) ) = \sigma (\psi_j (\beta )) = 0$. Hence $G_\alpha (\beta)  \subseteq \{ \gamma \in Z \colon \psi_j (\gamma ) = 0\}$. 

For the other direction let $\gamma$ be such that $\psi_j (\gamma) =0$. Let $M$ be the splitting field of $\psi_j$ over $\ff(\alpha)$. Then there is $\sigma$ in $\mbox{Gal} (M \mid \ff(\alpha))$ such that $\sigma (\beta) = \gamma$. Since $\ff(\alpha) \subseteq M \subseteq L$ we have that $\sigma$ extends to an automorphism in $\mbox{Gal}(L \mid \ff(\alpha)) = G_\alpha$. Thus $\gamma$ is in $G_\alpha (\beta)$.
\end{proof}

Fix $\nu>s$ and $\beta$ such that $\beta$ is a root of $\psi_\nu$. Note that $\beta \notin B_\alpha$ and thus $\langle G_\alpha , G_\beta \rangle  (\alpha)$ is a block, which is minimal if there is a minimal block containing $\alpha$ and $\beta$. Let $\sigma$ be in $G$ such that $\sigma(\alpha) = \beta$ and set $\psi^*_i = \sigma(\psi_i)$ for all $ 1\leq i \leq r$. Then the polynomials $\psi^*_i \in \ff(\beta) [x]$ are the polynomials $\psi_i$ with $\beta$ substituted for $\alpha$ and the irreducible factors of $\fmt$ over $\ff(\beta)$ are precisely the polynomials $\psi^*_i$. Note that if $\gamma$ is a root of $\psi^*_j$ we have $G_\beta (\gamma) =  \{ \gamma' \in Z \colon \psi^*_j (\gamma' ) = 0\}$ by the previous lemma. 

\begin{proposition}
\label{lem:lambdagcd}
Consider the bipartite graph $\Gamma_\beta$ with the set of vertices consisting of $\psi_i$ and $\psi^*_i$ for $ 1\leq i \leq r$ and with an undirected edge between $\psi_i$ and $\psi^*_j$ if $\gcd (\psi_i , \psi^*_j ) \neq 1$. Let $C_\beta$ be the the set of roots of those $\psi_i$ that are connected to $\psi_1$. Then $\langle G_\alpha , G_\beta \rangle  (\alpha) = C_\beta$.
\end{proposition}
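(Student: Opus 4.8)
The plan is to reinterpret the orbit $\langle G_\alpha, G_\beta\rangle(\alpha)$ as a connected component of a graph on the root set $Z$ and to identify this component with the factor-level component $C_\beta$ inside $\Gamma_\beta$. First I would set $U = \langle G_\alpha, G_\beta\rangle$ and give a purely combinatorial description of $U(\alpha)$: build an auxiliary graph $\Delta$ on the vertex set $Z$ in which two roots are joined by an edge whenever they lie in a common orbit of $G_\alpha$ or in a common orbit of $G_\beta$. I claim $U(\alpha)$ is exactly the connected component of $\alpha$ in $\Delta$. Indeed, every element of $U$ is a word in elements of $G_\alpha$ and $G_\beta$, and applying such a word to $\alpha$ one letter at a time moves the current point only within its $G_\alpha$- or $G_\beta$-orbit, so $U(\alpha)$ lies in the component of $\alpha$; conversely, a $\Delta$-path from $\alpha$ to $\gamma$ realizes $\gamma$ as the image of $\alpha$ under a product of elements of $G_\alpha$ and $G_\beta$ (one factor per edge, chosen to realize the relevant orbit coincidence), so $\gamma \in U(\alpha)$.

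Next I would translate the orbit structure into the two factorizations. By \ref{lem:gab} the $G_\alpha$-orbits on $Z$ are precisely the root sets of the factors $\psi_i$, and by the analogous statement for $\beta$ (noted just before the proposition) the $G_\beta$-orbits are the root sets of the $\psi^*_j$. Since $\fmt$ is separable, its roots in $Z$ are distinct and the $\psi_i$ (likewise the $\psi^*_j$) are pairwise coprime; hence each $\gamma \in Z$ lies in exactly one $\psi_i$ and exactly one $\psi^*_j$, and $\gcd(\psi_i, \psi^*_j) \neq 1$ holds if and only if $\psi_i$ and $\psi^*_j$ share their (then unique) common root in $Z$. Thus the edges of $\Delta$ are exactly those inside the root set of each $\psi_i$ together with those inside the root set of each $\psi^*_j$.

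Finally I would show that $\Delta$-connectivity on roots matches $\Gamma_\beta$-connectivity on factors. Following a $\Delta$-path and recording for each root the factor $\psi_i$ containing it yields a $\Gamma_\beta$-path between the corresponding $\psi$-vertices: a step within one $\psi_i$ keeps the $\psi$-vertex fixed, while a step within one $\psi^*_j$ connects the two relevant $\psi$-vertices through their common neighbour $\psi^*_j$. Conversely, a $\Gamma_\beta$-path $\psi_{i_0} - \psi^*_{j_0} - \psi_{i_1} - \cdots$ supplies, via the shared roots sitting on its edges, a $\Delta$-path joining a root of $\psi_{i_0}$ to a root of the final factor, and within each single $\psi_i$ all roots are already $\Delta$-connected. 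Because $\psi_1 = x - \alpha$ is the factor whose root is $\alpha$, the $\Delta$-component of $\alpha$ is the union of the root sets of exactly those $\psi_i$ lying in the $\Gamma_\beta$-component of $\psi_1$, which is $C_\beta$; combined with the first step this gives $U(\alpha) = C_\beta$.

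The main obstacle is the bookkeeping in this last step: one must check that the coarse factor-level connectivity in the bipartite graph $\Gamma_\beta$ faithfully reflects the fine root-level connectivity in $\Delta$, in both directions, and in particular that passing through a shared-root edge of $\Gamma_\beta$ does not silently connect factors that are not $\Delta$-connected. Once the orbit-as-component description and \ref{lem:gab} are in place, the remaining verifications are routine.
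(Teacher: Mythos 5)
Your proposal is correct and follows essentially the same route as the paper: both directions rest on \ref{lem:gab} (and its $\beta$-analogue) to identify $G_\alpha$- and $G_\beta$-orbits with the root sets of the $\psi_i$ and $\psi^*_j$, and your induction along a $\Delta$-path, respectively along a $\Gamma_\beta$-path, is exactly the paper's induction on the word length $\sigma_u\cdots\sigma_1$ and on the path from $\psi_1$ to $\psi_i$. The only differences are presentational (the explicit auxiliary graph $\Delta$, which the paper leaves implicit) plus one harmless slip: two factors $\psi_i$, $\psi^*_j$ with $\gcd(\psi_i,\psi^*_j)\neq 1$ may share more than one root, though only the existence of a common root is ever used.
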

\begin{proof}
Each element $\gamma$ of $\langle G_\alpha , G_\gamma \rangle  (\alpha)$ is of the form $\sigma_u \ldots \sigma_2 \sigma_1(\alpha)$ with $\sigma_i$ in $G_\alpha$ or in $G_\beta$. We prove by induction on $u$ that $\gamma$ is in $C_\beta$. The induction basis is the fact that $\alpha \in C_\beta$. 
For the induction step let $\gamma = \sigma_{u-1} \ldots \sigma_2 \sigma_1 (\alpha)$ be in $C_\beta$. Then there is some $i$ such that $\psi_i(\gamma) = 0$ and $\psi_i$ is connected to $\psi_1$. We distinguish two cases: First we have $\sigma_u \in G_\alpha$. Since $\sigma_u (\psi_i) = \psi_i$, we have $0=\sigma_u (\psi_i (\gamma)) = \psi_i (\sigma_u\gamma)$. Thus also $\sigma_u(\gamma) \in C_\beta$. In the second case we have $\sigma_u \in G_\beta$. Let $j$ such that $\psi^*_j (\gamma) = 0$. Then there is an edge between $\psi^*_j$ and $\psi_i$ and thus $\psi^*_j$ is connected to $\psi_1$. Since $\sigma_u (\gamma) \in G_\beta(\gamma)$ we have that also $\sigma_u (\gamma)$ is a roots of $\psi^*_j$. Hence if $\sigma_u (\gamma)$ is a root of $\psi_k$ we have $\gcd(\psi_k , \psi^*_j ) \neq 1$ and thus $\sigma_u (\gamma) \in C_\beta$. 

For the other direction, let $\gamma$ be in $C_\beta$ and $i$ such that $\psi_i(\gamma ) = 0$.  Then there is a path $P$ form $\psi_1$ to $\psi_i$, say $P = (\psi_1, \psi^*_\ell), \ldots, (\psi_k, \psi^*_j) (\psi^*_j, \psi_i)$. By an induction argument one can assume that the roots of $\psi_k$ are already in $\langle G_\alpha , G_\gamma \rangle  (\alpha)$. Since $\gcd(\psi^*_j, \psi_i) \neq 1$ there is $\beta'$ such that $\psi^*_j(\beta') = 0$ and $\psi_i(\beta')= 0$. Thus, by \ref{lem:gab} we get $\gamma \in G_\alpha (\beta')$ and $\beta ' \in G_\beta (\alpha')$ where $\alpha'$ is a common root of $\psi_k$ and $\psi^*_j$. Then $\alpha' \in \langle G_\alpha , G_\beta \rangle (\alpha)$ and $\gamma = \sigma_1 \sigma_2 (\alpha')$ for $\sigma_1 \in G_\beta$ and $\sigma_2 \in G_\alpha$. Hence $\gamma$ is in $\langle G_\alpha , G_\beta \rangle  (\alpha )$.
\end{proof}

Now let $\Lambda$ be a minimal block containing $\alpha$ and $\beta$. In case $\Lambda \subseteq B_\alpha$ we saw that one can calculate $\Lambda$ directly and therefore calculate $h$ by \ref{lem:comph}. Otherwise by \ref{satz:lam} we have $\Lambda = \langle G_\alpha , G_\beta \rangle  (\alpha)$. Then as seen in \ref{lem:lambdagcd} one can calculate $h$ by $ h(x) - h(\alpha) = \Pi_{\gamma \in \Lambda} (x - \gamma) = \Pi \psi_i$, where the last product is taken over all $\psi_i$ that are connected to $\psi_1$ in $\Gamma_\beta$.

\begin{contexample}{ex:addpoly3}
Let us continue to find decompositions of $f = x^9 - x$. Let 
\begin{align*}
\psi_1 &= x^2 + \alpha x + \alpha^2 + 1, \\
\psi_2 &= x^2 + \alpha x + x + \alpha^2  - \alpha - 1 \mbox{ and} \\ 
\psi_3 &= x^2 + \alpha x  - x + \alpha^2 + \alpha - 1 .
\end{align*} 
Then as before we have $f - t = (x - \alpha)(x - \alpha + 1)( x - \alpha -1)\psi_1\psi_2\psi_3$. Now let $\zeta$ be a root of $x^2 +x -1$ in $\mathbb{F}_9$ and note that we have then 
\begin{align*}
\psi_1 &= (x - (\alpha + \zeta + 1))(x - (\alpha - \zeta - 1)),\\
\psi_2 &= (x - (\alpha + \zeta - 1))(x - (\alpha - \zeta )) \mbox{ and}\\
\psi_3 &= (x - (\alpha + \zeta))(x - (\alpha - \zeta + 1)) .
\end{align*}
Let $\beta_1 = \alpha + \zeta + 1$. Then we have that $\psi_1$ with $\beta_1$ substituted for $\alpha$ is $\psi^*_1 = (x - \alpha)(x - (\alpha -\zeta -1))$ and thus $C_{\beta_1} = \{ \alpha , \alpha + \zeta + 1 , \alpha - \zeta - 1\}$ is a minimal block. 
We get $h(x) - h(\alpha) = (x- \alpha)\psi_1 = x^3 +x - (\alpha^3 + \alpha)$ and thus the corresponding decomposition has right component $x^3 + x$. Then the corresponding left component is $x^3 - x$.

Now for $\beta_2 = \alpha + \zeta -1$ we get $\psi^*_3 = (x- \alpha)(x  -(\alpha- \zeta -1))$. Thus $C_{\beta_2} = C_{\beta_1}$. In the same way $\alpha - \zeta +1$ does not yield any further block. Therefore all in all $f$ has exactly two decompositions over $\mathbb{F}_3$.

Note that going to the extension $\mathbb{F}_9$ of $\mathbb{F}_3$ unveils more structure. Actually, $f$ has four decompositions over $\mathbb{F}_9$ as we will see in \ref{ex:add}.
\end{contexample}

\subsection{The algorithm}
\cite{zip91} describes loosely an algorithm that computes decompositions of rational functions. The following is a concrete description of an algorithm that computes all minimal decompositions of a polynomial, whose derivative does not vanish. It is mainly based on \cite{zip91} and on \cite{lanmil85}. The runtime estimation in \ref{thm:impl} is new.

\begin{figure}[h!]
\begin{algorithm}{minDec}[Computing minimal decompositions]
\item A monic polynomial $f \in \ff [x]$ of degree $n$ with $f' \neq 0$.
\item A list of decompositions $(g, h)$ of $f$. This list is empty if $f$ is indecomposable.
\begin{block}
	\item Set $List = \{\}$ and let $\ff(\alpha)$ be the rational function field in $\alpha$. 
	\item \algolabel{factor} Factor $f(x) - f(\alpha)$ in $\ff (\alpha) [x]$ into $\prod_ {i = 1 } ^s ( x - \alpha_i) \cdot \psi_{s+1} \cdot \ldots \cdot \psi_r$ as in \ref{equ:factor}. 
	\begin{ifblock}
		{$s > 1$}
		\item Set $B_\alpha = \{ \alpha_i \mid 1 \leq i \leq s\}$ and $H = \{ \ell_i \mid 1 \leq i \leq s\}$ where $\alpha_i = \ell_i(\alpha)$. 
		\item \algolabel{atkinson} Set $AtkinsonBlocks = $ \texttt{Atkinson}($H$, $B_\alpha$, $\alpha$).
		\begin{forblock}
			{$\Lambda \in  AtkinsonBlocks$ with $ |\Lambda | < n$}
			\item \algolabel{h1} Compute $h(x) = \prod_{\gamma \in \Lambda} (x - \gamma) - \prod_{\gamma \in \Lambda}  (-\gamma)$.
			\item \algolabel{g1} Compute $g$ such that $f = g \circ h$.
			\item Attach $(g, h)$ to $List$.
		\end{forblock}
	\end{ifblock}
	\begin{forblock}
		{$ \nu \in \{s+1, \ldots , r \}$}
		\item Let $\beta$ be a root of $\psi_\nu$ and let $\psi^*_i$ be $\psi_i$ with $\beta$ substituted for $\alpha$, for all $1 \leq i \leq r$.
		\item \algolabel{gcd} Compute the graph $\Gamma_\beta$ as in \ref{lem:lambdagcd}.
		\item \algolabel{inu} Compute $I_\nu = \{i  \colon \psi_i \mbox{ is connected to } \psi_1 \mbox{ in } \Gamma_\beta \}$.
		\begin{ifblock}
			{$I_\nu \neq \{1, \cdots, r \}$}
			\item \algolabel{h2} Compute $h(x) - h(\alpha) = \prod_{i \in I_\nu} \psi_i$, where $h(x)$ is in $\ff [x]$ normal.
			\item \algolabel{g2} Compute $g$ such that $f = g \circ h$.
			\item Attach $(g, h)$ to $List$.
		\end{ifblock}
	\end{forblock}
	\item \RETURN $List$.
\end{block}
\end{algorithm}
\end{figure}

\ref{minDec} calls a subroutine \texttt{Atkinson}($G$, $Z$, $\alpha$) which returns a list of all minimal blocks of the permutation group $G$ on $Z$ that are containing $\alpha$. If $G$ is primitive this list consists of $Z$ only. 

\begin{theorem}
\ref{minDec} correctly computes all minimal decompositions of $f$.
\end{theorem}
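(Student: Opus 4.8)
The plan is to translate the statement into the language of blocks and then match the two phases of \ref{minDec} against the two possible kinds of minimal block containing $\alpha$. By the block--decomposition correspondence (the corollary following \ref{thm:wiel}, together with \ref{thm:bij}), the minimal decompositions of $f$ are in bijection with the minimal blocks $B$ of $G$ containing $\alpha$, and $\deg h=|B|$ for the associated right component $h$. So it suffices to prove that \ref{minDec} enumerates exactly the minimal blocks containing $\alpha$ and, for each, returns the correct $(g,h)$. The extraction of $(g,h)$ is uniform and follows from \ref{lem:comph}: for a block $B$ the normal right component is $h=\prod_{\gamma\in B}(x-\gamma)-\prod_{\gamma\in B}(-\gamma)$, and $g$ is the unique polynomial with $f=g\circ h$. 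In the second phase the product $\prod_{\gamma\in B}(x-\gamma)$ is moreover equal to $\prod_{i\in I_\nu}\psi_i$ by \ref{lem:lambdagcd}, which is precisely what the algorithm forms; so once the correct block is identified, the output pair is correct.

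For completeness, let $\Lambda$ be a minimal block with $\alpha\in\Lambda$. By \ref{lem:inters}, $\Lambda\cap B_\alpha$ is a block contained in $\Lambda$, so minimality forces $\Lambda\cap B_\alpha=\Lambda$ or $\Lambda\cap B_\alpha=\{\alpha\}$. In the first case $\Lambda\subseteq B_\alpha$; since the induced action of $G_{B_\alpha}$ on $B_\alpha$ coincides with the action of $H$, $\Lambda$ is a minimal block of that action and is returned by the \texttt{Atkinson} call, passing the filter $|\Lambda|<n$ (which discards only $\Lambda=Z$, i.e.\ the non-decomposition $h=f$). In the second case pick any $\beta\in\Lambda\setminus\{\alpha\}$; then $\beta\notin B_\alpha$, so $\beta$ is a root of some $\psi_\nu$ with $\nu>s$, and \ref{satz:lam} gives $\Lambda=\langle G_\alpha,G_\beta\rangle(\alpha)$, which by \ref{lem:lambdagcd} is the set $C_\beta$ of roots of the $\psi_i$ connected to $\psi_1$ in $\Gamma_\beta$. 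Hence the second loop, when it processes this $\nu$, forms exactly $\Lambda$; the choice of root of $\psi_\nu$ is irrelevant because these roots form the single orbit $G_\alpha(\beta)$ by \ref{lem:gab} and yield the same block (so the output list should be read up to repetition). Thus every minimal block, and so every minimal decomposition, is produced.

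For soundness one must verify that the algorithm outputs only minimal blocks. The first phase is clear: \texttt{Atkinson} returns only minimal blocks of the $G_{B_\alpha}$-action, and by \ref{lem:blocksinB} each such block is a minimal block of $G$. The crux, and the step I expect to be the main obstacle, is the second phase. There the algorithm outputs, for every $\nu>s$ with $I_\nu\neq\{1,\ldots,r\}$, the block $C_\beta=\langle G_\alpha,G_\beta\rangle(\alpha)$, which by the argument in the proof of \ref{thm:wiel} is the \emph{smallest} block containing $\alpha$ and $\beta$; it is nontrivial since $\beta\neq\alpha$ and proper since $C_\beta\neq Z$. What is not guaranteed by the preceding lemmas is that $C_\beta$ is minimal: by the remark preceding \ref{lem:lambdagcd} this holds precisely when some minimal block contains both $\alpha$ and $\beta$, which for a root $\beta$ of a higher-degree factor must be argued separately. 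The approach would be to suppose $C_\beta$ is not minimal, take a minimal block $\Lambda'$ with $\alpha\in\Lambda'\subsetneq C_\beta$ and hence $\beta\notin\Lambda'$, note that $\Lambda'$ is a union of $G_\alpha$-orbits by \ref{lem:gab} and therefore omits the whole orbit $G_\alpha(\beta)$, and then to rule out such an intermediate block. This last point is exactly where one cannot reason about $G$ as an abstract transitive group but must exploit that it is the Galois group of $\fmt$; establishing it (or, failing that, inserting a minimality test after \ref{lem:lambdagcd}) is the delicate part of the proof, whereas completeness follows cleanly from \ref{satz:lam} and \ref{lem:lambdagcd}.
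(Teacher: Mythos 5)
Your completeness argument is exactly the paper's proof: the same case split via \ref{lem:inters} into $\Lambda\subseteq B_\alpha$ (handled by \texttt{Atkinson} together with \ref{lem:blocksinB} and \ref{lem:comph}) and $\Lambda\cap B_\alpha=\{\alpha\}$ (handled by \ref{satz:lam} and \ref{lem:lambdagcd}), and the same recovery of $h$ from $\prod_{\gamma\in\Lambda}(x-\gamma)$. The ``main obstacle'' you identify in the second phase --- that $C_\beta$ need not be a minimal block --- is mathematically correct but is not something the theorem claims: the statement asserts only that \emph{all} minimal decompositions appear in the output, and every item output is at least a genuine decomposition, since $C_\beta$ is always a nontrivial proper block. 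The paper concedes your point verbatim in the remark immediately following its proof ($C_\beta$ may be non-minimal, and an algorithm returning \emph{only} minimal decompositions ``should keep track of this''), which is precisely the minimality filter you propose as a fallback; so nothing further needs to be established for the stated result.
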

\begin{proof}
Let $(g, h)$ be a minimal decomposition and let $\Lambda$ be the corresponding block. Then either $\Lambda \subseteq B_\alpha$ or $\Lambda \cap B_\alpha = \{\alpha\}$. In the first case $\Lambda$ is computed in Step \short\ref{minDec-atkinson}, by \ref{lem:blocksinB}. Then $h$ is recovered from $\Lambda$ in Step \short\ref{minDec-h1}, by \ref{lem:comph}.
In the second case let $\beta \in \Lambda \setminus \{\alpha\}$ and $\nu$ such that $\psi_\nu (\beta) = 0$. By \ref{lem:lambdagcd} we have $\Lambda = C_\beta = \{\gamma \colon \exists i \in I_\nu \colon \psi_i (\gamma) = 0 \}$, where $I_\nu$ is computed in Step \short\ref{minDec-inu}. Then in Step \short\ref{minDec-h2} we have $\prod_{i \in I_\nu} \psi_i=\prod_{\gamma \in \Lambda} (x - \gamma) = h(x) - h(\alpha)$, from which we can recover $h$. 
\end{proof}

Note that $C_\beta$ is a block even if there is no minimal block containing $\alpha$ and $\beta$. Then either $C_\beta$ is minimal and $\beta \notin C_\beta$ or $C_\beta$ is not minimal and contains minimal blocks. An algorithm that computes \emph{only} minimal decompositions (and outputs each decomposition only once) should keep track of this.

For the runtime consideration let $\ff$ be a field over which one can factor bivariate polynomials in polynomial time (these include, for example, finite fields). The gcd computation in Step \short\ref{minDec-gcd} can be done by computing the resultant, which can be done in polynomial time. Thus the algorithm can be implemented with a polynomial runtime. 
This was already remarked by \cite{zip91}. The following runtime estimation for finite fields is new.

\begin{theorem}
\label{thm:impl}
Let $\ff$ be a finite field, $c \geq 3$ be a natural number and $n$ be the degree of the input polynomial $f$.
Denote the complexity of multiplying two polynomials over $\ff$ of degree at most $n$ by $M(n)$ and let $q$ be the size of $\ff$.
Then there is an implementation of \ref{minDec} that takes an expected number of  $\mathcal{O}^\sim (c n^4M(n)^2 \log(q))$ 
operations in $\ff$ with an error probability of at most $n^3(4n)^{-c}$.
\end{theorem}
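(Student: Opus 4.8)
The plan is to walk through \ref{minDec} step by step, bound the cost of each operation in terms of $n$, $M(n)$, and $\log(q)$, and identify which step dominates. First I would address Step \short\ref{minDec-factor}, the factorization of $f(x) - f(\alpha)$ over the function field $\ff(\alpha)$. Rather than factoring over a function field directly, I would pass to a suitable specialization: choose a random element $a \in \ff$ (or a small extension) in place of $\alpha$, factor the resulting univariate polynomial $f(x) - f(a) \in \ff[x]$, and lift. Since $f(x) - f(\alpha)$ has the factor $(x - \alpha)$, the factorization pattern is governed by the degrees $\deg \psi_i$; the expected cost of univariate factorization over $\mathbb{F}_q$ of a degree-$n$ polynomial is $\mathcal{O}^\sim(n M(n) \log(q))$ by standard results (equal-degree factorization plus distinct-degree factorization). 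The error probability and the factor $n^3$ in the final bound should be traced to the number of bad specializations $a$ (those for which the specialized factorization does not reflect the generic one), which I expect to be $\mathcal{O}(n^3)$ out of $q$ choices, together with the $(4n)^{-c}$ tail coming from repeating the randomized subroutine $c$ times.

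Next I would bound the two loops. In the \texttt{Atkinson} branch (Steps \short\ref{minDec-atkinson}--\short\ref{minDec-g1}), the algorithm of \cite{atk75} computes all minimal blocks of the group $H$ acting on $B_\alpha$ in polynomial time; since $|B_\alpha| = s \leq n$ and the group elements are the linear polynomials $\ell_i$, this contributes a polynomial factor that is dominated by the main term. Recovering $h$ in Step \short\ref{minDec-h1} is a product of at most $n$ linear factors, costing $\mathcal{O}(M(n) \log n)$ by a subproduct tree, and computing $g$ in Step \short\ref{minDec-g1} is a single division/Taylor expansion of cost $\mathcal{O}(M(n) \log n)$ per \cite{gatger99}. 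The dominant work is the second loop over $\nu \in \{s+1, \ldots, r\}$. There are at most $r \leq n$ iterations, and each builds the bipartite graph $\Gamma_\beta$ in Step \short\ref{minDec-gcd}. Forming the $\psi^*_i$ requires substituting $\beta$ for $\alpha$, and deciding $\gcd(\psi_i, \psi^*_j) \neq 1$ for all $\mathcal{O}(n^2)$ pairs $(i,j)$ is where the cost concentrates.

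The key point I would argue is that each gcd test over $\ff(\beta)$ is best replaced by a resultant computation over $\ff$, as the excerpt already suggests: $\gcd(\psi_i, \psi^*_j) \neq 1$ iff the resultant $\mathrm{Res}_x(\psi_i, \psi^*_j)$, viewed as a polynomial in the auxiliary variable encoding $\beta$, vanishes. Each such resultant of two polynomials of degree at most $n$ in $x$, with coefficients of bounded degree in the $\beta$-variable, costs $\mathcal{O}^\sim(n M(n))$, and there are $\mathcal{O}(n^2)$ pairs per value of $\nu$, with $\mathcal{O}(n)$ values of $\nu$. Multiplying these gives $\mathcal{O}^\sim(n^4 M(n))$ per repetition, and the extra $M(n) \log(q)$ factor together with the $c$ repetitions for probability amplification yields the claimed $\mathcal{O}^\sim(c\, n^4 M(n)^2 \log(q))$. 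The connected-component extraction in Step \short\ref{minDec-inu} is a graph traversal on $\mathcal{O}(n)$ vertices and $\mathcal{O}(n^2)$ edges, negligible against the resultant cost.

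The hard part will be the bookkeeping in Step \short\ref{minDec-factor}: correctly bounding the error probability of the randomized specialization and showing that the factorization structure over $\ff(\alpha)$ is faithfully recovered with high probability, so that the $\psi_i$ computed by the algorithm genuinely match those of \ref{equ:factor}. The number of specializations $a$ that fail to preserve the factorization type is controlled by the degree of a suitable discriminant/resultant of $f(x) - f(y)$, which is $\mathcal{O}(n^3)$; this is the origin of the $n^3(4n)^{-c}$ error bound after $c$-fold repetition. Everything else — the two subproduct-tree computations, the Atkinson call, and the resultant-based graph construction — is a routine accumulation of standard fast-arithmetic cost estimates that I would present as a table of per-step bounds and then sum.
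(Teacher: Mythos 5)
Your overall accounting of which loop dominates is broadly right, but you have misplaced the source of both the error probability and the per-test cost, and in doing so you skip the one genuinely technical piece of the argument. In the paper's proof the factorization of $f(x)-f(\alpha)$ over $\ff(\alpha)$ is \emph{not} done by random specialization and lifting; it is a bivariate factorization costing $\mathcal{O}^\sim(n^{\omega+1})$, it contributes nothing to the failure probability, and it is dominated by the main term. The bound $n^3(4n)^{-c}$ has nothing to do with bad specializations of the factorization: the $n^3$ is the total number of coprimality tests $(r-s)r^2\le n^3$ performed while building the graphs $\Gamma_\beta$, and $(4n)^{-c}$ is the failure probability of a single such test after $c$-fold repetition; Bernoulli's inequality then gives $1-(1-(4n)^{-c})^{n^3}\le n^3(4n)^{-c}$.

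The missing idea is the test itself. You cannot simply ``compute the resultant over $\ff$ in the auxiliary variable encoding $\beta$'': the coefficients of $\psi_i$ live in $\ff(\alpha)$ and those of $\psi_j^*$ in $\ff(\beta)$, with $\alpha$ and $\beta$ tied together by an irreducible relation $G(\alpha,\beta)=0$, so the resultant lives in the function field of the curve $G=0$, and deciding exactly whether it vanishes there is precisely the expensive operation one wants to avoid. The paper's \ref{algo:gcd} instead evaluates at a random degree-one place of that curve (a random point $(a,b)$ with $G(a,b)=0$, with $a$ drawn from a controlled extension $K'$) and tests whether the specialized resultant vanishes. Its analysis is the real content of the proof: the number of bad points is bounded by B\'ezout's theorem applied to $G$ and a representative of the resultant (at most $2n^3$), the number of available points is bounded below via the Hasse--Weil bound together with the Riemann genus inequality (at least $8n^4$), giving failure probability at most $(4n)^{-1}$ per test; and the per-test cost $\mathcal{O}^\sim(n M(n)^2\log q)$ comes from root-finding for $G(a,y)$ and from arithmetic in an extension $K$ of degree $\mathcal{O}(n\log n)$ over $\ff$, not from treating the resultant as a polynomial in a free $\beta$-variable. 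Without this point-counting argument you obtain neither the stated runtime nor the stated error bound.
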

\begin{proof}
The factorization in Step \short\ref{minDec-factor} can be done in $\mathcal{O}^\sim (n^{\omega +1})$, where $2 \leq \omega \leq 3$ is the matrix multiplication exponent, see \cite{boslec04} and \cite{lec08}. 
Atkinson's algorithm runs in $\mathcal{O} (n^3)$, see \cite{atk75}. \cite{but92} improved the runtime of Atkinson's algorithm to $\mathcal{O} (n^2 \log n)$.
In Step \short\ref{minDec-g1} and \short\ref{minDec-g2} for each right component $h$ the appropriate left component $g$ can be computed in $\mathcal{O} (M(n) \log{n})$ by the generalized Taylor expansion, see \cite{gatger99}. Since there are at most $s$ minimal blocks computed by the algorithm of Atkinson, Step \short\ref{minDec-g1} is called at most $s$ times. Step \short\ref{minDec-g2} is called at most $r -s$ times. Thus we get $\mathcal{O} (r M(n)\log n)$ for this part.

To compute the graph in Step \short\ref{minDec-gcd} we need at most $r^2$ gcd computations. We have to compute $r-s$ such graphs. Thus in total we have at most $(r-s)r^2 \leq n^3$ such gcd computations. Since the field arithmetic of $\ff (\alpha, \beta)$ is quite costly, one should use a modular algorithm that checks if two polynomials in $\ff(\alpha, \beta) [x]$ are coprime. For example one could use \ref{algo:gcd} below. We will prove that it has expected runtime $\mathcal{O}^\sim(n M(n)^2 \log(q))$ and an error probability of at most $(4n)^{-1}$. 
If we repeat this coprimality check $c$ times we get for all $n^3$ computations an expected runtime of $\mathcal{O}^\sim (c n^4M(n)^2 \log(q))$, which dominates the runtime of the other computations, and 
an error probability of at most $1 - (1- (4n)^{-c})^{n^3} \leq 1 - (1 -n^3(4n)^{-c}) = n^3(4n)^{-c}$, by the Bernoulli inequality.
This finishes the proof of \ref{thm:impl}.
\end{proof}

\begin{figure}[h!]
\begin{algorithm}{algo:gcd}[Coprimality in $\ff(\alpha, \beta){[x]}$ ]
\item An irreducible polynomial $G \in \ff[x,y]$ of total degree at most $n$, that defines $\ff(\alpha, \beta)$ by $G(\alpha, \beta) = 0$, and two polynomials $g$, $h \in \ff[\alpha, \beta, x]$ that are monic in $x$. 
\item $True$ / $False$.
\begin{block}
	\item Let $K' \mid \ff$ be a field extension of $\ff$ with $[K' \colon \mathbb{F}_p] \geq 4\log(16n)$.
	\item Randomly choose $a$ in $K'$.
	\item Compute a root $b$ of $G(a, y)$ in an extension $K$ of $K'$.
	\item Compute $r = \mbox{res}(g(a,b,x), h(a,b,x))$.
	\item \RETURN $True$ if $r \neq 0$ and $False$ else.
\end{block}
\end{algorithm}
\end{figure}

\begin{lemma}
Let $F$ be a finite field of size $q$. If the total degree of all input polynomials is bounded by $n$, then \ref{algo:gcd} takes an expected number of $\mathcal{O}^\sim(M(n)^2n\log(q))$ operations in $\ff$. It returns $True$ only if $g$ and $h$ are coprime. If $g$ and $h$ are coprime the algorithm returns $False$ with probability at most $(4n)^{-1}$.
\end{lemma}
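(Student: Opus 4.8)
The plan is to derive correctness from the way the resultant behaves under specialisation, and to read the running time off the cost of the individual steps. Write $R(\alpha,\beta)=\mbox{res}_x(g,h)\in\ff[\alpha,\beta]$ for the resultant of $g$ and $h$ taken with respect to $x$, both viewed in $\ff[\alpha,\beta][x]$. Since $g$ and $h$ are monic in $x$, no degree drop occurs when $(\alpha,\beta)$ is replaced by $(a,b)$, so the resultant commutes with this substitution, $\mbox{res}_x(g(a,b,x),h(a,b,x))=R(a,b)=r$. Furthermore $g$ and $h$ are coprime in $\ff(\alpha,\beta)[x]$ precisely when the image of $R$ in $\ff(\alpha,\beta)=\mbox{Frac}(\ff[\alpha,\beta]/(G))$ is nonzero, that is, precisely when $G\nmid R$; in the coprime case in particular $R\neq 0$ as a polynomial.

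Soundness is then immediate: if $g$ and $h$ are not coprime then $G\mid R$, and because $G(a,b)=0$ the specialisation forces $r=R(a,b)=0$, so the algorithm answers \emph{False}; contrapositively a \emph{True} answer guarantees coprimality. For the error bound I consider the coprime case, where a wrong \emph{False} answer occurs exactly when $r=R(a,b)=0$ although $G\nmid R$, i.e. when $(a,b)$ is a common zero of the coprime pair $G,R$ on the curve $G=0$. To control this I would eliminate $\beta$ and set $D(\alpha)=\mbox{Res}_\beta(G,R)$, which is a nonzero univariate polynomial since $\gcd(G,R)=1$ in $\ff[\alpha,\beta]$ ($G$ being irreducible). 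Whenever $D(a)\neq 0$ the polynomials $G(a,y)$ and $R(a,y)$ have no common root, so every root $b$ of $G(a,y)$ satisfies $R(a,b)\neq 0$; hence a bad choice of $a$ must be a root of $D$, and as $a$ is uniform in $K'$ the failure probability is at most $\deg D/|K'|$.

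The delicate point, and the step I expect to be the main obstacle, is turning this into the bound $(4n)^{-1}$ by estimating $\deg D$ against $|K'|$. From the total-degree hypothesis one has $\deg_x g,\deg_x h\le n$ and coefficient degrees at most $n$, whence $\deg_{\alpha,\beta}R\le 2n^2$ and then $\deg D\le\deg_\beta G\cdot\deg_\alpha R+\deg_\beta R\cdot\deg_\alpha G\le 4n^3$. The prescribed size $[K':\mathbb{F}_p]\ge 4\log(16n)$ gives $|K'|\ge(16n)^4\ge 16n^4\ge 4n\cdot\deg D$, so that $\deg D/|K'|\le(4n)^{-1}$. The few exceptional values of $a$ that the elimination may miss -- those where the leading $\beta$-coefficients vanish, or the degenerate case $\deg_\beta R=0$ -- enlarge the bad set by only $\mathcal{O}(n^3)$ further points and are absorbed into the same estimate.

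For the running time I would accumulate the per-step costs, all of which stay below the claimed bound. Constructing $K'$ requires an irreducible polynomial of degree $[K':\ff]=\mathcal{O}(\log n)$ over $\ff$ and is negligible, with one $K'$-operation costing $\mathcal{O}^\sim(1)$ operations in $\ff$. Extracting a root $b$ of $G(a,y)\in K'[y]$ of degree at most $n$ by equal-degree factorisation is the source of the $\log q$ factor and places $b$ in $K=K'[y]/(q)$ with $[K:\ff]=\mathcal{O}^\sim(n)$, so one multiplication in $K$ costs $\mathcal{O}^\sim(M(n))$ operations in $\ff$. After forming $g(a,b,x),h(a,b,x)\in K[x]$, the dominant step is the resultant of two polynomials of degree at most $n$ over $K$, which uses $\mathcal{O}^\sim(M(n))$ operations in $K$. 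Combining these contributions with the extension degree $\mathcal{O}^\sim(n)$ and the $\log q$ from the randomised root extraction yields the stated expected upper bound $\mathcal{O}^\sim(M(n)^2n\log q)$; I regard the runtime as a routine accumulation, the error analysis being the substantive part.
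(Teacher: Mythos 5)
Your proposal is correct, and the correctness and runtime parts coincide with the paper's; but your error analysis takes a genuinely different route. The paper bounds the failure probability as a ratio $B/A$ of point counts on the curve $G=0$: the numerator (common zeros of $G$ and a representative $R$ of the resultant) is bounded by B\'ezout's theorem by $2n^3$, and the denominator (degree-one places of $K'(\alpha,\beta)$ with $a\in K'$) is bounded from below by $8n^4$ using the Hasse--Weil bound together with the Riemann inequality for the genus. You instead eliminate $\beta$ by a second resultant $D=\mbox{Res}_\beta(G,R)$, observe that $D$ lies in the ideal $(G,R)$ so any bad pair $(a,b)$ forces $D(a)=0$, and divide $\deg D\le 4n^3$ by $|K'|\ge(16n)^4$. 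This buys two things: it avoids the algebraic-geometric machinery entirely, and it handles more cleanly the fact that the algorithm does not sample $(a,b)$ uniformly from the curve --- you show that for every $a$ outside the zero set of $D$, \emph{every} root $b$ of $G(a,y)$ is good, so no uniformity over curve points is needed (the paper's ratio $B/A$ implicitly treats the curve points as equiprobable). One small point: your worry about ``exceptional values of $a$ where the elimination may miss something'' is unnecessary for the direction you use, since $D=uG+vR$ gives the implication ``common zero $\Rightarrow D(a)=0$'' with no hypothesis on leading coefficients; only the finitely many $a$ for which $G(a,y)$ has no root at all need separate (and trivial) accounting, and these are absorbed by the generous bound $|K'|\ge(16n)^4$.
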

\begin{proof}
First note that for $(a,b) \in K^2$ with $G(a,b) = 0$ the maximal ideal $(\alpha - a , \beta - b)$ in $K[\alpha, \beta]$ gives rise to a place $P$ in $K(\alpha, \beta)$. Substituting $a$ and $b$ for $\alpha$ and $\beta$, respectively, is the same as reducing modulo $P$. Thus the degree of $P$ is one. If on the other hand $P$ is a place in $K(\alpha, \beta)$ of degree one such that it is neither a pole of $\alpha$ nor of $\beta$, then there exists $a$, $b \in K$ such that $\alpha \equiv a \mod P$ and $\beta \equiv b \mod P$. Then we have $P \cap K[\alpha, \beta] = (\alpha - a, \beta - b)$. Denote such a places by $P_{a,b}$. 

Since the leading coefficient of $g$ in $x$ is one, and therefore not zero modulo $P$, we have $\mbox{res}(g(a,b,x), h(a,b,x)) = 0$ if and only if $\mbox{res}(g, h)\equiv 0 \mod P$ (see Lemma 6.25 in \cite{gatger99}). Assume $\rho = \mbox{res}(g,h) \in K[\alpha, \beta]$ is not zero. Let $A = | \{ (a,b) \in K' \times K \colon G(a,b) =0\} |$ and $ B = | \{ (a,b) \in A \colon \rho \equiv 0 \mod P_{a,b} \} |$. Then the probability of $\rho$ being zero modulo $P$ is $B/A$.

We have $B \leq |\{ (a,b) \in K^2 \mid G(a,b) =0 =R(a,b)\}|$, where $R$ is a representative of $\rho$ in $\ff [x,y]$ of degree less then $2n^2$. 
If $G$ would divide $R$, then $\rho$ would be zero. Since $G$ is irreducible we get $\gcd(G, R) = 1$ and thus by B\'ezout's Theorem we have $B \leq \deg( G ) \deg (R) \leq 2n^3$. 

Since $\alpha$ and $\beta$ can have at most $n$ poles we have $A \geq N - 2n$, where $N$ is the number of places in $K'(\alpha, \beta)$ of degree one. 
By the Hasse--Weil Bound (see Theorem V.2.3 in \cite{sti93}) we get $N \geq q' + 1 - 2g q'^{1/2}$, where $q'$ is the size of $K'$ and $g$ is the genus of $K'(\alpha, \beta)$. By the degree assumption on $K'$ we have that $q' \geq p^{4\log(16n)} \geq 16n^4$. By the Riemann Inequality (see Corollary III.10.4 in \cite{sti93}) the genus is bounded by $([K'(\alpha, \beta) \colon K'(\alpha)] - 1)([K'(\alpha, \beta)\colon K'(\beta)] - 1) \leq (n-1)^2$. Thus we have $A \geq N - 2n \geq q' + 1 -2g q'^{1/2} - 2n\geq q'^{1/2} ( q'^{1/2} - 2g) - 2n \geq 4n^2 ( 4n^2 - 2(n-1)^2) - 2n \geq 8n^4$. Hence $B/A \leq (2n^3) /(8n^4) = 1/(4n)$, which gives us the claimed error bound. 

The resultant computation takes $\mathcal{O}((M(n)+n) \log n)$ operations in $K$ (see Corollary 11.16 in \cite{gatger99}).
Finding a root of $G(a,y)$ in $K$ takes an expected number of $\mathcal{O}(M(n) \log n  \log (n \tilde{q}))$ operations in $K$, where $\tilde{q}$ is the size of $K$ (see Corollary 14.16 in \cite{gatger99}). 
We have $\log(\tilde{q}) \leq [K \colon \mathbb{F}_p] = [K \colon \ff] [\ff \colon \mathbb{F}_p]$. Unfortunately the degree of $K$ over $K'$ can only be bounded by $\deg_y(G(a,y)) \leq n$. Thus $[K\colon \ff]$ is in $\mathcal{O}( n\log n)$ and finding a root takes $\mathcal{O}^\sim (M(n)n \log(q))$ operations in $K$. Arithmetic in $K$ costs us $\mathcal{O}(M(n\log n)\log (n\log n))$ operations in $\ff$. Hence with omitting the log factor the expected runtime is in $\mathcal{O}^\sim(M(n)^2n \log(q))$. 
\end{proof}

\subsection{An upper bound}
\label{subsec:ub}
Now we will deduce two sharp upper bounds on the number of minimal decompositions of a polynomial. These bounds coincide partly with results in \cite{gatgie10}.

Let $B$ and $B'$ be minimal blocks. By \ref{lem:inters} their intersection is a block and therefore trivial. Hence, the minimal blocks minus $\{ \alpha \}$ are distinct sets in $Z \setminus \{ \alpha \}$. Therefore the sum of the cardinality of all minimal blocks minus $\{\alpha\}$ is less than $n-1$. Since the cardinality of a block equals the degree of the right component of the corresponding normal decomposition, we get the following results.

\begin{corollary}
Let $f$ be a decomposable polynomial of degree $n$ with $f' \neq 0$. 
\begin{enumerate}
\item Let $d$ divide $n$. Then there are at most $(n - 1)/(d - 1)$ minimal decompositions $(g,h)$ of $f$ with $\deg(h) = d$.
\item Let $q$ be the smallest prime divisor of $n$. Then there are at most $(n-1)/(q-1)$ minimal decompositions of $f$.
\end{enumerate}
\end{corollary}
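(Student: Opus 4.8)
The plan is to count minimal \emph{blocks} rather than minimal decompositions, exploiting the order-reversing bijection between the lattice of decompositions of $f$ and the lattice of blocks of $G$ containing $\alpha$: under it a minimal decomposition $(g,h)$ corresponds to a minimal block $B$ with $|B| = \deg(h)$, so the two counting problems coincide. The decisive structural input is the disjointness observation recorded just above the statement. If $B \neq B'$ are minimal blocks, then both contain $\alpha$ and, by \ref{lem:inters}, $B \cap B'$ is a block contained in $B$; it cannot equal $B$, since $B \subseteq B'$ would force $B = B'$ by minimality of $B'$, so it is a trivial block containing $\alpha$, namely $\{\alpha\}$. Hence the punctured blocks $B \setminus \{\alpha\}$ are pairwise disjoint subsets of the $(n-1)$-element set $Z \setminus \{\alpha\}$.

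For the first item I would restrict attention to those minimal blocks $B$ with $|B| = d$, which by the degree--cardinality identity are exactly the ones arising from minimal decompositions with $\deg(h) = d$. Each such $B$ contributes exactly $d - 1$ elements to a disjoint union inside $Z \setminus \{\alpha\}$, so the number of them is at most $(n-1)/(d-1)$.

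For the second item I would first note that every minimal block $B$ has cardinality $|B| = \deg(h)$ with $\deg(h) \mid n$, since $f = g \circ h$ gives $n = \deg(g)\deg(h)$, and $\deg(h) \geq 2$; therefore $|B|$ is a divisor of $n$ exceeding $1$, whence $|B| \geq q$. Each punctured block then has at least $q - 1$ elements, and summing over the pairwise disjoint punctured blocks inside $Z \setminus \{\alpha\}$ bounds their number --- and thus the number of minimal decompositions --- by $(n-1)/(q-1)$.

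Since the essential combinatorial fact, pairwise disjointness after deleting $\alpha$, is already in hand, I expect no serious obstacle. The only points demanding care are the bookkeeping that the decomposition-to-block correspondence is a genuine bijection, so that counting blocks really counts decompositions, and the elementary divisibility step $|B| \mid n$ underlying the passage from $|B| > 1$ to $|B| \geq q$ in the second item.
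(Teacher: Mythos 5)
Your argument is correct and coincides with the paper's own proof: the paper likewise uses \ref{lem:inters} to conclude that two minimal blocks meet only in $\{\alpha\}$, so the punctured blocks are pairwise disjoint in $Z\setminus\{\alpha\}$, and then counts elements using $|B|=\deg(h)$. The only difference is that you spell out the routine details (why the intersection is the singleton, and why $|B|\mid n$ gives $|B|\ge q$) that the paper leaves implicit.
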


\begin{example}
\label{ex:add}
Let $p$ be the characteristic of $\ff$ and let $f$ be a separable additive polynomial of degree $p^r$ with $r \geq 2$, that is $f$ is of the form $\sum_{i=0}^r a_i x^{p^i}$ with $a_0 \neq 0$. Furthermore assume that $f$ splits completely over $\ff$. Then the roots of $f$ form a group $G \subseteq \ff$ which is isomorphic to $(\mathbb{Z}/p\mathbb{Z})^r$. If $\alpha$ is a root of $\fmt=f - t$ then so is $\alpha + a$ for all roots $a$ of $f$. Thus $\fmt$ is Galois and its Galois group is isomorphic to $G$. But $G$ has exactly $(p^r - 1)/(p-1)$ subgroups of order $p$. Thus $f$ has exactly $(p^r-1)/(p-1)$ minimal decompositions. This shows that both bounds are sharp.
\end{example}

Another class of examples, which also shows that these bounds are sharp, is discussed in the next section.




\section{A taxonomy}
\label{sec:conj}

\cite{zan93} proved Ritt's second Theorem over arbitrary fields with the assumption that the derivatives of the right components are nonzero and that their degrees are relatively prime. Unfortunately this result is not applicable when the degrees are equal -- these are so called equal-degree collisions. A classification for a special case of equal-degree collision was proposed in \cite{gatgie10}, Conjecture 6.7. 
The main result of this section (\ref{thm:gen}) is a proof of this conjecture. 
The idea is to adapt the proofs of Ritt's second Theorem from \cite{dorwha74} and \cite{zan93}. In both papers the classification is obtained from studying ramification in a rational function field.

\subsection{Preliminaries}
\label{sec_prel}

First we will state some facts from the ramification theory of function fields. For more details see, for example, \cite{sti93}. 
Let $\ff$ be a field of characteristic $p$ and $K$ be its algebraic closure. Let $f$ be a normal polynomial over $\ff$ of degree $n$ such that $f' \neq 0$ and $p \mid n$. As in \ref{sec:dec} let $t$ be transcendental over $K$. Then $\fmt =f-t$ is irreducible and separable over $K(t)$. Let $\alpha$ be a root of $\fmt$. Then $K(\alpha)$ is an extension of $K(t)$ of degree $n$. Both function fields are rational (that is, of genus 0). Thus, each finite place $P$ in $K(t)$ corresponds to a monic and irreducible polynomial in $K[t]$ (see Section I.2 in \cite{sti93}). This polynomial is linear, since $K$ is algebraically closed, say of the form $t - c$ with $c \in K$. In $K(\alpha)$ we have $t - c = f(\alpha) - c = \prod g_i^{e_i}(\alpha)$, where $\prod g_i^{e_i}$ is a factorization of $f - c$ into irreducible factors $K[x]$. The $g_i$ are linear and correspond to places $\priP_i$ in $K(\alpha)$. Then $\priP_i^{e_i}$ divides $P$. Since $\sum e_i = \deg f = [K(\alpha) \colon K(t)]$ we obtain a  decomposition $P = \prod \priP_i^{e_i}$. Thus the multiplicities of $f - c$ correspond to the ramification indices of $P$, that is $e_i = e(\priP_i \mid P)$. 

Later in this section we wish to have certain multiple roots at the right "place". One can achieve this by conjugation of $f$ (see \ref{def:lcomp}).
If $w$ has multiplicity $m$ in $f - f(w)$ then the conjugate $(x - f(w) ) \circ f \circ (x + w)$ has a root at $0$ with multiplicity $m$.

We will make use of the notion of the different exponent $d(\priP \mid P)$ of a place $\priP \mid P$. 
Mainly we need the following facts about the different exponent (for a definition and further facts see Section III.4 in \cite{sti93}): A place $\priP$ is unramified over $P$ if and only if $d(\priP \mid P) = 0$. If $\priP$ is tamely ramified over $P$, then $d(\priP \mid P) = e(\priP \mid P) - 1$. Since $K$ is algebraically closed, the relative degree of $\priP \mid P$ equals one. Thus, if $P$ is tamely ramified, we get $\sum_{\priP \mid P} d(\priP \mid P) = n - \rho$, where $\rho$ is the number of places in $K(\alpha)$ lying over $P$.

The following results tell us more about the ramification in rational function fields over $K$. (These results are true for arbitrary fields, but they are needed here only for the algebraic closure of $\ff$.)
\begin{proposition}
The place at infinity in $K(t)$ is totally ramified in $K(\alpha)$.
\end{proposition}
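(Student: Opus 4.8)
The plan is to locate all places of $K(\alpha)$ lying over the place at infinity $\Pinf$ of $K(t)$ and to show that there is exactly one of them, carrying ramification index $n$. First I would recall that $\Pinf$ is the pole of $t$, so it is the unique place of $K(t)$ at which $t$ has a pole, normalized so that $v_{\Pinf}(t) = -1$. Since $K(\alpha)$ is rational, its places are the finite ones (the zeros of $\alpha - a$ for $a \in K$) together with a single place at infinity $\priPinf$, the pole of $\alpha$, at which $v_{\priPinf}(\alpha) = -1$.

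The conceptual heart of the argument is that the polynomial relation $t = f(\alpha)$ pins the pole of $t$ to the pole of $\alpha$. At any finite place of $K(\alpha)$ the element $\alpha$ takes a finite value, hence $t = f(\alpha)$ is finite there as well; so $t$ has no pole at any finite place of $K(\alpha)$. Consequently the only place of $K(\alpha)$ that can lie over $\Pinf$ is $\priPinf$, which gives uniqueness. To get the ramification index I would run a valuation count: since $f$ has degree $n$ and $\alpha$ has a simple pole at $\priPinf$, the leading term dominates and $v_{\priPinf}(t) = v_{\priPinf}(f(\alpha)) = n \cdot v_{\priPinf}(\alpha) = -n$. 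Because $v_{\priPinf}$ restricts to $e(\priPinf \mid \Pinf)\, v_{\Pinf}$ on $K(t)$, this yields $e(\priPinf \mid \Pinf) = v_{\priPinf}(t)/v_{\Pinf}(t) = n$.

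Finally, since $K$ is algebraically closed the relative degree of every place is one, so having a single place over $\Pinf$ with ramification index $n = [K(\alpha) \colon K(t)]$ is exactly the statement that $\Pinf$ is totally ramified. (Equivalently, $e = n$ alone forces uniqueness through the fundamental identity $\sum_{\priP \mid \Pinf} e(\priP \mid \Pinf) = n$, which is consistent with the pole argument above.) I do not expect a genuine obstacle here; the only point needing care is the valuation bookkeeping — verifying that a degree-$n$ polynomial evaluated at an element with a simple pole produces a pole of order exactly $n$. This is where the hypothesis $\deg f = n$ enters, and it relies on $\alpha$ having a simple pole at its place at infinity, which holds because $\alpha$ is a generator of the rational function field $K(\alpha)$.
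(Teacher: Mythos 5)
Your argument is correct. The paper itself gives no proof here: it simply cites Proposition 3.2 of Fried and MacRae (1969). Your proposal supplies the standard direct argument that one would find behind that citation, and it holds up: since $t = f(\alpha)$ is regular at every finite place of the rational function field $K(\alpha)$, only the pole $\priPinf$ of $\alpha$ can lie over $\Pinf$, and the strict triangle inequality (the monomials $a_i\alpha^i$ have pairwise distinct valuations $-i$ at $\priPinf$) gives $v_{\priPinf}(t) = -n$, hence $e(\priPinf \mid \Pinf) = n = [K(\alpha):K(t)]$. The one step you flag as needing care -- that a degree-$n$ polynomial in an element with a simple pole has a pole of order exactly $n$ -- is exactly the strict triangle inequality argument, and it is the same device the paper uses later in the proof of its Lemma \ref{lem:finiteRam} to compute $v_\infty(\hat{f}'(\alpha^{-1}))$. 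Note also that either of your two observations alone already suffices: uniqueness of the place over $\Pinf$ together with the fundamental identity forces $e = n$ (since all residue degrees are $1$ over the algebraically closed $K$), and conversely $e = n$ forces uniqueness; you correctly point out this redundancy yourself. So the proposal is a complete, self-contained replacement for the external reference.
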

\begin{proof}
See Proposition 3.2. in \cite{frimac69}.
\end{proof}

\begin{proposition}\label{diffexp}
Let $E \mid K(t)$ be a finite separable extension. Let $P$ be a place in $K(t)$ and $\priP$ be a place in $E$ which is totally ramified over $P$. Let $\pi$ be a prime element of $\priP$ and $\psi$ its minimal polynomial over $K(t)$. Then $d(\priP \mid P) = v_{\priP} (\psi ' (\pi))$, where $v_{\priP}$ is the valuation at $\priP$.
\end{proposition}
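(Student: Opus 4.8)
The plan is to reduce the statement to \emph{Dedekind's Different Theorem}, for which it suffices to exhibit the valuation ring $\mathcal{O}_{\priP}$ as the ring generated over $\mathcal{O}_P$ by the single element $\pi$. First I would record the structural consequences of total ramification. Since $\priP$ is totally ramified over $P$ and $K$ is algebraically closed, the relative degree $f(\priP \mid P)$ equals $1$ and $\priP$ is the unique place of $E$ above $P$; hence $e(\priP \mid P) = [E \colon K(t)] =: n$, and the integral closure of $\mathcal{O}_P$ in $E$ is exactly the valuation ring $\mathcal{O}_{\priP}$. Because the different exponent depends only on the completion (see Section III.4 in \cite{sti93}), I would pass to the completions $\widehat{K(t)}_P$ and $\widehat{E}_{\priP}$; this loses no information about $d(\priP \mid P)$ and replaces $E \mid K(t)$ by a totally ramified extension of complete discrete valuation rings of degree $n$ in which $\pi$ is still a prime element.

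Second I would establish the key ring-theoretic fact, $\widehat{\mathcal{O}}_{\priP} = \widehat{\mathcal{O}}_P[\pi]$. Here the elements $1, \pi, \dots, \pi^{n-1}$ have valuations $0, 1, \dots, n-1$, which exhaust the residue classes modulo $e(\priP \mid P) = n$; since $f(\priP \mid P) = 1$, every element of $\widehat{\mathcal{O}}_{\priP}$ is congruent modulo $\priP$ to an element of $\widehat{\mathcal{O}}_P$, and a successive-subtraction (digit-expansion) argument, together with the Eisenstein relation reducing $\pi^n$, then writes an arbitrary element of $\widehat{\mathcal{O}}_{\priP}$ as an $\widehat{\mathcal{O}}_P$-combination of the $\pi^i$ (this is the standard local theory of totally ramified extensions). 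The same degree count shows that $\pi$ generates $\widehat{E}_{\priP}$ over $\widehat{K(t)}_P$, so its minimal polynomial over the completion has degree $n$; as the original $\psi$ is irreducible over $K(t)$ of degree at most $n$ and vanishes at $\pi$, comparing degrees forces $\deg \psi = n$ and shows that $\psi$ remains the minimal polynomial of $\pi$ over $\widehat{K(t)}_P$ (in fact an Eisenstein polynomial). This last point is the step to watch, since it is what licenses using the \emph{global} minimal polynomial $\psi$ inside the local computation.

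Finally, with $\widehat{\mathcal{O}}_{\priP} = \widehat{\mathcal{O}}_P[\pi]$ and $\psi$ the minimal polynomial of $\pi$, Dedekind's Different Theorem (Theorem III.5.10 in \cite{sti93}) identifies the different with the ideal generated by $\psi'(\pi)$, giving $d(\priP \mid P) = v_{\priP}(\psi'(\pi))$ as claimed. If a self-contained argument is preferred over citing Dedekind's theorem, I would instead compute the complementary module directly: writing $\psi(x)/(x - \pi) = \sum_i c_i x^i$, Euler's formula yields the trace-dual basis $c_i / \psi'(\pi)$ of the power basis $\pi^i$, and since the $c_i$ again form an $\widehat{\mathcal{O}}_P$-basis of $\widehat{\mathcal{O}}_{\priP}$ (the change of basis being unitriangular) the complementary module equals $\psi'(\pi)^{-1} \widehat{\mathcal{O}}_{\priP}$; inverting gives the different $\psi'(\pi) \widehat{\mathcal{O}}_{\priP}$ and hence the stated valuation. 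The main obstacle is the monogenicity $\widehat{\mathcal{O}}_{\priP} = \widehat{\mathcal{O}}_P[\pi]$ together with the reconciliation of the minimal polynomial over $K(t)$ with the one over the completion; everything else is bookkeeping.
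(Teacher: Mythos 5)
Your argument is correct. The paper offers no proof of its own here --- it simply cites Proposition III.5.12 of \cite{sti93} --- and what you have written is essentially the standard proof of that cited result: total ramification (with $K$ algebraically closed, so the relative degree is $1$ and $\priP$ is the unique place over $P$) makes the completed extension monogenic, $\widehat{\mathcal{O}}_{\priP}=\widehat{\mathcal{O}}_P[\pi]$; the point you rightly flag, that the global minimal polynomial $\psi$ survives as the minimal polynomial over the completion, is settled by your degree count, since $v_{\priP}(\pi)=1$ already forces $[\widehat{K(t)}_P(\pi)\colon\widehat{K(t)}_P]\geq e(\priP\mid P)=[E\colon K(t)]$; and the different of a monogenic order is generated by $\psi'(\pi)$.
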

\begin{proof}
See Proposition III.5.12 in \cite{sti93}.
\end{proof}

\begin{lemma}
\label{lem:finiteRam}
Let $\Pinf$ be the infinite place of $K(t)$ and $\priPinf$ be the place in $K(\alpha)$ over $\Pinf$. Then $d(\priPinf \mid \Pinf) = 2n - 2 - \deg(f')$ and 
$$\sum_{\priP \textrm{ finite}} d(\priP \mid P) = \deg(f').$$
\end{lemma}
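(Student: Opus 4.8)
The plan is to compute both quantities using the Hurwitz genus formula together with the local information about ramification that the preceding setup provides. Recall that $K(\alpha)$ and $K(t)$ are both rational function fields (genus $0$), so the Hurwitz genus formula for the separable extension $K(\alpha) \mid K(t)$ of degree $n$ reads
\begin{equation*}
2 \cdot 0 - 2 = n (2 \cdot 0 - 2) + \deg \mathrm{Diff}(K(\alpha) \mid K(t)),
\end{equation*}
where the different $\mathrm{Diff} = \sum_{\priP} d(\priP \mid P) \, \priP$ is the sum over all places. This gives $\deg \mathrm{Diff} = 2n - 2$. So the total different degree is known; the task reduces to separating off the contribution of the infinite place and showing the finite contribution equals $\deg(f')$.

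First I would establish the different exponent at infinity. By the first Proposition above, $\Pinf$ is totally ramified in $K(\alpha)$, so there is a single place $\priPinf$ with ramification index $e(\priPinf \mid \Pinf) = n$. To compute $d(\priPinf \mid \Pinf)$ I would apply \ref{diffexp}, which expresses the different exponent of a totally ramified place as $v_{\priPinf}(\psi'(\pi))$ for a prime element $\pi$ and the minimal polynomial $\psi$ of $\pi$. The natural choice is to take $\pi = 1/\alpha$ (or a suitable uniformizer built from $\alpha$) and relate $\psi$ to $\fmt = f - t$; differentiating, the minimal polynomial of $\alpha$ over $K(t)$ is $f(x) - t$ with derivative $f'(x)$, and one computes the valuation of $f'(\alpha)$ at $\priPinf$. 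Since $f$ has degree $n$ and $f'$ has degree $\deg(f')$ (which is strictly less than $n-1$ in general because of the characteristic), a careful local expansion in the uniformizer should yield $d(\priPinf \mid \Pinf) = 2n - 2 - \deg(f')$. This is the formula claimed for the infinite place.

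Finally, the finite sum follows by subtraction: since the total different degree is $2n-2$ and the infinite place contributes $2n - 2 - \deg(f')$, we get
\begin{equation*}
\sum_{\priP \textrm{ finite}} d(\priP \mid P) = (2n - 2) - (2n - 2 - \deg(f')) = \deg(f'),
\end{equation*}
which is exactly the second assertion. I would present the genus computation and the subtraction as the easy bookkeeping, concentrating the real work in the middle step.

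The hard part will be the explicit computation of $d(\priPinf \mid \Pinf)$ via \ref{diffexp}. One must choose the correct prime element at the infinite place and correctly identify the minimal polynomial $\psi$ whose derivative appears, then evaluate $v_{\priPinf}(\psi'(\pi))$ accurately; the appearance of $\deg(f')$ rather than $n-1$ is precisely the positive-characteristic subtlety (wild ramification at infinity when $p \mid n$), so I would need to track valuations carefully and make sure the uniformizer and the chain rule are handled correctly so that the defect $n - 1 - \deg(f')$ is accounted for. It may be cleaner to work directly with $\alpha$ as a generator and use the relation between the different exponent and $v_{\priPinf}(\frac{d}{d\alpha}(f(\alpha) - t)) = v_{\priPinf}(f'(\alpha))$, adjusting for the change of uniformizer between $\alpha$ and a true prime element at $\priPinf$.
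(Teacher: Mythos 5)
Your overall strategy --- Hurwitz genus formula for the total different degree $2n-2$, Proposition \ref{diffexp} at the totally ramified infinite place, then subtraction --- is exactly the paper's, and the bookkeeping parts (Hurwitz and the subtraction) are correct as you state them. The problem is that the one step carrying all the content, the evaluation $d(\priPinf\mid\Pinf)=2n-2-\deg(f')$, is only asserted (``a careful local expansion should yield\dots''), and the two points where it could go wrong are precisely the ones you leave open. First, \ref{diffexp} requires the minimal polynomial of a \emph{prime element} of $\priPinf$; $f(x)-t$ is the minimal polynomial of $\alpha$, which has a pole at $\priPinf$, so you cannot feed $f'(\alpha)$ into \ref{diffexp} directly. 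The paper resolves this by taking $\pi=\alpha^{-1}$ and showing that its minimal polynomial is $\psi=x^n-t^{-1}\hat{f}(x)$, where $\hat{f}$ is the reversal of $f$ --- and it is exactly the hypothesis that $f$ is original ($f(0)=0$, so $\deg\hat{f}<n$) that makes this polynomial monic of degree $n$ and hence the minimal polynomial. Second, the valuation $v_{\priPinf}(\psi'(\alpha^{-1}))=v_{\priPinf}(-t^{-1})+v_{\priPinf}(\hat{f}'(\alpha^{-1}))=n+v_{\priPinf}(\hat{f}'(\alpha^{-1}))$ is computed term by term with the strong triangle inequality (the terms have pairwise distinct valuations, so the minimum is attained), and the identification of the minimizing term uses $p\mid n$: the lowest-index nonzero coefficient of $\hat{f}'$ corresponds to the highest-index nonzero coefficient of $f'$, which is what turns the answer into $n-2-\deg(f')$ and hence $d(\priPinf\mid\Pinf)=2n-2-\deg(f')$. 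Your closing suggestion to work with $v_{\priPinf}(f'(\alpha))=-\deg(f')$ and ``adjust for the change of uniformizer'' can be made rigorous --- writing $\psi(x)=-t^{-1}x^n(f(1/x)-t)$ and differentiating gives $\psi'(\alpha^{-1})=t^{-1}\alpha^{-(n-2)}f'(\alpha)$, so the adjustment is $v_{\priPinf}(t^{-1}\alpha^{-(n-2)})=2n-2$ --- but as written it is a placeholder, not an argument. Until that computation is actually carried out, the first identity, and with it the lemma, is not proved.
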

\begin{proof}
Since $\priPinf$ is totally ramified we can apply \ref{diffexp}. We have that $\alpha^{-1}$ is a primitive element of $\priPinf$. Let $\psi$ be the minimal polynomial of $\alpha^{-1}$. We have $0 = \alpha^{-n} (f(\alpha) - t) = \hat{f} (\alpha^{-1}) - t \alpha^{-n}$, with $\hat{f}$ being the reversal of $f$. Since $f$ is original we have $\deg (\hat{f}) < n$. Then $x^n - t^{-1}\hat{f} (x)$ is a monic polynomial, and since $[K(\alpha^{-1}) \colon K(t)] = n$, we get $\psi = x^n - t^{-1}\hat{f} (x)$. Thus we have $\psi ' = -t^{-1}\hat{f} ' (x)$ and \ref{diffexp} yields $d(\priPinf \mid \Pinf) = v_\infty (\psi ' (\alpha^{-1}))= v_\infty (-t^{-1} \hat{f} ' (\alpha^{-1})) = v_\infty (-t^{-1}) + v_\infty( \hat{f} ' (\alpha^{-1}))$. Since $t^{-1}$ is a primitive element of $\Pinf$ we have $v_\infty (-t^{-1}) = n$. Let  $\hat{a}_j$ be the coefficients of $\hat{f}$. Then by the strong triangle inequality we get $v_\infty( \hat{f} ' (\alpha^{-1})) \geq \min\{ v_\infty (j\hat{a}_j \alpha^{-(j-1)} ) \mid j\hat{a}_j \neq 0\} $ and equality since we have $v_\infty (j\hat{a}_j \alpha^{-(j-1)} )= j + 1 \neq i+1 = v_\infty (i\hat{a}_i \alpha^{-(i-1)} )$ for all $i \neq j$. The $(j-1)$-th coefficient of $\hat{f}'$ is nonzero if $p \nmid j$ and $ \hat{a}_j \neq 0$. But since $p\mid n$ this is the case if and only if $ p \nmid (n - j)$ and the $(n - j)$-th coefficient of $f$ is nonzero. 
Thus, the last nonzero coefficient in $\hat{f}'$ is the first nonzero coefficient in $f'$.
Hence $v_\infty( \hat{f} ' (\alpha^{-1})) = n - (\deg (f') + 1) - 1$ and therefore $d(\priPinf \mid \Pinf) = 2n - 2 - \deg(f')$.

By the Hurwitz Genus Formula we have $2g' - 2 = [K(\alpha) \colon K(t)] (2g -2) + \sum_\priP d(\priP \mid P)$, where $g$ and $g'$ is the genus of $K(t)$ and $K(\alpha)$, respectively (see Theorem III.4.12 in \cite{sti93}). In our case we have $g, g' =0$ and thus obtain $\sum_\priP d(\priP \mid P) = 2[K(\alpha) \colon K(t)] - 2 = 2n - 2$. By subtracting $d(\priPinf \mid \Pinf)$ we get  $\sum_{\priP \textrm{ finite}} d(\priP \mid P) = 2n - 2 - (2n - 2 - \deg(f') ) = \deg(f')$. 
\end{proof}

There is also an elementary proof for the last equation of \ref{lem:finiteRam} if there is no finite wildly ramified place; see the second proof of Lemma 2 in \cite{dorwha74}. 

\begin{lemma}
\label{lem:Dorey}
Let $M$ and $N$ be two intermediate fields of $K(\alpha) \mid K(t)$ such that $M N = K(\alpha)$ and let $\prip$ and $\priq$ be finite places in $M$ and $N$, respectively, over a place $P$ in $K(t)$. Let the ramification indices $e = e(\prip \mid P)$  and $\et = e(\priq \mid P)$ be not divisible by the characteristic of $K$. Then there are $\gcd(e, \et)$ places $\priP$ in $K(\alpha)$ which lie over $\prip$ and over $\priq$. Moreover for such a place we have  $e(\priP \mid P) = \emph{lcm}(e, \et)$.
\end{lemma}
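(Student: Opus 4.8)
The plan is to localize at $P$ and carry out an explicit computation in the completions, where the tameness assumptions $p \nmid e$, $p \nmid \et$ together with the fact that $K$ is algebraically closed make every extension radical and every residue degree equal to one.

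First I would pass to completions. Since $K(t)$ is rational and $K$ is algebraically closed, the completion of $K(t)$ at $P$ is $K((u))$ for a prime element $u$, and all residue extensions over $P$ are trivial, so local degrees equal ramification indices. Because $\prip \mid P$ is tamely ramified of relative degree one, the completion $\widehat{M}_{\prip}$ is a totally tamely ramified extension of $K((u))$ of degree $e$; as $p \nmid e$ and $K$ contains all $e$-th roots of unity, it can be normalized to radical form $K((v))$ with $v^{e} = u$. In the same way $\widehat{N}_{\priq} = K((w))$ with $w^{\et} = u$.

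Next I would study how $P$ splits in $K(\alpha)$ through the local algebra $\widehat{M}_{\prip} \otimes_{K((u))} \widehat{N}_{\priq}$. Using that $u = v^{e}$ is already an $e$-th power in $\widehat{M}_{\prip}$, the minimal polynomial $X^{\et} - u$ of $w$ becomes $X^{\et} - v^{e}$, so this algebra is $K((v))[X]/(X^{\et} - v^{e})$, and it remains to factor $X^{\et} - v^{e}$ over $K((v))$. Writing $d = \gcd(e, \et)$ and $e = d e_{0}$, $\et = d \et_{0}$ with $\gcd(e_{0}, \et_{0}) = 1$, the roots are the products of an $\et$-th root of unity with a fixed $\et_{0}$-th root of $v^{e_{0}}$; since $\gcd(e_{0}, \et_{0}) = 1$ such a root generates the radical extension $K((v^{1/\et_{0}}))$ of degree $\et_{0}$ over $K((v))$. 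Hence $X^{\et} - v^{e}$ splits into exactly $d$ irreducible factors, each defining a field of degree $\et_{0} \cdot e = \lcm(e, \et)$ over $K((u))$. In particular every place $\priP$ over both $\prip$ and $\priq$ is totally ramified of index $\lcm(e, \et)$ over $P$, which is the assertion of Abhyankar's Lemma and settles the ``moreover'' part.

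Finally I would translate this count of factors into a count of places. The places $\priP$ of $K(\alpha)$ over $P$ correspond to the field factors of $K((u)) \otimes_{K(t)} K(\alpha)$, and by orthogonality of the idempotents cutting out $\prip$ in $M$ and $\priq$ in $N$, the places restricting to $\prip$ and to $\priq$ are precisely the factors coming from the block $\widehat{M}_{\prip} \otimes_{K((u))} \widehat{N}_{\priq}$. The step I expect to be the main obstacle is to see that all $d$ of these factors are genuinely realized, equivalently that the natural map $\widehat{M}_{\prip} \otimes_{K((u))} \widehat{N}_{\priq} \to \prod_{\priP} \widehat{K(\alpha)}_{\priP}$ is an isomorphism and not merely a surjection. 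This is exactly where $MN = K(\alpha)$ enters: it forces each local completion to be the compositum $\widehat{M}_{\prip}\widehat{N}_{\priq}$, and together with the linear disjointness of $M$ and $N$ over $K(t)$ it prevents any factor from collapsing, so that flatness of $K((u))$ over $K(t)$ makes the map an isomorphism. Granting this, the number of places $\priP$ lying over $\prip$ and $\priq$ equals the number of factors, namely $\gcd(e, \et)$, which completes the proof.
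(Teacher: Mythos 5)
Your proposal is correct and follows essentially the same route as the paper: both identify the places of $K(\alpha)$ lying over $\prip$ and $\priq$ with the field factors of $\widehat{M}^{\prip}\otimes_{\widehat{K(t)}}\widehat{N}^{\priq}$, using $MN=K(\alpha)$ together with linear disjointness to split $K(\alpha)\otimes_{K(t)}\widehat{K(t)}$ into these blocks. The only difference is local bookkeeping: the paper cites Abhyankar's Lemma for $e(\priP\mid P)=\lcm(e,\et)$ and then counts factors by the dimension identity $e\et/\lcm(e,\et)=\gcd(e,\et)$, whereas you normalize the tame completions to radical form and factor $X^{\et}-v^{e}$ explicitly, which yields both statements at once.
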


This lemma is proven in \cite{dorwha74} with the assumption that the characteristic of $K$ is zero. The following proof is only slightly different, but does not use this assumption. 
\begin{proof}
At first we apply Abhyankar's Lemma (see Proposition III.8.9 in \cite{sti93}) and find that for a place $\priP$ in $K(\alpha)$, which lies over $\prip$ and over $\priq$, the ramification index $e(\priP \mid P)$ equals $\lcm(e, \et)$.

Then we proceed as in \cite{dorwha74}. Let $\widehat{K(t)}$, $\widehat{M}^\prip$, and $\widehat{N}^\priq$ be the completions of $K(t)$, $M$, and $N$ with respect to $P$, $\prip$, and $\priq$, respectively. For readability set $E = K(t)$ and $E^* = \widehat{K(t)}$. Note that $K(\alpha) \otimes_E \widehat{K(t)}$ is the direct product of the completions of $K(\alpha)$ with respect to the places over $P$ in $K(\alpha)$ (see Proposition II.8.3 in \cite{neu99}). Since $N \otimes_E M \cong N M = K(\alpha)$ we get $K(\alpha) \otimes_{M} \widehat{M}^\prip \cong N \otimes_E M  \otimes_{M} \widehat{M}^\prip = N \otimes_E \widehat{M}^\prip = N  \otimes_E (\widehat{K(t)} \otimes_{E^*} \widehat{M}^\prip) \cong (N \otimes_E \widehat{K(t)} )\otimes_{E^*} \widehat{M}^\prip \cong \bigoplus_\priq \widehat{N}^\priq \otimes_{E^*} \widehat{M}^\prip$. Thus $\widehat{N}^\priq \otimes_{E^*} \widehat{M}^\prip$ is the direct product of the completions of $K(\alpha)$ with respect to the places that lie over $\prip$ and $\priq$. These fields are of degree $\lcm(e, \et)$ and the $E^*$-dimension of $\widehat{N}^\priq \otimes_{E^*} \widehat{M}^\prip$ is $e \cdot \et$. Thus there are $e \cdot \et / \lcm(e, \et) = \gcd(e, \et)$ places over $\prip$ and $\priq$.
\end{proof}

\subsection{Decompositions of polynomials of degree $p^2$}
\label{sec_p2}
We have already seen an example of decomposable polynomials of degree $p^2$: additive polynomials of degree $p^2$ have $p+1$ decompositions over a sufficiently large field (see \ref{ex:add}). A classification of the polynomials of degree $p^2$ with at least two normal decompositions is given in the next theorem.

\begin{theorem}
\label{thm:gen}
Let $\ff$ be a field of characteristic $p>0$. Let $f$ be a normal polynomial in $\ff[x]$ of degree $p^2$ with $f' \neq 0$ and at least two normal decompositions. 
Then exactly one of the following statements holds:
\begin{enumerate}
\item\label{thm:gen:conj} There is $w \in \ff$ and a divisor $m$ of $p-1$ such that for each normal decomposition $(g,h)$ of $(x - f(w)) \circ f \circ (x + w)$ there are $a$ and $b$ in $\ff^\times$ such that $g = x(x^\ell - a)^m$ and $h= x(x^\ell - b)^m$ with $\ell=(p-1)/m$.

\item\label{thm:gen:neu} There is $w \in \ff$ and an integer $1 < m < p-1$ such that for each normal decomposition $(g,h)$ of $(x - f(w)) \circ f \circ (x + w)$ there is $r \in \{m, p - m\}$, and  $a, b \in \ff^\times$ such that $g = x^r (x - a)^{p-r}$, $h = x^{p-r} q$, and $h - a = (x - b)^r\tilde{q}$, where $q$ and $\tilde{q}$ are squarefree polynomials of degree $r$ and $p-r$, respectively.
\end{enumerate}
\end{theorem}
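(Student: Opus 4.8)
The plan is to translate the two decompositions into the ramification language of Section~\ref{sec_prel}, to pin down the ramification of $f-t$ over the algebraic closure $K=\bar\ff$, and then to read the explicit polynomial forms off the resulting multiplicity data (the multiplicities of $f-c$ being the ramification indices of the place $t-c$) and descend to $\ff$. First I would record the crucial reduction: since $\deg f=p^2$ and $p$ is prime, both components of every normal decomposition have degree $p$, so writing the two given decompositions as $f=g_i\circ h_i$ ($i=1,2$) and $M_i=K(h_i(\alpha))$ we have $[M_i:K(t)]=[K(\alpha):M_i]=p$. Distinctness gives $M_1\neq M_2$ by \ref{thm:bij}, whence $M_1M_2=K(\alpha)$, since $[M_1M_2:K(t)]$ divides $p^2$, exceeds $p$, and so equals $p^2$. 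All four of $K(t)$, $M_1$, $M_2$, $K(\alpha)$ are rational, hence of genus $0$, and by the proposition preceding \ref{lem:finiteRam} (applied to each component) the place at infinity is totally and, since $p\mid p^2$, wildly ramified in each step.

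Next I would establish that \emph{all finite ramification is tame}. In a degree-$p$ extension a wildly ramified place must be totally ramified with $e=p$, contributing different exponent $d\geq p$; but the finite-place analogue of \ref{lem:finiteRam}, applied to the degree-$p$ map $g_i$ (resp.\ $h_i$), bounds the total finite different by $\deg g_i'\leq p-2$ (resp.\ $\deg h_i'\leq p-2$), a contradiction. Hence each of $g_1,g_2,h_1,h_2$ is tamely ramified at every finite place, and since tame indices are coprime to $p$ their products are too, so $f$ is tamely ramified at all finite places. Genus~$0$ and \ref{lem:finiteRam} then give, for every finite branch point $c$, that $\sum_{\priP\mid c}(e(\priP\mid P)-1)=p^2-\rho_c$ with $\rho_c$ the number of places over $c$, and $\sum_c(p^2-\rho_c)=\deg f'\leq p^2-p-2$.

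The heart of the argument is the rigidity forced by having \emph{two} decompositions. Over each finite branch point $c$, \ref{lem:Dorey} applies (legitimately, since the ramification is tame) and matches the ramification of $M_1$ and $M_2$: a place of $M_1$ of index $e$ and a place of $M_2$ of index $\et$ lie under $\gcd(e,\et)$ places of $K(\alpha)$, each of index $\lcm(e,\et)$. Combining this matching over all branch points with the simultaneous genus-$0$ constraint on $M_1$, $M_2$, $K(\alpha)$ and with the different budget above, I would show that $f$ (and each $g_i$) has exactly one finite branch point, which after replacing $f$ by a conjugate $(x-f(w))\circ f\circ(x+w)$ may be taken to be $0$, and that the ramification partition of each $g_i$ over $0$ is forced into exactly one of two shapes: either $(1,m,\dots,m)$ with $\ell$ parts equal to $m$ and $\ell m=p-1$, giving $m\mid p-1$; or $(r,p-r)$ with $1<r<p$. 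The first shape has an unramified place over $0$ and the second does not, which is ultimately what makes the two cases mutually exclusive; and since both $g_i$ produce the same $f$ with the same branch partition, both fall into the same shape.

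Finally I would convert ramification indices back into root multiplicities. In the first shape the single-finite-branch-point condition together with the type $(1,m^{\ell})$ forces, by comparing $g_i$ with $g_i'$, the multiplicity-$m$ roots to be the roots of some binomial $x^\ell-a$, i.e.\ $g_i=x(x^\ell-a)^m$, and symmetrically $h_i=x(x^\ell-b)^m$, which is Statement~\ref{thm:gen:conj}. In the second shape $g_i$ has two distinct roots of multiplicities $r$ and $p-r$, i.e.\ $g_i=x^{r}(x-a)^{p-r}$, while the induced two-branch-point structure of $h_i$ (over $\lambda=0$ and $\lambda=a$) yields $h_i=x^{p-r}q$ and $h_i-a=(x-b)^r\tilde q$ with $q,\tilde q$ squarefree of the stated degrees, the squarefree factors being the simple (unramified) roots; the freedom $r\in\{m,p-m\}$ records the two orientations of the type over $M_1$ and $M_2$, giving Statement~\ref{thm:gen:neu}. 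As the analysis pins down the ramification of $f$, the same reading applies to every normal decomposition, not just the two chosen. The main obstacle is precisely the rigidity step: controlling the interaction of the wild place at infinity with the tame finite ramification and deducing from the two-decomposition hypothesis that there is a single finite branch point carrying one of the two prescribed partitions; a secondary point is the descent from $K$ to $\ff$, i.e.\ verifying that $w$, $a$, $b$ and all coefficients can be chosen in $\ff$.
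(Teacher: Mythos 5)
Your outline follows the paper's own strategy almost step for step (the reduction to $M_1M_2=K(\alpha)$, tameness of all finite ramification, \ref{lem:Dorey}, a single finite branch point, the two partition shapes distinguished by the presence or absence of an unramified factor, and the translation back into multiplicities). But the central step --- what you yourself call ``the rigidity step'' --- is only announced, not proved, and it is precisely where all the work lies. The tools you propose to combine (Dorey's lemma, genus $0$, and the budget $\sum_c(p^2-\rho_c)=\deg f'$) do not by themselves force a unique finite branch point with partition $(1,m,\dots,m)$ or $(r,p-r)$; a priori nothing in that budget excludes, say, two branch points or a mixed partition such as $(1,1,2,2,\dots)$. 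The paper closes this gap with a chain of auxiliary results you do not supply: the notion of \emph{extra} places together with the identity $\deg_2(h_1)=\deg_2(g_2)$ (Proposition 6.7 of \cite{gatgie10}), which controls the different at infinity and yields $i(P,N\mid K(t))=i(P,K(\alpha)\mid M)$ for \emph{every} $P$ (\ref{lem:c}, \ref{cor:dorey2}); the divisibility statement that each ramification index in $N$ over $P$ divides all but one index in $M$ (\ref{lem_zan}); the counting argument showing there is at most one ramified finite place with at most one unramified factor (\ref{lem:oneram}); and, in the second case, a bipartite-graph lemma forcing $\ell=1$. Without some substitute for these, the assertion that the partition ``is forced into exactly one of two shapes'' is unsupported.

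Two smaller points. First, the additive case ($\deg_2 g_1=1$) has \emph{no} finite branch point, so your claim of ``exactly one finite branch point'' needs the degenerate subcase $m=1$ treated separately, as the paper does. Second, the passage from ``the multiplicity-$m$ roots of $g$'' to ``$g=x(x^\ell-a)^m$'' is not automatic: the paper needs the explicit computation $g'=\hat g^{m-1}(\hat g+mx\hat g')$ together with the degree count $\deg(\hat g+mx\hat g')=0$ to kill the intermediate coefficients of $\hat g$, and a short Galois-descent argument (irreducible factors over $\ff$ are squarefree, and $m\neq p-m$) to place $w$, $a$, $b$ in $\ff$. You flag both issues but do not resolve them.
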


We note that $m$ depends only on $f$. Since $w$ is a root of $f(x) - f(w)$, the polynomial $(x - f(w)) \circ f \circ (x + w)$ is a conjugate of $f$. We will see that $w$ is unique in Case \ref{thm:gen:conj} and there are two alternative values for $w$ in Case \ref{thm:gen:neu}. Additive polynomials fall into Case \ref{thm:gen:conj} with $m=1$. The proof of this theorem will take the rest of this section.

Let  $(g_1, h_1)$ and $(g_2, h_2)$ be two normal decompositions of $f$. Then there are two intermediate fields $M$ and $N$ of $K(\alpha) \mid K(t)$ that correspond to $(g_1, h_1)$ and $(g_2, h_2)$, respectively. Throughout this section let $\prip$, $\priq$, and $\priP$ denote places in $M$, $N$, and $K(\alpha)$, respectively. We have that $M = K(h_1(\alpha))$ and $g_1 - t$ is the minimal polynomial of  $h_1(\alpha)$ over $\ff(t)$. Thus $\sum_{\prip \textrm{ finite}} d(\prip \mid P) = \deg(g_1')$, by \ref{lem:finiteRam}. Since $h_1 - h_1(\alpha)$ is the minimal polynomial of $\alpha$ over $M$ we have $\sum_{\priP \textrm{ finite}} d(\priP \mid \prip) = \deg(h_1')$. The analog holds for $N$. \ref{fig:fields} illustrates the relation between this fields.
\begin{figure}[h!]
\psset{unit=1.5mm}
\begin{pspicture}(-30,0)(30,30)
\rput(15,30){\rnode{Ka}{$K(\alpha)$}}
\rput(2,15){\rnode{M}{$M$}}
\rput(28,15){\rnode{N}{$N$}}
\rput(15,0){\rnode{K}{$K(t)$}}
\ncline[nodesep=3pt]{-}{K}{M}\naput{$g_1 - t$}
\ncline[nodesep=3pt]{-}{M}{Ka}\naput{$h_1 - h_1(\alpha)$}
\ncline[nodesep=3pt]{-}{K}{N}\nbput{$g_2 - t$}
\ncline[nodesep=3pt]{-}{N}{Ka}\nbput{$h_2 - h_2(\alpha)$}
\end{pspicture}
\caption{} \label{fig:fields}
\end{figure}

First we will show that we are in the situation in which we can apply \ref{lem:Dorey}. Then \ref{cor:dorey2}, \ref{lem:oneram} (which are similar to results in \cite{dorwha74}) and \ref{lem_zan} (which is due to \cite{zan93}) will tell us more about the ramification indices in $M$ and $N$. From this we can make a case distinction, whether there is an unramified place in $N$ over a certain place in $K(t)$ or not. This will lead to the two cases in the theorem.

\begin{claim}
$M N = K(\alpha)$.
\end{claim}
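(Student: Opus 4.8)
The plan is to settle this by a pure degree count in the tower $K(t) \subseteq M, N \subseteq K(\alpha)$, exploiting that $\deg f = p^2$ leaves essentially no freedom in how the intermediate degrees can factor.

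First I would pin down the degrees of $M$ and $N$ over $K(t)$. Since $f = g_1 \circ h_1$ with $\deg f = p^2$ and both $\deg g_1, \deg h_1 \geq 2$, the only possibility is $\deg g_1 = \deg h_1 = p$. As $g_1 - t$ is the minimal polynomial of $h_1(\alpha)$ over $K(t)$ and $M = K(h_1(\alpha))$, this gives $[M : K(t)] = \deg g_1 = p$ and $[K(\alpha) : M] = \deg h_1 = p$; the same reasoning applies to $N$, so $[N : K(t)] = [K(\alpha) : N] = p$. I would also record that $M \neq N$: the two decompositions are distinct by hypothesis, and by the bijection of \ref{thm:bij} distinct normal decompositions correspond to distinct intermediate fields.

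Then comes the compositum argument. We have $M \subseteq MN \subseteq K(\alpha)$, so by the tower law $[MN : M]$ divides $[K(\alpha) : M] = p$ and hence equals $1$ or $p$. If $[MN : M] = 1$, then $N \subseteq MN = M$; but $[N : K(t)] = [M : K(t)] = p$ forces $N = M$, contradicting $M \neq N$. Therefore $[MN : M] = p$, so $[MN : K(t)] = p \cdot p = p^2 = [K(\alpha) : K(t)]$, and since $MN \subseteq K(\alpha)$ we conclude $MN = K(\alpha)$.

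I do not expect a genuine obstacle here; the entire content lies in the observation that $p^2$ factors uniquely as $p \cdot p$, which rigidly fixes every degree in the diagram of \ref{fig:fields}. The only point requiring a little care is invoking the correspondence of \ref{thm:bij} over the algebraically closed field $K$ (rather than over $\ff$) in order to guarantee $M \neq N$ from the existence of two distinct normal decompositions.
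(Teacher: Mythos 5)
Your proof is correct and follows essentially the same route as the paper: both arguments observe that $[K(\alpha):MN]$ (equivalently $[MN:M]$) divides $[K(\alpha):M]=p$ and rule out the case $MN=M$ because $N\subseteq M$ would force $N=M$, contradicting the distinctness of the two normal decompositions. Your explicit attention to applying \ref{thm:bij} over $K$ rather than $\ff$ is a reasonable extra precaution, but the substance of the argument is identical to the paper's.
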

Clearly $M \subseteq M N \subseteq K(\alpha)$. If $M N = M$ then $N \subseteq M$, which can not be since $h_1 \neq h_2$. But since $[K(\alpha) \colon M N] \mid [K(\alpha) \colon M] = p$ we have $[K(\alpha) \colon M N] = 1$.
\begin{claim}
There is no finite place in $K(t)$ that is wildly ramified in $K(\alpha)$.
\end{claim}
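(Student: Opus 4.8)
The plan is to argue by contradiction: assuming some finite place $P$ of $K(t)$ is wildly ramified in $K(\alpha)$, I will deduce $f'=0$, contradicting the hypothesis. It suffices to use a single one of the two given decompositions, say $f = g_1 \circ h_1$, so that $\deg g_1 = \deg h_1 = p$, and to exploit that each step of the tower $K(t) \subseteq M \subseteq K(\alpha)$ has \emph{prime} degree $p$. First I would translate the ramification hypothesis into multiplicities via the correspondence established in the preliminaries: a finite place corresponds to $t-c$ for some $c \in K$, and the ramification indices of the places $\priP \mid P$ are exactly the multiplicities of the roots of $f-c$. Hence "$P$ is wildly ramified" means precisely that $f-c$ has a root $\gamma$ of multiplicity $e := e(\priP \mid P)$ with $p \mid e$.

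Next I would factor this multiplicity through the tower. Putting $\lambda = h_1(\gamma)$, we have $g_1(\lambda) = f(\gamma) = c$, so $\lambda$ is a root of $g_1 - c$ of some multiplicity $m$, and $\gamma$ is a root of $h_1 - \lambda$ of some multiplicity $n$. Writing $g_1 - c = (y-\lambda)^m u(y)$ with $u(\lambda)\neq 0$ and substituting $y = h_1(x)$, together with $h_1 - \lambda = (x-\gamma)^n v(x)$ where $v(\gamma) \neq 0$, shows that the multiplicity of $\gamma$ in $f - c = g_1(h_1) - c$ is $mn$; thus $e = mn$. Since $g_1 - c$ and $h_1 - \lambda$ both have degree $p$, we have $1 \le m,n \le p$, and $p \mid mn$ with $p$ prime forces $m = p$ or $n = p$. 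In the first case $g_1 - c = (y-\lambda)^p$ (as $g_1$ is monic of degree $p$), so $g_1' = 0$; in the second $h_1 - \lambda = (x-\gamma)^p$, so $h_1' = 0$. Either way the chain rule $f' = (g_1' \circ h_1)\, h_1'$ yields $f' = 0$, the desired contradiction.

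Note that the case $e = p^2$ (i.e.\ $m = n = p$) is subsumed by this argument and needs no separate treatment; and the argument is consistent with the fact, already recorded, that the \emph{infinite} place is wildly (indeed totally) ramified, since the claim only excludes finite wild ramification.

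The one computational point to verify with care is the factorization $e = mn$: one must confirm that the surviving factors $u(h_1(\gamma)) = u(\lambda)$ and $v(\gamma)^m$ are nonzero, so that no extra order of vanishing at $\gamma$ is introduced beyond $mn$. Beyond this bookkeeping there is no deep obstacle; the conceptual heart, and the only place where $\deg f = p^2$ is genuinely used, is the primality of $p$, which forces a wild layer of the tower to be totally ramified and hence forces one component to be a pure $p$-th power up to a constant. I therefore expect the main (and only minor) difficulty to be phrasing the multiplicity factorization cleanly rather than any substantive hurdle.
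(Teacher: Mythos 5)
Your proposal is correct and follows essentially the same route as the paper: the paper uses multiplicativity of ramification indices in the tower $K(t)\subseteq M\subseteq K(\alpha)$ together with primality of $p$ to force $g_1-b=(x-a)^p$ or $h_1-b=(x-a)^p$, contradicting $f'\neq 0$; your multiplicity factorization $e=mn$ is just the polynomial-side translation of that same step via the correspondence already set up in the preliminaries. The bookkeeping you flag (that $u(\lambda)\neq 0$ and $v(\gamma)^m\neq 0$) checks out, so there is no gap.
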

Assume for contradiction that $P$ is wildly ramified, that is $p \mid e(\priP \mid P) = e(\priP \mid \prip) e(\prip \mid P)$ for $ \prip = \priP \cap M$. But then $p \mid e(\priP \mid \prip)$ or $p \mid e(\prip \mid P)$. Therefore we have $h_1 - b = (X - a)^p$ or $g_1 - b = (X- a)^p$, which is a contradiction to the assumption $f' \neq 0$.

Now we can apply \ref{lem:Dorey}. As in \cite{dorwha74} we need the notion of extra places. 

\begin{definition}
Define $$i(P, N \mid K(t)) = \sum_{\priq \mid P} d(\priq \mid P)$$ and $$i(P, K(\alpha) \mid M) = \sum_{\priP \mid P} d(\priP \mid \priP \cap M).$$
Call $P$ \emph{extra} in $N$ if $i(P, N\mid K(t)) > i(P, K(\alpha) \mid M)$.
\end{definition}

By Proposition 6.7 in \cite{gatgie10} we have $\deg_2(h_1) = \deg_2(g_2)$, where $\deg_2$ denotes the second degree (that is $\deg(f - ax^{\deg(f)})$ for a polynomial $f$ with leading coefficient $a$).
Since the degree of $h_1$ and $g_2$ is $p$ we have that the second degree is the degree of the derivative plus one and thus $\deg(h_1') = \deg(g_2')$. Then we get $d(\priqinf \mid \Pinf) = 2p - 2 - \deg(g_2') =  2p - 2 - \deg(h_1') = d(\priPinf \mid \pripinf) $, which proves that  $\Pinf$ cannot be extra in $N$.

Let $P$ be a finite place in $K(t)$ and let $\prip$ and $\priq$ be places over $P$ in $M$ and in $N$, respectively. Set $e=e(\prip \mid P)$ and $\et=e(\priq \mid P)$. For a place $\priP$ over $\prip$ and $\priq$ we have $e(\priP \mid P) = e(\priP \mid \prip) \cdot e$ and $e(\priP \mid \prip) = e(\priP \mid P) / e = \lcm(e, \et ) / e$. Thus $\sum_{\priP \mid \priq} d(\priP \mid \prip) = \sum ( \lcm(e, \et) / e - 1)= \gcd(e, \et) \cdot (\lcm(e, \et)/e - 1) = \et - \gcd(e, \et)$. We define $$c(\prip, \priq) = \sum_{\priP \mid \priq} d(\priP \mid \prip) = \et - \gcd(e, \et)$$ and note that $i(P, K(\alpha) \mid M) = \sum_{\prip, \priq} c(\prip, \priq)$. 

\begin{lemma}
\label{lem:c}
Let $P$ be a finite place in $K(t)$. Then $\sum_{\prip \mid P} c(\prip, \priq) \geq e(\priq \mid P) - 1$ for all places $\priq$ over $P$.
\end{lemma}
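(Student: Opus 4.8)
The plan is to reduce Lemma \ref{lem:c} to an elementary inequality on ramification indices and then settle that inequality by a short case analysis. First I would fix the finite place $P$ and list the places $\prip_1,\dots,\prip_g$ of $M$ above it, with $e_j=e(\prip_j\mid P)$; since $K$ is algebraically closed and all relative degrees are $1$, the fundamental identity gives $\sum_j e_j=[M\colon K(t)]=p$. Before anything else I would record that no finite place is totally ramified in $M$ or in $N$: as $M=K(h_1(\alpha))$, a total ramification would mean $g_1-c=(x-a)^p$ (with $P$ corresponding to $t-c$), forcing $g_1'=0$ and hence $f'=(g_1'\circ h_1)\,h_1'=0$, against $f'\neq0$; the same argument applies to $N=K(h_2(\alpha))$. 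Consequently every $e_j<p$ — so in particular $g\ge 2$ — and $\et:=e(\priq\mid P)<p$.

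Next I would invoke the formula $c(\prip_j,\priq)=\et-\gcd(e_j,\et)$ established above, which is valid for every pair because \ref{lem:Dorey} guarantees $\gcd(e_j,\et)\ge 1$ common places of $K(\alpha)$ lying over $\prip_j$ and $\priq$. Summing over $j$ then turns the lemma into a purely arithmetic claim: for positive integers $e_1,\dots,e_g$ with $\sum_j e_j=p$ and all $e_j<p$, and an integer $\et$ with $1\le\et<p$, one has
\[
\sum_{j=1}^{g}\bigl(\et-\gcd(e_j,\et)\bigr)\ \ge\ \et-1 .
\]
This inequality is where the real work lies.

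I would prove it by counting $s=\#\{j\colon \et\mid e_j\}$, the indices contributing $0$ to the sum. Any index with $\et\nmid e_j$ has $\gcd(e_j,\et)$ a proper divisor of $\et$, hence at most $\et/2$, so its summand is at least $\et/2$; two or more such indices already push the sum past $\et$, settling that case. If, on the other hand, every $e_j$ were divisible by $\et$, then $\et\mid\sum_j e_j=p$ would force $\et\in\{1,p\}$, and both are harmless, since $\et=p$ is excluded and $\et=1$ makes the bound trivial. This leaves exactly one decisive configuration: a single index $j_0$ with $\et\nmid e_{j_0}$ and $\et\mid e_j$ for all $j\neq j_0$.

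The crux — and the one place where the primality of $p$ enters — is this last configuration. Here $\et$ divides every $e_j$ with $j\neq j_0$, so $e_{j_0}=p-\sum_{j\neq j_0}e_j\equiv p\pmod{\et}$, whence $\gcd(e_{j_0},\et)=\gcd(p,\et)=1$ because $p$ is prime and $\et<p$. The lone nonzero summand is then $\et-1$, and the inequality holds with equality. I expect this arithmetic step to be the main obstacle, as it is precisely what forces us to use both that $p$ is prime and that $f'\neq0$ rules out total ramification; the remaining cases are routine.
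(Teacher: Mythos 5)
Your proof is correct and follows essentially the same route as the paper's: the same reduction to $\sum_j(\et-\gcd(e_j,\et))\ge\et-1$ and the same split into the cases of zero, one, or at least two indices $j$ with $\et\nmid e_j$, with the bound $\gcd(e_j,\et)\le\et/2$ disposing of the last case. The only (cosmetic) difference is in the single-bad-index case, where you obtain $\gcd(e_{j_0},\et)=\gcd(p,\et)=1$ directly from $e_{j_0}\equiv p\pmod{\et}$, whereas the paper first shows $\gcd(e_0,\dots,e_l)=1$ and then observes that $\gcd(e_{j_0},\et)$ would divide every $e_j$.
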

\begin{proof}
Let $P =\prod_{i=0}^l \prip_i^{e_i}$ in $M$. For $d= \gcd(e_i \mid 0 \leq i \leq l)$ we have $d \mid \sum e_i = p$. If $d > 1$ then we would have $d=p$ and $P$ would be wildly ramified in $M$, which cannot be. Thus $d=1$.

Let $\priq$ be a place over $P$ with ramification index $\et = e(\priq \mid P)$. Then as above we have $c(\prip_i , \priq) =  \et - \gcd(e_i, \et)$. If $\et = 1$ we have $\sum_i c(\prip_i, \priq) = \sum_i (\et - \gcd(e_i, \et))  = 0 =  \et - 1$. Thus assume $\et > 1$. Then $\et$ cannot divide $e_i$ for all $i$, since their gcd is one. We distinguish two cases:

Case 1: $\et$ divides all but one places $\prip$ over $P$ in $M$. Then let $\prip_0$ be the place such that $\et \nmid e_0$. The gcd of $\et$ and $e_0$ divides $\et$ and thus divides all ramification indies of places over $P$ in $M$. But their gcd is one. Thus the gcd of $\et$ and $e_0$ is one and we have $\sum_{\prip \mid P} c(\prip, \priq) \geq c(\prip_0, \priq) = \et - 1$.

Case 2: There are at least two places, say  $\prip_1$ and $\prip_2$, over $P$ in $M$ such that $\et \nmid e_i$ for $i=1,2$. Then we have $\et / \gcd(e_i, \et) > 1 $ since else we would have  $\et \mid e_i$, which is a contradiction. Thus $\gcd(e_i, \et) \leq \et / 2$. Hence $\et - \gcd(e_i, \et) \geq \et/ 2$ and thus $\sum_i c(\prip_i , \priq) \geq c(\prip_1, \priq) + c(\prip_2, \priq) \geq \et > \et - 1$, as claimed. 
\end{proof}

\begin{corollary}
\label{cor:dorey2}
There is no finite place in $K(t)$ which is extra in $N$.
\end{corollary}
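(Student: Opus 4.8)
The plan is to combine the pointwise estimate of \ref{lem:c} with the tameness of all finite places, which was established above. First I would record that, since no finite place of $K(t)$ is wildly ramified in $K(\alpha)$, the same holds a fortiori in the intermediate field $N$: if $p \mid e(\priq \mid P)$ for some $\priq \mid P$, then $p \mid e(\priP \mid P)$ for any $\priP$ over $\priq$, contradicting the earlier claim. Combined with the fact that $K$ is algebraically closed (so all relative degrees are one), tameness gives $d(\priq \mid P) = e(\priq \mid P) - 1$ for every finite place $P$ and every $\priq \mid P$. Summing over the places of $N$ above $P$ yields
$$ i(P, N \mid K(t)) = \sum_{\priq \mid P} d(\priq \mid P) = \sum_{\priq \mid P} \bigl( e(\priq \mid P) - 1 \bigr). $$

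Next I would rewrite $i(P, K(\alpha) \mid M)$ by grouping the places $\priP$ of $K(\alpha)$ above $P$ according to the pair $(\prip, \priq)$ of places they induce in $M$ and $N$. Since $\priP \cap M = \prip$ is exactly the place appearing in the definition of $c(\prip, \priq)$, this regrouping gives
$$ i(P, K(\alpha) \mid M) = \sum_{\priP \mid P} d(\priP \mid \priP \cap M) = \sum_{\priq \mid P} \sum_{\prip \mid P} c(\prip, \priq). $$
For each fixed $\priq$, the inner sum is bounded below by $e(\priq \mid P) - 1$ by \ref{lem:c}. Summing this lower bound over all $\priq \mid P$ then yields
$$ i(P, K(\alpha) \mid M) = \sum_{\priq \mid P} \sum_{\prip \mid P} c(\prip, \priq) \geq \sum_{\priq \mid P} \bigl( e(\priq \mid P) - 1 \bigr) = i(P, N \mid K(t)), $$
so $P$ is not extra in $N$. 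As this argument applies to every finite place $P$, the corollary follows.

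The bulk of the real work sits in \ref{lem:c}; once that inequality is in hand, the only genuine content here is the bookkeeping that converts differents into $e - 1$ via tameness and the correct regrouping of the places $\priP$ over $P$ by their traces in $M$ and $N$. The one point I would be careful about is that this regrouping relies precisely on the count from \ref{lem:Dorey}: for each pair $(\prip, \priq)$ there are $\gcd(e, \et)$ places $\priP$ lying over both, which is exactly what makes $c(\prip, \priq) = \et - \gcd(e, \et)$ and thus guarantees that the double sum reassembles $i(P, K(\alpha) \mid M)$ without over- or under-counting. I do not expect any genuine obstacle beyond verifying this indexing is consistent.
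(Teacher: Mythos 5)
Your argument is the same as the paper's: apply \ref{lem:c} to bound $\sum_{\prip \mid P} c(\prip, \priq)$ below by $e(\priq \mid P) - 1$ for each $\priq$, sum over $\priq$, and identify the result with $i(P, N \mid K(t))$ via tameness, the double-sum decomposition of $i(P, K(\alpha) \mid M)$ having already been established in the text preceding the corollary. Your additional remarks on tameness in $N$ and the counting from \ref{lem:Dorey} correctly make explicit what the paper leaves implicit; the proof is correct.
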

\begin{proof}
Let $P$ be a finite place. By \ref{lem:c} we have   $\sum_{\prip \mid P} c(\prip, \priq) \geq e(\priq \mid P) - 1$ for all $\priq$. But then $i(P, K(\alpha) \mid M) = \sum_{\prip, \priq} c(\prip, \priq) \geq \sum_\priq (e(\priq \mid P) - 1) = i(P, N\mid K(t))$ which shows that $P$ is not extra in $N$.
\end{proof}

As seen in \ref{sec_prel} we have by the Hurwitz Genus Formula $$\sum_P i(P, N \mid K(t) ) =  \sum_{\priq \mid P} d(\priq \mid P) = 2p - 2 = \sum_P i(P, K(\alpha) \mid M).$$ But since there are no extra places in $N$ we get $i(P, N \mid K(t) )= i(P, K(\alpha) \mid M)$ for all places $P$. 

\begin{lemma}
\label{lem_zan}
Let $P$ be a finite place in $K(t)$. Then the following statements hold:
\begin{enumerate}
\item For each ramified place $\priq$ over $P$ in $N$ the ramification index $e(\priq \mid P)$ divides $e(\prip \mid P)$ for all but one place $\prip$ over $P$ in $M$. 
\item For each ramified place $\prip$ over $P$ in $M$ the ramification index $e(\prip \mid P)$ divides $e(\priq \mid P)$ for all but one place $\priq$ over $P$ in $N$. 
\item $P$ is ramified in $M$ if and only if it is ramified in $N$.
\end{enumerate}
\end{lemma}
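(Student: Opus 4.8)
The engine of the proof is the equality $i(P, N \mid K(t)) = i(P, K(\alpha) \mid M)$ established above for every place $P$, combined with the fact that every finite place is tamely ramified. The plan is to push this equality back into the proof of \ref{lem:c} and read off exactly which configurations of ramification indices are compatible with it. Fix a finite place $P$. Tameness gives $d(\priq \mid P) = e(\priq \mid P) - 1$ for each $\priq \mid P$, so that $i(P, N \mid K(t)) = \sum_{\priq \mid P} (e(\priq \mid P) - 1)$, whereas by definition $i(P, K(\alpha) \mid M) = \sum_{\priq \mid P} \left( \sum_{\prip \mid P} c(\prip, \priq) \right)$. Since \ref{lem:c} bounds every inner sum by $\sum_{\prip \mid P} c(\prip, \priq) \geq e(\priq \mid P) - 1$, the equality of the two totals forces equality term by term:
$$\sum_{\prip \mid P} c(\prip, \priq) = e(\priq \mid P) - 1 \quad \mbox{for all } \priq \mid P .$$

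First I would deduce statement 1 by inspecting the case distinction in the proof of \ref{lem:c}. For a ramified place $\priq$, so that $\et := e(\priq \mid P) > 1$, that proof attains equality precisely in its first case, where $\et$ divides $e(\prip \mid P)$ for all but one place $\prip$; its second case, with two or more exceptional $\prip$, yields the strict bound $\sum_{\prip} c(\prip, \priq) \geq \et > \et - 1$. The termwise equality just derived therefore excludes the second case for every ramified $\priq$, and the surviving first case is exactly the content of statement 1. (Since the gcd of the indices $e(\prip \mid P)$ equals $1$, as shown in the proof of \ref{lem:c}, $\et > 1$ cannot divide all of them, so there is indeed exactly one exception.)

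Statement 2 I would obtain by symmetry. Interchanging $M$ and $N$ throughout, the analogue of \ref{cor:dorey2} shows that no finite place is extra in $M$, and $\Pinf$ is not extra in $M$ because Proposition 6.7 in \cite{gatgie10}, applied to the two decompositions in the opposite order, gives $\deg(h_2') = \deg(g_1')$. Hence $i(P, M \mid K(t)) = i(P, K(\alpha) \mid N)$ for all $P$, and the argument for statement 1 run with the roles of $M$ and $N$ exchanged produces statement 2.

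Finally, statement 3 follows from statement 2 (its converse from statement 1). Suppose $P$ is ramified in $M$ and choose a ramified place $\prip \mid P$, so $e(\prip \mid P) > 1$. By statement 2 the index $e(\prip \mid P)$ divides $e(\priq \mid P)$ for all but one place $\priq \mid P$. If there are at least two places over $P$ in $N$, then some $e(\priq \mid P)$ is a multiple of $e(\prip \mid P) > 1$ and $P$ is ramified in $N$; if there is a single place over $P$ in $N$, it carries ramification index $p$ and $P$ is ramified in $N$ as well. The reverse implication is symmetric. I expect the only genuine difficulty to lie in the first paragraph: correctly turning the equality of different exponents into the termwise equality and matching that against the two cases of \ref{lem:c}. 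Once that identification is made, statements 1, 2, and 3 are bookkeeping and symmetry.
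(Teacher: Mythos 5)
Your proposal is correct and follows essentially the same route as the paper: the equality $i(P, N \mid K(t)) = i(P, K(\alpha)\mid M)$ together with the per-place bound from \ref{lem:c} rules out Case 2 of that proof (the paper phrases this with an indicator $\varepsilon(\priq)$ forced to vanish, you phrase it as termwise equality, which is the same argument), statement 2 follows by interchanging $M$ and $N$, and statement 3 is read off from the divisibility. Your explicit verification that the symmetric equality $i(P, M \mid K(t)) = i(P, K(\alpha)\mid N)$ really is available, and your handling of the case of a single (totally ramified) place over $P$ in statement 3, are slightly more careful than the paper's terse treatment.
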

\begin{proof}
To prove  (i), we claim that the second case in the proof of \ref{lem:c} does not occur. For $\priq$ falling into Case 2 we had seen that $\sum_{\prip \mid P} c(\prip, \priq) \geq e(\priq \mid P)$. Set $\varepsilon(\priq) = 1$ if $\priq$ falls into this case and $\varepsilon(\priq) = 0$ else. Then we have in any case $\sum_{\prip \mid P} c(\prip, \priq) \geq e(\priq \mid P) - 1 + \varepsilon(\priq)$. Hence we get $i(P, K(\alpha) \mid M) = \sum_{\prip, \priq} c(\prip, \priq) \geq \sum_{\priq} e(\priq \mid P) - 1 + \varepsilon(\priq) = i(P, N \mid K(t) ) + \sum_\priq \varepsilon(\priq)$. But since $i(P, N \mid K(t) )= i(P, K(\alpha) \mid M)$ we have $\sum_\priq \varepsilon(\priq) = 0$. This proves the claim.

The second statement can be proven analogously to the first one, by interchanging the role of $M$ and $N$ in the previous results.

Finally, if $P$ is ramified in $N$ then by (i) there is a place $\priq$ with $1 < e(\priq \mid P) \mid e(\prip \mid P)$ for some place $\prip$ in $M$. Thus $P$ is ramified in $M$. The other direction follows in the same way from (ii).
\end{proof}

\begin{lemma} \label{lem:oneram}
There is at most one finite place in $K(t)$ that is ramified in $M$. Moreover if there is a place that is ramified in $M$ then it has at most one unramified factor.
\end{lemma}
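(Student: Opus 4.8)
The statement concerns ramification of the two intermediate fields $M$ and $N$ in the tower $K(t) \subseteq M, N \subseteq K(\alpha)$, where $f$ has degree $p^2$ and both $M\mid K(t)$ and $K(\alpha)\mid M$ have degree $p$. The plan is to exploit the tight arithmetic accounting established just above: by \ref{lem:finiteRam}, since $f$ is normal of degree $p^2$, the total finite different in $M\mid K(t)$ equals $\deg(g_1')$, and since there is no finite wildly ramified place, every finite place $P$ is tamely ramified, so $\sum_{\prip\mid P} d(\prip\mid P) = \sum_{\prip\mid P}(e(\prip\mid P)-1) = p-\rho_P$, where $\rho_P$ is the number of places of $M$ over $P$. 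I would first translate "$P$ ramified in $M$" into a statement about the partition of $p$ into ramification indices $\{e(\prip\mid P)\}$; since $\sum e(\prip\mid P)=p$ is prime, a place is ramified precisely when this partition is nontrivial.

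**Counting the total ramification.** The key bound should come from comparing the total finite different with a fixed budget. From \ref{lem:finiteRam} applied to $M\mid K(t)$, I expect $\sum_{P\text{ finite}} (p - \rho_P) = \deg(g_1') \le p-2$, because $g_1$ is a normal polynomial of degree $p$ with $g_1'\neq 0$, so $\deg(g_1')\le p-2$ (the leading term contributes a zero derivative coefficient and the constant term is already killed by normality). Each finite ramified place contributes at least $1$ to $\sum_P (p-\rho_P)$, but more importantly a single ramified place with a totally ramified factor of index $p$ contributes the full $p-1$. I would argue that the contribution of any one ramified finite place is at least $p-1$ minus the count of its unramified factors, and then show the budget $p-2$ (strictly less than $p-1$) forces at most one ramified finite place, and moreover constrains its splitting type. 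Concretely, if a finite place $P$ is ramified, its different contribution is $p-\rho_P \ge p-1 - (\text{number of unramified factors of }P)$; combining with the budget $\le p-2$ immediately caps the number of unramified factors and forces uniqueness of the ramified place.

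**The two assertions.** For the first assertion (at most one finite ramified place in $M$), I would suppose two distinct finite places $P_1, P_2$ are both ramified in $M$. Each is also ramified in $N$ by \ref{lem_zan}(iii), and each ramified place forces $p-\rho_{P_i}\ge 1$; a careful count using that a nontrivial partition of the prime $p$ into at least two parts has $p-\rho\ge ?$ should show each ramified place eats at least, say, $\lceil (p-1)/2\rceil$ or more from the budget, whence two of them overshoot $p-2$. The cleanest route is probably to show directly that a single ramified finite place already consumes all but at most one unit of the budget, leaving no room for a second. For the second assertion (the unique ramified place has at most one unramified factor), I would use that $\sum_{\prip\mid P}(e(\prip\mid P)-1)=p-\rho_P$ equals essentially the whole budget $\deg(g_1')$; since the budget is at most $p-2$ and a totally ramified single factor already gives $p-1$, the splitting must include exactly enough parts that the number of unramified (index-one) factors is at most one.

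**Main obstacle.** The delicate point will be pinning down the exact inequality relating the number of unramified factors of a ramified place to its different contribution, in a way that is sharp enough to yield "at most one." The partition of the prime $p$ is the crux: I must rule out the possibility that a ramified place has several factors of index one alongside a few ramified factors, which would lower the per-place different cost and potentially allow a second ramified place. The hard part is therefore the combinatorial bookkeeping on partitions of $p$ under the total-different budget $\deg(g_1')\le p-2$, combined with the symmetric constraint from \ref{lem_zan} tying the ramification types in $M$ and $N$ together. I expect the proof to proceed by assuming the contrary in each assertion and deriving a violation of $\sum_{P\text{ finite}}(p-\rho_P)=\deg(g_1')\le p-2$.
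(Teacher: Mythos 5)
There is a genuine gap. Your plan rests on counting the finite different of $M \mid K(t)$ against the budget $\sum_{P\ \mathrm{finite}}(p-\rho_P)=\deg(g_1')\le p-2$, but that budget alone cannot yield either assertion, because a ramified place can be arbitrarily cheap: the partition $p = 2+2+1+\cdots+1$ has different contribution $2$, fits comfortably in the budget, and has $p-4$ unramified factors. Your concrete inequality $p-\rho_P \ge p-1-(\text{number of unramified factors})$ is equivalent to saying $P$ has at most one \emph{ramified} factor, which is false in this example; and even where it holds, combining it with $p-\rho_P\le p-2$ gives a \emph{lower} bound on the number of unramified factors ($\ge 1$), not the cap you need. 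So the combinatorial bookkeeping on partitions of $p$ that you identify as the crux genuinely cannot be closed within $M\mid K(t)$.

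The missing idea is the interplay with $N$ via the ``no extra places'' equality $i(P, K(\alpha)\mid M)=i(P, N\mid K(t))$, which the paper establishes from \ref{cor:dorey2} and the Hurwitz count. The paper proves the \emph{second} assertion first: if $\prip$ is an unramified factor of $P$ in $M$, then $\sum_{\priP\mid\prip} d(\priP\mid\prip)=\sum_{\priq}(\,e(\priq\mid P)-\gcd(1,e(\priq\mid P))\,)=i(P,N\mid K(t))$, i.e.\ each unramified factor of $P$ in $M$ already absorbs the \emph{entire} quantity $i(P,N\mid K(t))$ when passing to $K(\alpha)$; two such factors would force $i(P, K(\alpha)\mid M)\ge 2\,i(P,N\mid K(t))$, hence $i(P,N\mid K(t))=0$, hence $P$ unramified in $N$ and (by \ref{lem_zan}) in $M$, a contradiction. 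Only \emph{then} does the budget argument you envisage work: at most one part of the partition equals $1$, so $1+2(\rho_P-1)\le p$, giving $p-\rho_P\ge(p-1)/2$ per ramified place, and two ramified places would exceed $\deg(g_1')<p-1$. You gesture at the symmetric constraint from \ref{lem_zan} at the end, but without the explicit comparison of $i(P,K(\alpha)\mid M)$ with $i(P,N\mid K(t))$ the argument does not close.
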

\begin{proof}
Let $P$ be a finite place in $K(t)$, which is ramified in $M$. Assume there is a place $\prip$ such that  $e(\prip \mid P ) = 1 $. Then $\sum _{\priP \mid \prip} d(\priP \mid \prip) = \sum_\priq c(\prip, \priq) \geq  \sum_\priq e(\priq \mid P) - \gcd(1, e(\priq \mid P)) = \sum_\priq (e(\priq \mid P) - 1) = i(P, N \mid K(t) )$. If there are two unramified places $\prip_1$ and $\prip_2$ then $i(P, K(\alpha) \mid M) \geq \sum _{\priP \mid \prip_1} d(\priP \mid \prip_1)  + \sum _{\priP \mid \prip_2} d(\priP \mid \prip_2)  \geq 2 i(P, N \mid K(t) )$. But since $i(P, K(\alpha) \mid M) = i(P, N \mid K(t) )$ this can only be if $i(P, N \mid K(t) )=0$. Hence $P$ is unramified in $N$ and by \ref{lem_zan} unramified in $M$, in contradiction to our assumption. Thus there can be at most one unramified place over $P$. If $\rho$ denotes the number of places in $M$ over $P$ we have $1+ 2(\rho-1) \leq \sum e(\prip \mid P) = p$ and thus $\rho \leq (p+1)/2$. Therefore $i(P, M \mid K(t) ) = p - \rho \geq (p-1)/2$. But since $\sum_{P \textrm{ finite}} i(P, M \mid K(t)) = \deg(g_1') < p-1$ there can be at most one such a place in $K(t)$.
\end{proof}

Setting $k = \deg_2(g_1)$, we have $k = \deg_2(g_i) = \deg_2(h_i)$ for $i = 1, 2$ (see Proposition 6.7 in \cite{gatgie10}). We have a special case when $k = 1$. Then $\sum_{P \textrm{ finite}} i(P , M \mid K(t)) = \deg(g_1') = k -1=0$ and thus there are no ramified places. But since the second degree of the decompositions equals one, we have in this case that $g_1$ and $h_1$ are of the form $x^p - ax$ and $x^p - bx$, respectively. Thus $f$ is an additive polynomial and we are in the Case (\bare\ref{thm:gen:conj}) of \ref{thm:gen}.

We assume $ k > 1$. Then we have $\sum_{P \textrm{ finite}} i(P , M \mid K(t)) = k -1 > 0$ and thus there is a finite place $P$ which is ramified in $M$. By \ref{lem:oneram} we have $P$ is the only finite place which is ramified in $M$ and thus $i(P , M \mid K(t)) = k -1$. By \ref{lem_zan} and by interchanging $M$ and $N$ in \ref{lem:oneram} we get that $P$ is the only place that is ramified in $N$. Then we have $i(P , M \mid K(t)) =  p - \rho_1 = k-1 = p - \rho_2 = i(P , N \mid K(t))$, where $\rho_1$ and $\rho_2$ are the numbers of places over $P$ in $M$ and $N$, respectively. Thus $\rho_1 = \rho_2 = p - k + 1=\ell+1$ with $\ell = p-k$. 

By the correspondence between ramification and multiplicities we get that there is exactly one $c$ in $K$ such that $f-c$ has multiple roots. Then for each automorphism $\sigma$ of $K$ that fixes $\ff$ we have $f - \sigma (c) = \sigma(f - c)$ has multiple roots and thus $\sigma(c) = c$. This proves that $c$ is in $\ff$. We will later see that there is also a root $w \in \ff$ of $f- c$, that is $f(w) = c$. Thus the  conjugate $(x - f(w)) \circ f \circ (x + w)$ lies in $\ff[x]$ and has multiple roots. 

Now assume that there is an unramified place $\priq_0$ over $P$ in $N$. We will prove that $f$ falls into Case (\bare\ref{thm:gen:conj}) of \ref{thm:gen}. 

Let $\priq_i$ be the ramified places over $P$ in $N$ with ramification indices $\et_i$, for $0 \leq i \leq \ell$. Then let $e_0$ denote the ramification index in $M$ that is not divided by $\et_1$. If $e_0 \neq 1$ then $e_0$ cannot divide $\et_0 = 1$ and thus it must divide $\et_1$. But this would imply that $e_0$ divides all ramification indices in $M$, which cannot be. Thus we get $e_0 = 1$. Then $\et_i$ divides all ramification indices in $M$ that are greater than 1, and the other way round. Thus all of this ramification indices equal, say we have $m = e_i = \et_i$ for all $1 \leq i \leq \ell$. 

Then we get that $g_1 - c$ is of the form $(x - \tilde{a})\tilde{g}^m$, where $\tilde{a}$ is in $K$ and $\tilde{g}$ is a squarefree polynomial of degree $\ell$. We have that $g_1 - c$ is a polynomial over $\ff$ and the irreducible factors of $g_1 -c$ over $\ff$ have only simple roots. Thus $\tilde{g}$ is defined over $\ff$ and $\tilde{a}$ is in $\ff$.

Since $e(\priP \mid P) = \lcm(e(\prip \mid P), e(\priq \mid P) ) = m$ or $=1$ we get $e(\priP \mid \prip) = m$ if and only if $e(\prip \mid P)=1$ and $e(\priq \mid P) = m$. Thus only $\prip_0$ is ramified in $K(\alpha) \mid M$ and has the same ramification like $P$ in $N \mid K(t)$. Thus as above $h_1 - \tilde{a}$ is of the form $(x - w)\tilde{h}^m$, where $w \in \ff$ and $\tilde{h}$ is a squarefree polynomial of degree $\ell$. Now we conjugate as follows: $(x - c) \circ f \circ (x + w) = (x - c) \circ g_1 \circ (x + \tilde{a}) \circ (x - \tilde{a}) \circ h_1 \circ (x + w) = x\hat{g}^m \circ x\hat{h}^m$, with $\hat{g} = \tilde{g} \circ (x + \tilde{a})$ and $\hat{h} = \tilde{h} \circ (x + w)$.

To see that we are in Case (\bare\ref{thm:gen:conj}) of \ref{thm:gen} we have to prove that $\hat{g}$ and $\hat{h}$ are of the appropriate form. For a polynomial $g = x\hat{g}^m$ the derivative of $g$ is $g' = \hat{g}^{m-1} (\hat{g} + m x\hat{g}' )$. If $a_i$ are the coefficients of $\hat{g}$ then we have $\hat{g} + m x\hat{g}'  = \sum_i (a_i + m i a_i)x^i$. Let $\ell'$ be the degree of  $\hat{g} + m x\hat{g}' $. Then $k-1 = \deg(g') = (m-1)\ell + \ell' = m\ell - \ell + \ell' = p - \ell - 1 + \ell' = k-1 + \ell'$ and thus $\ell'=0$. But this means that we have $a_i + m i a_i = 0$ for all $1 \leq i \leq \ell$. For $ 1 \leq i < \ell$ this is the case only if $a_i = 0$. Thus we get $\hat{g} = (x^\ell - a)$ and $g$ is of the form as claimed.

Now we consider the other case, where there is no unramified place over $P$ in $N$. Then there is neither one in $M$. To prove that we are actually in Case (\bare\ref{thm:gen:neu}), we first prove that $\ell=1$. For this propose we translate \ref{lem_zan} into the language of graphs. Let  $V = A \cup B$ be the set of vertices, where $A = \{e_i \colon 0\leq i\leq \ell\}$ and $B = \{\et_i \colon 0\leq i\leq \ell\}$. Let the set of edges $E$ consist of $(e_i, \et_j)$ if $e_i | \et_j$ and of $(\et_i, e_j)$ if $\et_i | e_j$. Then this yields a directed bipartite graph with outdegree $\delta (v) = \ell$ for each $v \in V$. Note that if there is a vertex in $A$ which is connected to all other vertices in $A$ then we get that the gcd of all $e_i$ is greater then one, which is a contradiction.

\begin{lemma}
Let $G=(V ,E)$ be a directed bipartite graph, with bipartition $V = A \cup B$. Assume $A$ and $B$ have the same cardinality $\ell+1 > 2$ and the outdegree of each vertex equals $\ell$. Then there is $a \in A$ such that $a$ is connected to all vertices in $A$ or there is $b \in B$ such that $b$ is connected to all vertices in $B$.
\end{lemma}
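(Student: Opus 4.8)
The plan is to prove the stronger statement that the first alternative always holds, i.e.\ there is always some $a \in A$ connected to all vertices in $A$; this trivially implies the stated disjunction. Since each vertex has outdegree $\ell$ while the opposite part has $\ell+1$ vertices, every vertex fails to point to exactly one vertex of the other part. So I would first record the two ``miss'' maps $\mu \colon A \to B$ and $\nu \colon B \to A$, where $\mu(a)$ is the unique vertex of $B$ that $a$ does \emph{not} point to, and $\nu(b)$ the unique vertex of $A$ that $b$ does not point to.

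Next I would analyse reachability inside $A$ by length-two paths. A vertex $a'$ is reachable from $a$ by a path $a \to b \to a'$ precisely when there is some $b \in B$ with $b \neq \mu(a)$ (so the edge $a \to b$ exists) and $\nu(b) \neq a'$ (so the edge $b \to a'$ exists). Hence $a'$ fails to be reachable from $a$ in two steps exactly when $\nu(b) = a'$ for every $b$ in the $\ell$-element set $B \setminus \{\mu(a)\}$; in other words, $\nu$ is constant with value $a'$ on an $\ell$-subset of $B$.

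The key step is then a pigeonhole argument. Suppose two distinct vertices $a_1', a_2' \in A$ were each unreachable-in-two-steps from some source. Then $\nu$ would be constantly $a_1'$ on one $\ell$-subset of $B$ and constantly $a_2'$ on another. Since $|B| = \ell+1$ and $\ell \geq 2$, any two $\ell$-subsets of $B$ meet (as $2\ell > \ell+1$), so some $b$ would satisfy $\nu(b) = a_1' = a_2'$, a contradiction. Therefore at most one vertex $a^* \in A$ can be unreachable-in-two-steps from some source.

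I would finish by choosing this exceptional $a^*$ (or any vertex of $A$ if none exists) and checking it is connected to all vertices in $A$: it reaches itself by the trivial path, and for every $a' \neq a^*$ the previous step shows $a'$ is reachable in two steps from \emph{every} vertex, in particular from $a^*$. Hence $a^*$ is connected to all vertices in $A$, giving the first alternative. The main obstacle is spotting that non-reachability forces the miss-map $\nu$ to be constant on a large set, after which the intersection property of $\ell$-subsets of an $(\ell+1)$-set (where the hypothesis $\ell+1 > 2$ is exactly what is needed) does the work; a minor subtlety is that one must single out the exceptional target itself as the dominating vertex, since it reaches itself trivially whereas a generic source need not reach it.
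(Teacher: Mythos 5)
Your proof is correct, and it takes a genuinely different route from the paper's. The paper argues by contradiction on the first alternative: it fixes $a_0,a_1\in A$ with no $a_1$-$a_0$ path, lets $b_0$ be the unique vertex of $B$ not hit by $a_1$, and then, by a case analysis on whether $(b_0,a_1)$ is an edge, shows that $G\setminus\{a_0,b_0\}$ is complete bipartite and that $b_0$ dominates $B$; this only establishes the stated disjunction. You instead package the hypothesis into the two ``miss maps'' $\mu\colon A\to B$ and $\nu\colon B\to A$, observe that failure of two-step reachability of $a'$ from $a$ forces $\nu$ to be constantly $a'$ on the $\ell$-set $B\setminus\{\mu(a)\}$, and use that two $\ell$-subsets of an $(\ell+1)$-set meet when $\ell\ge 2$ to conclude that at most one target $a^*$ can be exceptional; that $a^*$ then dominates $A$. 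This is shorter, proves the stronger fact that the first alternative always holds (hence by symmetry so does the second), and shows that domination is already witnessed by paths of length two. One caveat, which you yourself flag: the exceptional $a^*$ need not reach itself by a nontrivial path (it can have in-degree zero, e.g.\ when $\nu$ is constant on all of $B$), so your stronger claim is true only under the convention that a vertex counts as connected to itself. This is harmless here: the paper uses the same convention implicitly, since its own proof only produces paths from $b_0$ to the vertices of $B\setminus\{b_0\}$, and the lemma is applied in the text only in the form ``connected to all \emph{other} vertices.''
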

\begin{proof}
Assume that there are $a_0$ and $a_1$ in $A$ such that there is no $a_1$-$a_0$ path (if such $a_0$ and $a_1$ would not exist we would be done). Since the outdegree of $a_1$ is $\ell$ there is $b_0$ in $B$ such that $(a_1, b_0) \not\in E$. Then for all $b$ in $B' = B \setminus \{b_0\}$ there is a edge $(a_1, b)$ and thus no edge $(b, a_0)$. If now $(b_0, a_1)$ is in $G$ then $b_0$ is connected to all $b$ in $B'$ via $a_1$ and we would be done. Thus we assume that $(b_0, a_1)$ is not in $G$. 

We claim that $G' = G \setminus \{a_0, b_0\}$ is a complete bipartite graph. We have already seen that for each $b \in B'$ the edge $(b, a_0)$ is not in $G$. Thus $b$ has outdegree $\ell$ in $G'$. Let $a$ be in $A' = A\setminus \{a_0\}$. Then $a$ has outdegree $\ell$ in $G$ and it would have less outdegree in $G'$ only if there would be $(a, b_0)$ in $G$. Then for $b$ in $B'$ we get that $(a_1, b), (b, a), (a, b_0), (b_0, a_0)$ is a $a_1$-$a_0$ path, which is a contradiction to our assumption. Thus $G'$ is complete. 

Since the outdegree of $a_0$ is $\ell > 1$ there is a $b_1$ in $B'$ such that $(a_0, b_1)$ is in $G$. Let $b$ be any vertex in $B'$. Since $G'$ is complete there is a $b_1$-$b$ path $p$. But then $(b_0, a_0), (a_0, b_1), p$ is a $b_0$-$b$ path and thus $b_0$ is connected to all vertices in $B$.
\end{proof}

By the previous discussion we know that $\ell = 1$ and therefore $P$ splits exactly into two ramified places in both fields, $M$ and $N$, say  $P = \prip_0^m \prip_1^{p-m}$ in $M$ and $P = \priq_0^m\priq_1^{p-m}$ in $N$, with $1 < m < p-1$. But this means that $g_1 - c$ is of the form $(x - a)^m (x- \tilde{a})^{p - m}$, for suitable $a, \tilde{a} \in \ff$.

Now there are $\gcd(m,m)=m$ places $\priP$ over $\prip_0$ and $\priq_0$. For such a place we have $e(\priP \mid \prip_0) = \lcm (m,m) / m = 1$. Furthermore there is one place $\priP$ over $\prip_0$ and $\priq_1$ with $e(\priP \mid \prip_0) = \lcm (m,p-m) / m = p-m$. Thus $h_1-a$ is of the form $(x-b)^{p-m} q$, where $q$ is a squarefree polynomial of degree $m$. Similarly  we get $h_1 - \tilde{a} = (x - \tilde{b})^m\tilde{q}$. It is left to prove that $a$, $\tilde{a}$, $b$, and $\tilde{b}$ are in $\ff$. Assume $a$ and $\tilde{a}$ would not be in $\ff$. Then $(x - a)(x-\tilde{a})$ must be an irreducible factor of $g_1 - c$. But then $m = p-m$ which is a contradiction (note that for $p = 2$ there is anyway no such $m$). Similarly we get that $b$ and $\tilde{b}$ are in $\ff$. By conjugating $f$ with $x + w$ for $w \in \{b, \tilde{b} \}$, we achieve the desired form.

Finally we note that the case, in which the polynomial falls, depends on whether there is an unramified place over $P$ or not. Thus the two cases are distinct. This finishes the proof of \ref{thm:gen}.

\subsection{Parametrization}
\label{subsec:conj}
The polynomials examined in Section 6 in \cite{gatgie10} fall into Case (\bare\ref{thm:gen:conj}) of \ref{thm:gen}.
\begin{theorem}
\label{ex:conj}
For parameters $\varepsilon \in \{ 0, 1 \}$, $u$, $s \in \mathbb{F}^\times$ and $\ell$ a positive divisor of $p - 1$ let $f$ be the polynomial 
$$f=x(x^{\ell(p+1)} - \varepsilon u s^r x^\ell + u s^{p+1})^m,$$
with $m = (p-1)/\ell$. 
Then $f$ has for each root $t$ of $x^{p+1} - \varepsilon u x + u$ in $\ff$ a decomposition $(g, h)$ with $g = x(x^\ell - u s^p t^{-1})^m$ and $h= x (x^\ell - s t)^m$. These decompositions are pairwise distinct and there are no other possible decompositions of $f$.
\end{theorem}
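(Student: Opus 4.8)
The plan is to separate the two assertions: that each root $t$ of $x^{p+1} - \varepsilon u x + u$ yields the stated decomposition, and that no others exist. The forward direction is a direct computation, while the completeness claim is the substantial part.

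\textbf{Forward direction.} First I would verify $f = g \circ h$ by hand. Writing $h = x(x^\ell - st)^m$ and using $m\ell = p-1$, the characteristic-$p$ identity $(x^\ell - st)^{p-1} = (x^{\ell p} - s^p t^p)/(x^\ell - st)$ gives a closed form for $h^\ell$, hence for $h^\ell - us^p t^{-1}$. After clearing the denominator $x^\ell - st$, the middle coefficient of the numerator is $s^p t^p + u s^p t^{-1} = s^p(t^p + u t^{-1})$, which collapses to $\varepsilon u s^p$ precisely because $t$ satisfies $t^{p+1} - \varepsilon u t + u = 0$, equivalently $t^p + u t^{-1} = \varepsilon u$. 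Substituting into $g = y(y^\ell - u s^p t^{-1})^m$ at $y = h$ then reproduces $f$ once the factor $(x^\ell - st)^m$ cancels. This also records that $f$ is normal of degree $p^2$.

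\textbf{Distinctness and $f' \neq 0$.} Since $g$ is determined by $f$ and $h$, two decompositions agree iff their right components do; as $s \neq 0$ the assignment $t \mapsto h = x(x^\ell - st)^m$ is injective, so distinct roots give distinct decompositions. A short computation using $m\ell \equiv -1 \pmod p$ shows $f' = u s^{p+1} Q^{m-1}$, where $Q = x^{\ell(p+1)} - \varepsilon u s^p x^\ell + u s^{p+1}$; in particular $f' \neq 0$, so the correspondence of \ref{thm:bij} and the classification \ref{thm:gen} are available.

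\textbf{Completeness.} To rule out further decompositions I would pass to the algebraic closure $K$. The polynomial $x^{p+1} - \varepsilon u x + u$ is separable over $K$ (its gcd with its derivative $x^p - \varepsilon u$ is trivial for both $\varepsilon = 0$ and $\varepsilon = 1$), hence has $p+1$ distinct roots there; by the forward direction $f$ then has at least two normal decompositions over $K$, so \ref{thm:gen} applies with $\ff$ replaced by $K$. Because these decompositions already exhibit $f$ itself — the conjugate by $w = 0$, since $f(0) = 0$ — in the shape of Case \ref{thm:gen:conj}, the uniqueness of $w$ and of the exponent in that case forces $w = 0$ and matches the exponent with our $m = (p-1)/\ell$. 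Consequently every normal decomposition of $f$ over $K$ has the form $g = x(x^\ell - a)^m$, $h = x(x^\ell - b)^m$. Composing these and comparing the inner monic degree-$\ell(p+1)$ polynomials gives $a + b^p = \varepsilon u s^p$ and $ab = u s^{p+1}$; eliminating $a$ yields $b^{p+1} - \varepsilon u s^p b + u s^{p+1} = 0$, that is $b = st$ for a root $t$ of $x^{p+1} - \varepsilon u x + u$, with $a = u s^p t^{-1}$. This produces exactly $p+1$ decompositions over $K$.

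\textbf{Descent and the obstacle.} Finally I would descend to $\ff$: since $s, u \in \ff^\times$, one has $b = st \in \ff$ iff $t \in \ff$, and then $a = u s^p t^{-1} \in \ff$ as well, so the decompositions defined over $\ff$ are exactly those attached to the roots $t$ of $x^{p+1} - \varepsilon u x + u$ in $\ff$. I expect the main obstacle to be this completeness step: forcing the rigid form $x(x^\ell - a)^m$ on \emph{every} decomposition requires invoking \ref{thm:gen} over $K$ (where at least two decompositions are guaranteed, unlike possibly over $\ff$) and checking carefully that $f$ lies in Case \ref{thm:gen:conj} with conjugation parameter $w = 0$ and the correct exponent $m$. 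The forward computation and the descent are routine once the relation $t^p + u t^{-1} = \varepsilon u$ is in hand.
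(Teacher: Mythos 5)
Your proposal is correct in substance, but it takes a genuinely different route from the paper: the paper's entire proof of \ref{ex:conj} is the citation ``See Theorem 6.2 in von zur Gathen, Giesbrecht \& Ziegler (2010)'', whereas you give a self-contained derivation. Your forward computation (using $(x^\ell-st)^{p-1}=(x^{\ell p}-s^pt^p)/(x^\ell-st)$ and $t^p+ut^{-1}=\varepsilon u$) is exactly right and implicitly corrects the typo $s^r$ for $s^p$ in the statement; the computation $b^p+a=\varepsilon u s^p$, $ab=us^{p+1}$, hence $b=st$ with $t^{p+1}-\varepsilon u t+u=0$, is precisely the calculation the paper performs in the \emph{converse} direction in the Corollary following \ref{ex:conj} (showing every Case~(\bare\ref{thm:gen:conj}) polynomial is so parametrized), so your completeness step essentially runs that Corollary backwards on top of \ref{thm:gen}. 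What the two approaches buy is different: the cited original proof is elementary and independent of the ramification machinery, so in the source it serves as a confirmation of the classification on this family; your route makes \ref{ex:conj} a corollary of \ref{thm:gen} (legitimately so --- the proof of \ref{thm:gen} in this paper nowhere uses \ref{ex:conj}, so there is no circularity), at the cost of importing the full weight of the classification and of the asserted uniqueness of $w$ and $m$ in Case~(\bare\ref{thm:gen:conj}). Two small points you should make explicit: first, ``no other decompositions'' must be read as ``no other normal decompositions'' (which is the right normalization by Section~\bare\ref{sec:dec}); second, when $m=1$ the polynomial is additive, $f'=us^{p+1}$ is a nonzero constant, and your identification of $c=0$ as ``the unique value where $f-c$ has multiple roots'' degenerates --- the argument still goes through because conjugation then fixes $f$ and Case~(\bare\ref{thm:gen:neu}) is excluded by its requirement $1<m<p-1$, but the sub-case deserves a sentence.
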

\begin{proof}
See Theorem 6.2 in \cite{gatgie10}.
\end{proof}

\begin{corollary}
All polynomials which fall into Case (\bare\ref{thm:gen:conj}) of \ref{thm:gen} can be parametrized as in \ref{ex:conj}.
\end{corollary}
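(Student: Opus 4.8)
The plan is to start from a single decomposition guaranteed by \ref{thm:gen}, compute the composition in closed form, and then read off parameters $\varepsilon, u, s$ together with a root $t$ that exhibit $f$ as a member of the family in \ref{ex:conj}. By Case (\ref{thm:gen:conj}) of \ref{thm:gen}, after replacing $f$ by its conjugate $(x - f(w)) \circ f \circ (x + w)$, which is again normal, there is a normal decomposition $(g,h)$ with $g = x(x^\ell - a)^m$ and $h = x(x^\ell - b)^m$ for some $a, b \in \ff^\times$ and $\ell = (p-1)/m$. Since the members of \ref{ex:conj} are themselves normal, it suffices to present this conjugate in the parametrized form.

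First I would compute $f = g \circ h = h\,(h^\ell - a)^m$ explicitly. Abbreviating $z = x^\ell$ and using $m\ell = p-1$, one has $h = x(z-b)^m$ and $h^\ell = z(z-b)^{p-1}$, so that $f = x\,[(z-b)(z(z-b)^{p-1} - a)]^m$. Expanding the inner factor with the identity $(z-b)^p = z^p - b^p$ collapses it to $z^{p+1} - (a+b^p)z + ab$, whence
$$f = x\,(x^{\ell(p+1)} - (a+b^p)\,x^\ell + ab)^m.$$

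It then remains to match this against $x\,(x^{\ell(p+1)} - \varepsilon u s^p x^\ell + u s^{p+1})^m$, that is, to solve $u s^{p+1} = ab$ and $\varepsilon u s^p = a + b^p$ with $\varepsilon \in \{0,1\}$ and $s \in \ff^\times$. A dichotomy on the linear coefficient settles this: if $a + b^p = 0$, take $\varepsilon = 0$, $s = 1$, $u = ab$; otherwise take $\varepsilon = 1$, $s = ab/(a+b^p)$, which is nonzero because $a, b \in \ff^\times$, and $u = ab/s^{p+1}$. Setting $t = b/s \in \ff$ and substituting $a = u s^p t^{-1}$ and $b = st$ into the relation $\varepsilon u s^p = a + b^p$, a short manipulation shows it to be equivalent to $t^{p+1} - \varepsilon u t + u = 0$. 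Thus $t$ is a root of the required polynomial and $(g,h)$ is precisely the decomposition attached to $t$ in \ref{ex:conj}.

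The composition step is routine, and I expect the only delicate point to be the selection of admissible parameters: one must guarantee $\varepsilon \in \{0,1\}$ and $s \neq 0$, which is exactly what the case distinction $a + b^p = 0$ versus $a + b^p \neq 0$ delivers. Having identified $f$ as a member of the family, \ref{ex:conj} accounts for all of its decompositions through the roots $t$, consistently with the hypothesis of \ref{thm:gen} that $f$ has at least two.
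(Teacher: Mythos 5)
Your argument is correct and coincides with the paper's own proof: the same explicit computation of $g\circ h = x(x^{\ell(p+1)} - (a+b^p)x^\ell + ab)^m$, the same dichotomy on $a+b^p$ to choose $\varepsilon$ and $s$, and the same verification that $t=b/s$ satisfies $t^{p+1}-\varepsilon u t + u = 0$. (Incidentally, your writing $\varepsilon u s^p$ for the linear coefficient matches what the computation actually yields; the exponent $r$ in the displayed formula of \ref{ex:conj} appears to be a typo for $p$.)
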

\begin{proof}
We have $g = x(x^\ell - a)^m$ and $h = x(x^\ell - b)^m$ for suitable $a$ and $b$ in $\ff^\times$. Then we get $f = x(x^\ell - b)^m((x(x^\ell-b)^m)^l - a)^m = x(x^{\ell(p+1)} - (b^p +a)x^\ell +ab)^m$. Now define $\varepsilon = 0$ if $b^p +a = 0$ else define $\varepsilon = 1$. In case $\varepsilon = 0$ we set $s=1$, $t=b$, and $u= ab$. If $\varepsilon =1$ we set $s= ab (b^p + a)^{-1}$, $t= b/s$, and $u = ab/ s^{p+1}$. In both cases we have that $u$, $s$, and $t$ are in $\ff$ and the equations $t^{p+1} -\varepsilon u t + u =0$, $b=st$, and $a = us^p t^{-1}$ hold as claimed.
\end{proof}

Note that if the field $\ff$ is large enough, the polynomial $x^{p+1} - \varepsilon u x + u$ has $p+1$ roots and thus $f$ has $p+1$ (minimal) decompositions. This is another example, that shows that the bounds in \ref{subsec:ub} are sharp. Still open is a closer examination of the polynomials falling into Case (\bare\ref{thm:gen:neu}).

\newpage

\bibliography{journals,refs,lncs,DAveroe_local}

\providecommand{\ymd}[3]{\csname @ifempty\endcsname{#1}{?#1/#2/#3?}{\csname
  @ifempty\endcsname{#2}{?#1/#2/#3?}{\csname
  @ifempty\endcsname{#3}{?#1/#2/#3?}{{\relax \day=#3\relax \month=#2\relax
  \year=#1\relax \number\day~\ifcase\month\or January\or February\or March\or
  April\or May\or June\or July\or August\or September\or October\or November\or
  December\fi \space\ifnum\year>0\relax \number\year \else \csname
  count@\endcsname1\relax \expandafter\advance\csname count@\endcsname-\year
  \expandafter\number\csname count@\endcsname~BC\fi}}}}}
  \providecommand{\hide}[1]{.} \providecommand{\Hide}[1]{\unskip}
  \providecommand{\gobble}[1]{} \providecommand{\todo}[1]{\textbf{`?`?`?#1???}}
  \providecommand{\Name}[1]{#1} \providecommand{\Textgreek}[1]{\textgreek{#1}}
  \providecommand{\at}{\char64\relax}
  \providecommand{\cyr}{\PackageError{cyrillic}{Package not loaded. Use
  \string\usepackage{cyrillic} to define \string\cyr\space appropriately}{}
  \gdef\cyr{\def\cprime{c'}{\bf ?cyr?}}\cyr}
  \makeatletter\protected@write\@auxout{}{\string
  \gdef\string\abbr{\string\csname\space
  @gobble\string\endcsname}}\gdef\abbr{}\makeatother
\begin{thebibliography}{18}
\providecommand{\natexlab}[1]{#1}
\providecommand{\url}[1]{\texttt{#1}}
\providecommand{\urlprefix}{URL }
\providecommand{\selectlanguage}[1]{\relax}

\bibitem[{Atkinson(1975)}]{atk75}
\textsc{M.~D. Atkinson} (1975).
\newblock An algorithm for finding the blocks of a permutation group.
\newblock \emph{Mathematics of Computation} \textbf{29}, 911--913.
\newblock ISSN 0378-4754.

\bibitem[{Bostan \emph{et~al.}(2004)Bostan, Lecerf, Salvy, Schost \&
  Wiebelt}]{boslec04}
\textsc{A.~Bostan}, \textsc{G.~Lecerf}, \textsc{B.~Salvy}, \textsc{{{\'E}}.
  Schost} \& \textsc{B.~Wiebelt} (2004).
\newblock Complexity issues in bivariate polynomial factorization.
\newblock In \emph{ISSAC 2004: Proceedings of the 2004 International Symposium
  on Symbolic and Algebraic Computation}, \textsc{Rudolf Fleischer} \&
  \textsc{Gerhard Trippen}, editors, 42--49. ACM.
\newblock \urlprefix\url{http://dx.doi.org/10.1145/1005285.1005294}.

\bibitem[{Butler(1992)}]{but92}
\textsc{Greg Butler} (1992).
\newblock An analysis of Atkinson's algorithm.
\newblock \emph{SIGSAM Bull.} \textbf{26}, 1--9.
\newblock ISSN 0163-5824.
\newblock \urlprefix\url{http://doi.acm.org/10.1145/130933.130935}.

\bibitem[{Dorey \& Whaples(1974)}]{dorwha74}
\textsc{F.~Dorey} \& \textsc{G.~Whaples} (1974).
\newblock Prime and Composite Polynomials.
\newblock \emph{Journal of Algebra} \textbf{28}, 88--101.
\newblock \urlprefix\url{http://dx.doi.org/10.1016/0021-8693(74)90023-4}.

\bibitem[{Fried \& MacRae(1969)}]{frimac69}
\textsc{Michael~D. Fried} \& \textsc{R.~E. MacRae} (1969).
\newblock On the invariance of chains of Fields.
\newblock \emph{Illinois Journal of Mathematics} \textbf{13}, 165--171.

\bibitem[{von~zur Gathen(1990{\natexlab{a}})}]{gat90c}
\textsc{Joachim von~zur Gathen} (1990{\natexlab{a}}).
\newblock Functional Decomposition of Polynomials: the Tame Case.
\newblock \emph{Journal of Symbolic Computation} \textbf{9}, 281--299.
\newblock \urlprefix\url{http://dx.doi.org/10.1016/S0747-7171(08)80014-4}.

\bibitem[{von~zur Gathen(1990{\natexlab{b}})}]{gat90d}
\textsc{Joachim von~zur Gathen} (1990{\natexlab{b}}).
\newblock Functional Decomposition of Polynomials: the Wild Case.
\newblock \emph{Journal of Symbolic Computation} \textbf{10}, 437--452.
\newblock \urlprefix\url{http://dx.doi.org/10.1016/S0747-7171(08)80054-5}.

\bibitem[{von~zur Gathen(2009)}]{gat09b}
\textsc{Joachim von~zur Gathen} (2009).
\newblock The Number of Decomposable Univariate Polynomials --- Extended
  Abstract.
\newblock 359--366.
\newblock ISBN 978-1-60558-609-0.
\newblock Preprint (2008) at {\tt{http://arxiv.org/abs/0901.0054}}.

\bibitem[{von~zur Gathen \& Gerhard(1999)}]{gatger99}
\textsc{Joachim von~zur Gathen} \& \textsc{J{\"{u}}rgen Gerhard} (1999).
\newblock \emph{Modern Computer Algebra}.
\newblock Cambridge University Press, Cambridge,~UK, {First} edition.
\newblock ISBN 0-521-64176-4.
\newblock \urlprefix\url{http://cosec.bit.uni-bonn.de/science/mca/}.
\newblock Other available editions: Second edition 2003, Chinese edition,
  Japanese translation.

\bibitem[{von~zur Gathen \emph{et~al.}(2010)von~zur Gathen, Giesbrecht \&
  Ziegler}]{gatgie10}
\textsc{Joachim von~zur Gathen}, \textsc{Mark Giesbrecht} \& \textsc{Konstantin
  Ziegler} (2010).
\newblock Composition collisions and projective polynomials. Statement of
  results.
\newblock Preprint available at \url{http://arxiv.org/abs/1005.1087}.

\bibitem[{Landau \& Miller(1985)}]{lanmil85}
\textsc{S.~Landau} \& \textsc{G.~L. Miller} (1985).
\newblock Solvability by Radicals is in Polynomial Time.
\newblock \emph{Journal of Computer and System Sciences} \textbf{30}, 179--208.

\bibitem[{Lecerf(2008)}]{lec08}
\textsc{G.~Lecerf} (2008).
\newblock Fast Separable Factorization and Applications.
\newblock \emph{Applicable Algebra in Engineering, Communication and Computing}
  \textbf{19}(2), 135--160.
\newblock
  \urlprefix\url{http://www.springerlink.com/content/75430606n6r52k67/fulltext%
.pdf}.

\bibitem[{Neukirch(2007)}]{neu99}
\textsc{J{\"u}rgen Neukirch} (2007).
\newblock \emph{Algebraische Zahlentheorie}.
\newblock Springer-Verlag, Berlin.
\newblock ISBN 3-540-37547-3.

\bibitem[{Ritt(1922)}]{rit22}
\textsc{J.~F. Ritt} (1922).
\newblock Prime and Composite Polynomials.
\newblock \emph{Transactions of the American Mathematical Society} \textbf{23},
  51--66.
\newblock \urlprefix\url{http://www.jstor.org/stable/1988911}.

\bibitem[{Stichtenoth(1993)}]{sti93}
\textsc{Henning Stichtenoth} (1993).
\newblock \emph{Algebraic Function Fields and Codes}.
\newblock Springer-Verlag, 260 pages .

\bibitem[{Wielandt(1964)}]{wie64}
\textsc{Helmut Wielandt} (1964).
\newblock \emph{Finite permutation groups}.
\newblock Academic Press, New York.
\newblock ISBN 0-127-49656-4.
\newblock Translated from the German by R. Bercov.

\bibitem[{Zannier(1993)}]{zan93}
\textsc{U{\hide{mberto}}~Zannier} (1993).
\newblock Ritt's Second Theorem in arbitrary characteristic.
\newblock \emph{Journal f{\"{u}}r die reine und angewandte Mathematik}
  \textbf{445}, 175--203.
\newblock
  \urlprefix\url{http://www.digizeitschriften.de/index.php?id=loader&tx_jkDigi%
Tools_pi1[IDDOC]=503382}.

\bibitem[{Zippel(1991)}]{zip91}
\textsc{Richard Zippel} (1991).
\newblock Rational Function Decomposition.
\newblock In \emph{Proceedings of the 1991 International Symposium on Symbolic
  and Algebraic Computation ISSAC~'91, {\rm Bonn, Germany}}, \textsc{Stephen~M.
  Watt}, editor, 1--6. ACM Press, Bonn, Germany.
\newblock ISBN 0-89791-437-6.

\end{thebibliography}

\end{document}